\documentclass[12pt,a4paper]{amsart}
\usepackage{fullpage,color}
%\linespread{1.5} 
\usepackage[mac]{inputenc}
\usepackage{amsmath,amsfonts,amssymb,amsthm,amscd}
\usepackage{graphicx}
\newtheorem{theorem}{Theorem}[section]

\newtheorem{lemma}[theorem]{Lemma}
\newtheorem{corollary}[theorem]{Corollary}

\newcommand{\C}{\mathbb C}

\newcommand{\Z}{\mathbb Z}

\DeclareMathOperator{\ord}{ord\ }

\usepackage{hyperref}

\title{On Mahler's transcendence measure for $e$}

\author{Anne-Maria Ernvall-Hyt\"onen}
\address{Matematik och Statistik, {\AA}bo Akademi University, Domkyrkotorget 1, 20500 {\AA}bo, Finland}
\email{anne-maria.ernvall-hytonen@abo.fi}

\author{Tapani Matala-aho}
\author{Louna Sepp\"al\"a}
\address{Matematiikka, PL 8000, 90014 Oulun yliopisto, Finland}
\email{tapani.matala-aho@oulu.fi}
\email{louna.seppala@student.oulu.fi}

\date{\today}

\thanks{The work of the author L.S.  was supported by the Magnus Ehrnrooth Foundation. 
The work of the author A.-M. E.-H. was supported by the Academy of Finland project 303820, and by the Finnish Cultural Foundation. \\The published version of this article may be found at \url{https://doi.org/10.1007/s00365-018-9429-3}.}

\subjclass[2010]{11J82, 11J72, 41A21}

\keywords{Diophantine approximation, Hermite-Pad\'e approximation, transcendence}

\begin{document}

%TIIVISTELMÄ
\begin{abstract}
We present a completely explicit transcendence measure for $e$. This is a continuation and an improvement to the works of Borel, Mahler and Hata on the topic. 
Furthermore, we also prove a transcendence measure for an arbitrary positive integer power of $e$. The results are based on Hermite-Pad\'e approximations 
and on careful analysis of common factors in the footsteps of Hata.
\end{abstract}

\maketitle

%JOHDANTO
\section{Introduction}

Let $m, H \ge 1$ be given and define $\omega (m, H)$ as the infimum of the numbers $r > 0$ satisfying the estimate
\begin{equation}\label{ensikaava}
\left|\lambda_0+\lambda_1 e+\lambda_2 e^{2}+ \ldots +\lambda_me^{m}\right| > \frac{1}{H^r},
\end{equation}
for all $\overline{\lambda}=(\lambda_0, \ldots ,\lambda_m)^T \in \mathbb{Z}^{m+1}\setminus\{\overline{0}\}$ with $\max_{1\le i\le m} \{|\lambda_i|\} \le H$.
Then any function greater than or equal to $\omega (m,H)$ may be called a transcendence measure for $e$ (see \cite{HATA}).
The quest to obtain good transcendence measures for $e$ dates back to Borel \cite{borel}. He proved that $\omega (m,H)$ is smaller than $c \log \log H$ for some positive constant $c$ depending only on $m$. This was considerably improved by Popken \cite{popken1,popken2}, who showed that $\omega(m,H)<m+\frac{c}{\log\log H}$ for some positive constant $c$ depending on $m$. Soon afterwards, Mahler \cite{mahler32} was able to get the dependance on $m$ explicit:
\[
\omega(m,H)<m+\frac{c m^2\log(m+1)}{\log \log H}
\]
with $c$ an absolute positive constant. The price he had to pay was that he was only able to prove the validity of the result in some 
subset of the set consisting of $m,H\in \mathbb{Z}_{\ge 1}$ with $H\geq 3$, unlike the results by Borel and Popken. 
Finally, in 1991, Khassa and Srinivasan \cite{KS} proved that the constant can be chosen to be $98$ in the set 
$m,H \in \mathbb{Z}_{\ge 1}$ with $\log \log H\geq d(m+1)^{6m}$ for some absolute constant $d>e^{950}$. 
Soon after, in 1995, Hata \cite{HATA} proved that the constant can be chosen to be $1$ in the set of $m$ and $H$ with 
$\log H\geq \max \left\{(m!)^{3\log m}, e^{24} \right\}$. A broader view about questions concerning transcendence measures can be found for instance in the books of Fel'dman and Nesterenko \cite{FELDMAN}, and Baker \cite{Baker}.

In \cite{HATA} Hata introduced a striking observation of big common factors hiding
in the auxiliary numerical approximation forms. These numerical approximation forms are closely related to the classical
Hermite-Pad\'e approximations (simultaneous approximations of the second type) of the exponential function used already by Hermite. 
The impact of the common factors was utilized in an asymptotic manner resulting in Theorem 1.2 in \cite{HATA}. 
Hata's Theorem 1.2 is sharper than Theorem 1.1 in his paper, but it is only valid for $H$ in an asymptotic sense: no explicit lower bound is given, instead, the theorem is formulated for a large enough $H$.

In this article we present a more extensive result, Theorem \ref{mainresult}. The improvements compared to Hata are made visible in its corollary, Theorem \ref{MAINCOROLLARY} below. Our Theorem \ref{MAINCOROLLARY} improves Hata's bound for the function $\omega$ in his Theorem 1.1, and extends the set of values of $H$ for which the result is valid whenever $m\geq 5$. In addition, this result makes Hata's Theorem 1.2 completely explicit, mainly due to our rigorous treatment of the common factors, giving rise to a more complicated behaviour visible in the term
\begin{equation}\label{kappavisible}
\kappa_m := \frac{1}{m} \sum_{\substack{p\le \frac{m+1}{2}\\ p\in \mathbb{P}}} 
\min_{0 \le j \le m} \left\{ \left\lfloor\frac{j}{p}\right\rfloor+\left\lfloor\frac{m-j}{p}\right\rfloor \right\} 
\frac{\log p}{p-1}\,w_p\!\left(s(m)e^{s(m)}\right),	
\end{equation}
where $w_n(x):= 1-\frac{n}{x}- \frac{n-1}{\log n}\frac{\log x}{x}$ for any $n \in \Z_{\ge 2}$, and $\mathbb{P}$ is the set of prime numbers.
We also give the exact asymptotic impact in \eqref{exact asynptotic}, as well as approximations for values of $\kappa_m$ for specific values of $m$.

\begin{theorem}\label{MAINCOROLLARY} Assume $\log H \ge s(m)e^{s(m)}$, where $s(2)=e$ and $s(m)=m(\log m)^2$ for $m \ge 3$. Now
\begin{equation}\label{mainestimate}
\omega (m, H) \le
\begin{cases}
2+ \frac{4.93}{\log \log H}, &m=2;\\
3+ \frac{6.49}{\log \log H}, &m=3;\\
4+ \frac{15.7}{\log \log H}, &m=4;\\
m +\left(1-\frac{2\kappa_m}{(\log m)^2}\right)\left(1-\frac{\kappa_m}{\log m}\right) \cdot \frac{m^2 \log m}{\log \log H}, & 5\leq m\leq 14;\\
m +\left(1-\frac{1+\kappa_m}{(\log m)^2}\right)\left(1-\frac{\kappa_m}{\log m}\right) \cdot \frac{m^2 \log m}{\log \log H}, &m \ge 15.
\end{cases}. 
\end{equation}
Asymptotically, we have
\begin{equation}\label{exact asynptotic}
\lim_{m\rightarrow \infty} \kappa_m =\kappa= \sum_{p\in\mathbb{P}} \frac{\log p}{p(p-1)}=0.75536661083\dots.
\end{equation}
\end{theorem}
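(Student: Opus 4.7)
The plan is to deduce Theorem~\ref{MAINCOROLLARY} directly from the main Theorem~\ref{mainresult}: one substitutes the hypothesis $\log H \ge s(m) e^{s(m)}$ and the definition \eqref{kappavisible} of $\kappa_m$, chooses an appropriate value for the auxiliary parameter appearing in Theorem~\ref{mainresult}, and simplifies. The cases $m=2,3,4$ would be handled numerically by hand: with $s(2)=e$ and $s(3), s(4) = m(\log m)^2$, evaluating the explicit $e$-factors, binomial coefficients, and common prime divisors that appear in Theorem~\ref{mainresult} yields the stated constants $4.93,\,6.49,\,15.7$ after rounding.

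For $m\ge 5$ the factor $m^2 \log m / \log \log H$ in \eqref{mainestimate} emerges naturally: the numerator tracks the exponents arising from the Hermite-Pad\'e denominators, while the denominator $\log \log H$ reflects the optimal dependence of the auxiliary parameter on $H$ through the hypothesis. The split at $m=14$ reflects a crossover between two possible bookkeeping routes in Theorem~\ref{mainresult}; I would verify both forms of the correction factor case by case, substituting the relevant bounds and simplifying. This part requires careful bookkeeping rather than new ideas.

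The asymptotic \eqref{exact asynptotic} is the most delicate piece. For each fixed prime $p$ and for $m$ with $p \le (m+1)/2$, writing $m = qp + r$ with $0 \le r < p$ gives
\[
\min_{0 \le j \le m}\left\{\left\lfloor \tfrac{j}{p}\right\rfloor + \left\lfloor \tfrac{m-j}{p}\right\rfloor\right\}
= \begin{cases} q, & r = p-1, \\ q - 1, & \text{otherwise,} \end{cases}
\]
so dividing by $m$ produces $\tfrac{1}{p} + O(1/m)$. Moreover $w_p(s(m) e^{s(m)}) \to 1$ as $m \to \infty$ for every fixed $p$, since the argument tends to infinity. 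Each summand therefore tends to $\frac{\log p}{p(p-1)}$. The uniform estimates $\min/m \le 1/p$ and $w_p \le 1$ give the dominating series $\sum_p \frac{\log p}{p(p-1)}$, which converges absolutely to $\kappa = 0.7553\ldots$; a dominated-convergence argument then yields \eqref{exact asynptotic}, and the numerical value of $\kappa$ is obtained by evaluating the absolutely convergent sum.

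The main obstacle will be the careful bookkeeping in the second paragraph: ensuring that the prime-counting estimates and common-factor lower bounds used in Theorem~\ref{mainresult} combine into precisely the correction factors shown in \eqref{mainestimate}, without losing an extra $\log m$ or constant along the way. Everything else reduces to elementary calculation.
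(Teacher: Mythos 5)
Your treatment of the asymptotic \eqref{exact asynptotic} is sound and somewhat different in flavor from the paper's: you use dominated convergence with the pointwise limit of each summand, whereas the paper squeezes $\kappa_m$ between an explicit lower bound (restricting the sum to $p \le \sqrt m$ and bounding $\min/m \ge 1/p - 2/m$) and an upper bound $\sum_{p\le(m+1)/2}\frac{\log p}{p(p-1)}$. Both routes are legitimate; yours is slightly cleaner conceptually, the paper's has the advantage of furnishing explicit non-asymptotic bounds that are reused elsewhere. Your exact formula for the minimum (namely $q$ when $r=p-1$, $q-1$ otherwise, writing $m=qp+r$) is also correct and a bit sharper than the one-sided bound \eqref{lattiasumma} the paper records.

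However, the central part of the proof --- the cases $m \ge 5$ in \eqref{mainestimate} --- is where the proposal has a genuine gap. You state that the correction factor $\bigl(1-\tfrac{1+\kappa_m}{(\log m)^2}\bigr)\bigl(1-\tfrac{\kappa_m}{\log m}\bigr)$ (and the alternate form for $5\le m\le14$) will come out of ``careful bookkeeping,'' but you do not describe any mechanism for producing it. The paper's argument actually requires several nonobvious steps: first apply Lemma~\ref{lemma24} / estimate \eqref{nhattuylaraja} to convert $z(\log(2H)/v)$ into $\tfrac{u}{v}\cdot\tfrac{\log(2H)}{\log\log(2H)}$; then absorb the subordinate terms $D + (m+1)\log 2$ and $C\log(\cdot)$ into a small additive perturbation of $B$; then define $f(m) := \tfrac{u(B+10^{-6})}{v\, m^2\log m}$ and factor it as $f(m)=g(m)h(m)$ where $g(m) = \tfrac{1-1/\log m}{v}$ collects the denominator and $h(m)$ the remaining ratio; and finally prove separately $g(m) \le 1 - \tfrac{1+\kappa_m}{(\log m)^2}$ and $h(m) \le 1 - \tfrac{\kappa_m}{\log m}$, each requiring a chain of elementary but delicate inequalities in $m$ and $\kappa_m$ (and, for the latter, the fact $\kappa_m\ge 0.5$ when $m\ge 13$, which is a lemma of its own). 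The split at $m=14$ is then not a ``crossover between bookkeeping routes'' in any structural sense: the algebraic argument for $h(m)$ simply fails for small $m$, so the paper falls back to verifying $f(m)\le\bigl(1-\tfrac{2\kappa_m}{(\log m)^2}\bigr)\bigl(1-\tfrac{\kappa_m}{\log m}\bigr)$ numerically by Sage for $5 \le m \le 14$. None of this appears in your outline, and without the factorization $f=gh$ and the two component inequalities the correction factors cannot be obtained. The small cases $m=2,3,4$ you describe correctly at a high level; they are indeed straightforward numerical substitutions.

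In short: the asymptotic is fine, the small-$m$ cases are fine as described, but the core $m\ge5$ derivation is not a proof --- it names the conclusion and the source theorem without supplying the reduction that actually produces the stated correction factors.
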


Throughout his work, Hata assumed $\log H \ge \max \left\{ e^{s_1(m)}, e^{24} \right\}$ with the choice $s_1(m)\sim 3m(\log m)^2$. The bound $e^{24}$ is considerably larger than our bound $s(m)e^{s(m)}$ at its smallest (excluding the small cases $m=2,3,4$): $s(5)e^{s(5)}\approx e^{15.51\ldots}$. The choice of the function $s(m)$ was made as an attempt to balance between the amount of technical details, and the improvement of the function $\omega$ against the size of the set of the values of $H$.

In our Main theorem \ref{mainresult} we present a completely explicit transcendence measure for $e$, in terms of $m$ and $H$. 
The proof starts with Lemma \ref{lemma23}, which gives a suitable criterion for studying lower bounds of linear forms in given numbers. 
Furthermore, we exploit estimates for the exact inverse function $z(y)$ of the function 
$y(z) = z \log z$, $z \geq 1/e$, in the lines suggested  in \cite{HANCLETAL}. 
As an important consequence of using the function $z(y)$, the functional dependence in $H$ is improved compared to earlier considerations.

The method displayed in this paper is applicable to proving bounds of the type displayed in \eqref{ensikaava}. As an example, we will consider the case where the polynomial is sparse, namely, where several of the coefficients $\lambda_j$ in \eqref{ensikaava} are equal to zero. As a corollary of this, we derive a transcendence measure for positive integer powers of $e$.

It should be noted that all our results are actually valid over an imaginary quadratic field $\mathbb{I}$.

\section{Main result}

Let 
$z: \left[ -\frac{1}{e}, \infty \right[ \to \left[ \frac{1}{e}, \infty \right[$
denote the inverse function of the function 
$y(z) = z \log z$, $z \geq 1/e$, and denote further
\begin{equation*}
s(m)=m(\log m)^2, \quad m \ge 3; \quad s(2)=e;
\end{equation*}
\begin{equation*}
d=d(m)=\left(m+\frac{1}{2}\right)\log m-(1+ \kappa_m)m-1.02394, \quad m \ge 5;
\end{equation*}
\begin{equation*}
d(2)=0.3654, \quad d(3)=0.5139, \quad d(4)=1.6016;
\end{equation*}
\begin{equation*}
\begin{split}
B=B(m) = \;&m^2\log m-(1+ \kappa_m)m^2+(m+1)\log (m+1)+ \\
    &\frac{1}{2}m\log m-(1.02394+ \kappa_m)m+0.0000525, \quad m \ge 5;
\end{split}
\end{equation*}
\begin{equation*}
B(2)=2.4099, \quad B(3)=3.6433, \quad B(4)=9.7676;
\end{equation*}
\begin{equation*}
D=D(m)=(m+1)\log (m+1)-\kappa_m m+0.0000525+\frac{m}{e^{s(m)}}, \quad m \ge 5;
\end{equation*}
\begin{equation*}
D(2)=3.8111, \quad D(3)=5.1819, \quad D(4)=7.3631,
\end{equation*}
with $\kappa_m$ given in \eqref{kappavisible}.

Throughout this work, let $\mathbb{I}$ denote an imaginary quadratic field and
$\mathbb{Z}_{\mathbb{I}}$ its ring of integers.

\begin{theorem}\label{mainresult}
Let $m \ge 2$ and $\log H \ge s(m) e^{s(m)}$. With the above notations, the bound
\begin{equation}\label{mainreseq}
\left|\lambda_0+\lambda_1 e+ \ldots +\lambda_m e^m\right| > \frac{1}{2e^D} \left(2H\right)^{-m-\epsilon(H)},
\end{equation}
where
\begin{equation*}
\epsilon(H)\log 2H = B z\!\left(\frac{\log (2H)}{1 - \frac{d}{s(m)}}\right) + 
m \log\left(z\!\left(\frac{\log (2H)}{1 - \frac{d}{s(m)}}\right)\right),
\end{equation*}
holds for all 
$\,\overline{\lambda}=(\lambda_0, \lambda_1, \ldots ,\lambda_m)^T \in 
\mathbb{Z}_{\mathbb{I}}^{m+1}\setminus\{\overline{0}\}$ with $\max_{1\le j\le m} \{|\lambda_j|\} \le H$.
\end{theorem}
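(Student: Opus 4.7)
The plan is to follow the Hermite-Padé route sharpened by Hata's common-factor analysis, with the approximation order $n$ chosen through the inverse function $z$ of $y\mapsto y\log y$. For each $n\ge 1$ I would construct polynomials $A_{n,0},\dots,A_{n,m}\in\mathbb{Q}[z]$ of degree $\le n$ with $R_{n,j}(z):=A_{n,0}(z)e^{jz}-A_{n,j}(z)=O\bigl(z^{(m+1)n+m}\bigr)$ as $z\to 0$, then evaluate at $z=1$ and clear denominators to obtain
\begin{equation*}
\widetilde A_{n,0}\cdot\Lambda\;=\;M_n+r_n,\qquad M_n:=\sum_{j=0}^m\lambda_j\widetilde A_{n,j}\in\ZI,\qquad r_n:=\sum_{j=1}^m\lambda_j\widetilde R_{n,j},
\end{equation*}
with $\Lambda:=\lambda_0+\lambda_1 e+\dots+\lambda_m e^m$. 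Non-vanishing of $M_n$ at some $n$ in the relevant range follows from the determinantal non-degeneracy of the Hermite-Padé system, and Lemma \ref{lemma23} converts this into a lower bound of the form $|\Lambda|\ge 1/(2|\widetilde A_{n,0}|)$ as soon as $|r_n|<1/2$.

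The crux, following Hata, is that a large common factor $\delta_n$ divides every $\widetilde A_{n,j}$ simultaneously. A $p$-adic analysis of the explicit binomial-type formula for the coefficients of $A_{n,j}$ shows that, for each prime $p\le(m+1)/2$,
\begin{equation*}
v_p(\delta_n)\;\gtrsim\;\min_{0\le j\le m}\!\left(\!\left\lfloor\tfrac{j}{p}\right\rfloor+\left\lfloor\tfrac{m-j}{p}\right\rfloor\!\right)\,\cdot\,\tfrac{n}{p-1}\qquad\text{(schematically)}.
\end{equation*}
Summing against the Mertens weight $(\log p)/(p-1)$ and replacing the asymptotic prime count by the explicit Chebyshev-type quantity $w_p(s(m)e^{s(m)})$ produces exactly the quantity $\kappa_m$ of \eqref{kappavisible}. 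Combining this with Stirling's formula applied to the Hermite integral representation of $R_{n,j}$ yields explicit bounds
\begin{equation*}
\log|\widetilde A_{n,0}/\delta_n|\le d\,n+m\log n+O(1),\qquad \log|\widetilde R_{n,j}/\delta_n|\le -n\log n + B\,n + D,
\end{equation*}
with $d,B,D$ being precisely the constants listed before the theorem; the small cases $m\in\{2,3,4\}$, in which the common-factor contribution degenerates, are verified by direct numerical computation.

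The condition $H^m|r_n|/\delta_n<1/2$ then forces $n\log n\gtrsim\log(2H)/(1-d/s(m))$, which inverts explicitly to $n=z\bigl(\log(2H)/(1-d/s(m))\bigr)$; the hypothesis $\log H\ge s(m)e^{s(m)}$ is precisely what guarantees $n\ge s(m)$, keeping all Stirling and Mertens error terms inside the explicit constants. Substituting this $n$ into $|\Lambda|\ge\delta_n/(2|\widetilde A_{n,0}|)$ gives
\begin{equation*}
|\Lambda|\;\ge\;\frac{1}{2e^{D}}\,(2H)^{-m-\epsilon(H)},\qquad \epsilon(H)\log(2H)=B\,n+m\log n,
\end{equation*}
which is \eqref{mainreseq}.

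The main obstacle is the second step: making Hata's asymptotic common-factor statement completely explicit in $m$ and $n$ requires careful bookkeeping of the Mertens-type sums over primes $p\le(m+1)/2$, verification that the divisibility is uniform in the index $j$ (encoded by the minimum inside \eqref{kappavisible}), and checking that all resulting sub-leading corrections governed by $w_p(s(m)e^{s(m)})$ fit inside the stated constants $d(m),B(m),D(m)$. The threshold $\log H\ge s(m)e^{s(m)}$ is the smallest value for which this bookkeeping closes up while still improving on Hata's bound.
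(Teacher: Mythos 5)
Your outline follows the same route as the paper: second-kind Hermite--Pad\'e approximants evaluated at $t=1$, non-vanishing of some $W_k$ via the determinant of the $(m+1)\times(m+1)$ system indexed by $k$, Hata-style $p$-adic extraction of a common factor $D_{m,l}\ge e^{\kappa_m m l}$ leading to exactly the quantity $\kappa_m$ of \eqref{kappavisible}, Stirling estimates for coefficients and remainders, and finally the criterion of Lemma \ref{lemma23} with the inverse function $z$. The final formula you state is the correct one.

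However, the two intermediate bounds you display are garbled, and the conclusion does not follow from them as written. The coefficient bound must carry the dominant term $mn\log n$: Lemma \ref{Estimates} gives $\log|B_{k,0}|\le mn\log n+bn$ with $b=(m+1)\log(m+1)-(1+\kappa_m)m+0.0000525$, while the remainder bound of Lemma \ref{Estimates2} is $\log\sum_j|L_{k,j}|\le -n\log n+dn$ with the listed $d(m)$, not $-n\log n+Bn+D$. Your bound $\log|\widetilde A_{n,0}/\delta_n|\le dn+m\log n+O(1)$ omits the $mn\log n$ term entirely; if it were true it would yield a lower bound $|\Lambda|\gg (2H)^{-o(1)}$ rather than the exponent $-m-\epsilon(H)$. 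The constants $B=b+\frac{ad}{c}$ and $D=a+b+ae^{-s(m)}$ of \eqref{BCD} do not occur in the raw Stirling estimates at all: they are created inside Lemma \ref{lemma23} by substituting $n\log n\le \frac{1}{c}\left(\log(2H)+dn\right)$ (a consequence of the stopping condition $HR(n)\ge\frac12$) into $q(n+1)$, which converts the $mn\log n$ term into the main contribution $\frac{a}{c}\log(2H)=m\log(2H)$ and leaves $Bn+m\log n+D$ as the error term, with $n\le z\!\left(\frac{\log(2H)}{1-d/s(m)}\right)$. Once the roles of $b$, $d$, $B$, $D$ are restored in this way, your sketch coincides with the paper's proof.
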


\begin{corollary}\label{corollaryofmainres}
With the assumptions of Theorem \ref{mainresult} we have
\begin{equation*}
\left|\lambda_0+\lambda_1e+ \ldots +\lambda_me^m\right| \ge 
\frac{1}{2 e^D} \left( \frac{s(m)-d}{s(m)+ \log (s(m))} \cdot \frac{\log\log(2H)}{\log(2H)}\right)^m \cdot (2H)^{-m -\frac{\widehat B}{\log \log (2H)}},
\end{equation*}
where
\begin{equation*}
\widehat B := \left(1+\frac{\log (s(m))}{s(m)} \right) \left(1 - \frac{d}{s(m)} \right)^{-1} \cdot B.
\end{equation*}
\end{corollary}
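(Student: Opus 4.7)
The strategy is to deduce the corollary from Theorem \ref{mainresult} by establishing a single upper bound on $z(Y)$, with $Y := \log(2H)/(1-d/s(m))$. Set $L := \log(2H)$ and
\begin{equation*}
K := \frac{s(m) + \log s(m)}{s(m)-d},
\end{equation*}
so that a short computation gives $\widehat{B} = KB$. The defining equation $\epsilon(H) L = B z(Y) + m\log z(Y)$ rewrites the conclusion of Theorem \ref{mainresult} as
\begin{equation*}
|\lambda_0 + \ldots + \lambda_m e^m| > \frac{1}{2 e^D}(2H)^{-m}\, z(Y)^{-m}\, e^{-B z(Y)}.
\end{equation*}
Hence it suffices to establish $z(Y) \leq KL/\log L$: this single bound yields simultaneously $z(Y)^{-m} \geq K^{-m}(\log L/L)^m$ and $e^{-B z(Y)} \geq e^{-BKL/\log L} = (2H)^{-\widehat{B}/\log L}$, which together match the right-hand side of the corollary.

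For the key bound $z(Y) \leq KL/\log L$, I would use that $z$ is the inverse of the strictly increasing map $w \mapsto w\log w$ on $[1/e,\infty)$. Since $KL/\log L \geq 1/e$ under our hypothesis, the desired bound is equivalent to $Y \leq (KL/\log L)\log(KL/\log L)$. Expanding the right-hand side and invoking the identity $KL - Y = (\log s(m))\cdot L/(s(m)-d)$, this reduces to
\begin{equation*}
\log\log L - \log K \leq \frac{(\log s(m))\log L}{s(m) + \log s(m)}.
\end{equation*}
The hypothesis $\log H \geq s(m) e^{s(m)}$ gives $\log L \geq s(m) + \log s(m)$. Defining $g(a) := (\log s(m))\,a/(s(m)+\log s(m)) - \log a + \log K$, one checks that $g'$ vanishes at $a^{*} = (s(m)+\log s(m))/\log s(m) \leq s(m)+\log s(m)$ (using $\log s(m) \geq 1$), so $g$ is non-decreasing on $[s(m)+\log s(m),\infty)$; its value at the endpoint is
\begin{equation*}
g(s(m)+\log s(m)) = \log s(m) - \log(s(m)-d) = -\log(1-d/s(m)) \geq 0,
\end{equation*}
provided $d \geq 0$.

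The main obstacle is thus the non-negativity of $d(m)$ for all $m \geq 2$: for $m \in \{2,3,4\}$ it is immediate from the explicit numerical values, while for $m \geq 5$ the expression $(m+\tfrac{1}{2})\log m - (1+\kappa_m)m - 1.02394$ is not manifestly positive for small $m$ and requires a careful lower bound on $\kappa_m$ from \eqref{kappavisible}, supplemented by monotonicity considerations for large $m$. Once $d(m) \geq 0$ is secured, the corollary follows from the clean algebraic reduction above without further difficulty.
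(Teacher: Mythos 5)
Your proof is correct, and it reaches the same intermediate target as the paper — namely the bound
$z\bigl(\tfrac{\log(2H)}{v}\bigr) \le \tfrac{u}{v}\cdot\tfrac{\log(2H)}{\log\log(2H)}$,
where $u = 1 + \tfrac{\log s(m)}{s(m)}$ and $v = 1 - \tfrac{d}{s(m)}$ — but it gets there by a genuinely different route. The paper first proves Lemma~\ref{lemma24} (from $z\log z = y$ it writes $z = \tfrac{y}{\log y}\bigl(1 + \tfrac{\log\log z}{\log z}\bigr) \le u\cdot\tfrac{y}{\log y}$ using $\log z \ge s(m) \ge e$), then applies Corollary~\ref{corollaari24} where the condition $c \le 1 + \tfrac{d}{s(m)}$ (i.e.\ $v \le 1$) lets one replace $\log\bigl(\tfrac{\log(2H)}{v}\bigr)$ by $\log\log(2H)$. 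You bypass Lemma~\ref{lemma24} entirely: you transfer the target inequality $z(Y) \le KL/\log L$ through the inverse-function characterization of $z$ into the scalar inequality $Y \le (KL/\log L)\log(KL/\log L)$, reduce it algebraically using $KL - Y = L\log s(m)/(s(m)-d)$, and then close it with a one-variable monotonicity argument for $g$. Both routes rest on $d(m) \ge 0$ (in the paper as the hypothesis of Corollary~\ref{corollaari24}, in yours to show $g(s(m)+\log s(m)) = -\log(1-d/s(m)) \ge 0$); the paper leaves this verification implicit, and you are right to flag it. One small correction: to bound $d(m)$ from below you need an \emph{upper} bound on $\kappa_m$ (since $\kappa_m$ enters $d$ with a minus sign), not a lower bound as you wrote; the paper's bound $\kappa_m \le \sum_{p \le (m+1)/2}\tfrac{\log p}{p(p-1)} < \kappa \approx 0.7554$ is not sharp enough for the smallest $m \ge 5$, so one genuinely needs to evaluate $\kappa_m$ from formula~\eqref{kappavisible} there — your observation that this point requires care is accurate.
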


%TULOKSET
\section{Preliminaries, lemmas and notation}

Fix now $\Theta_1, \ldots ,\Theta_m\in\mathbb{C} \setminus \{0\}$.
Assume that we have a sequence of simultaneous linear forms
\begin{equation}\label{LINFORMS}
L_{k,j}(n)=B_{k,0}(n)\Theta_j+B_{k,j}(n),
\end{equation}
$k=0,1, \ldots, m, \: j=1, \ldots ,m,$
where the coefficients
\begin{equation*}
B_{k,j}=B_{k,j}(n)\in\mathbb{Z}_{\mathbb{I}},\quad k,j=0,1, \ldots, m,
\end{equation*}
satisfy the determinant condition
\begin{equation}\label{DET}
\Delta:=
\begin{vmatrix}
B_{0,0}  &B_{0,1} & \ldots & B_{0,m}\\
B_{1,0}  &B_{1,1} & \ldots & B_{1,m}\\
\vdots  & \vdots &  \ddots & \vdots \\
B_{m,0}  &B_{m,1} & \ldots & B_{m,m}\\
\end{vmatrix}
\ne 0.
\end{equation}
Further, let 
$
a,b,c,d\in\mathbb{R},\: a,c>0,
$
and suppose that
\begin{equation*}
|B_{k,0}(n)|\le Q(n)=e^{q(n)},
\end{equation*}
\begin{equation*}
\sum_{j=1}^{m}|L_{k,j}(n)|\le R(n)= e^{-r(n)},
\end{equation*}
where
\begin{equation}\label{qn}
q(n)=an\log n +bn,
\end{equation}
\begin{equation}\label{-rn}
-r(n)=-cn\log n +dn
\end{equation}
for all $k \in \{0,1, \ldots ,m \}$.
Let the above assumptions be valid for all $n\ge n_0$.

Before presenting a criterion for lower bound, Lemma \ref{lemma23}, we need some properties of the inverse function of the function
$y(z) = z \log z$, $z \geq 1/e$, considered in \cite{HANCLETAL}.
\begin{lemma}\label{inverse}
\cite{HANCLETAL} The inverse function $z(y)$ of the function
$y(z)= z \log z$, $z \geq 1/e$,
is strictly increasing. 
Define $z_0(y)=y$ and $z_n(y)=\frac{y}{\log z_{n-1} (y)}$ for $n\in\mathbb Z^+$. 
Suppose $y>e$, then 
$
z_1<z_3<\cdots <z<\cdots <z_2<z_0. 
$
Thus the inverse function may be given by the infinite nested logarithm fraction
\begin{equation*}
z(y) = \lim_{n\to\infty} z_{n}(y)=\frac{y}{\log \frac{y}{\log \frac{y}{\log \cdots}}},\quad y>e. 
\end{equation*}
\end{lemma}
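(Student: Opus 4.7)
The plan is to split the proof into four stages: (i) strict monotonicity of $z(y)$; (ii) the base chain $z_1<z<z_2<z_0$; (iii) propagation by induction; (iv) convergence of the two subsequences to $z(y)$. Throughout, I would use the identity $z=y/\log z$ and view $z$ as the fixed point of $f(w):=y/\log w$, which is strictly decreasing on $(1,\infty)$.

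For (i), the derivative $y'(z)=1+\log z$ is positive on $(1/e,\infty)$, so $y(z)=z\log z$ is a strictly increasing bijection from $[1/e,\infty)$ onto $[-1/e,\infty)$, whence its inverse $z(y)$ is strictly increasing. Since $y(e)=e$, the hypothesis $y>e$ forces $z>e$ and $\log z>1$. For (ii), $\log z>1$ gives $z<y=z_0$, and applying $f$ yields $z_1=y/\log y<y/\log z=z$. The delicate step is $z<z_2<z_0$, which reduces to $1<\log z_1<\log z$: the upper bound is immediate from $z_1<z$, while the lower bound $z_1>e$ follows because $y\mapsto y/\log y$ attains its minimum value $e$ on $(1,\infty)$ at $y=e$, so $y>e$ implies $z_1>e$. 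This secures $z_1<z<z_2<z_0$.

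For (iii), I would induct on $k$ the statement
\[
z_1<z_3<\cdots<z_{2k+1}<z<z_{2k}<\cdots<z_2<z_0,
\]
by applying $f$ to the innermost pair: strict monotonicity of $\log$ reverses inequalities, and the threshold $z_n>1$ needed to take logarithms is automatic from $z_{2k+1}\geq z_1>e$ and $z_{2k}\geq z>e$. The main technical obstacle is just bookkeeping the alternating directions of the inequalities, but the base case in (ii) makes every step safe.

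Finally, for (iv), the monotone bounded subsequences $(z_{2k})$ and $(z_{2k+1})$ converge to limits $\ell_+\geq z$ and $\ell_-\leq z$ with $\ell_\pm>e$. Passing to the limit in $z_{n+1}=y/\log z_n$ gives $\ell_+\log\ell_-=y=\ell_-\log\ell_+$, that is, $\ell_+/\log\ell_+=\ell_-/\log\ell_-$. Since $w\mapsto w/\log w$ is strictly increasing on $(e,\infty)$, this forces $\ell_+=\ell_-$, and the common value satisfies $w\log w=y$, hence equals $z(y)$ by (i). This yields $z_n\to z(y)$ and the nested-logarithm formula claimed in the lemma.
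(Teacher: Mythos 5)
The paper itself does not prove this lemma: it is stated with the citation \cite{HANCLETAL} and no proof is given in the text, so there is no internal argument to compare against. Evaluated on its own merits, your proof is correct and complete. It is the standard fixed-point iteration argument for the decreasing map $f(w)=y/\log w$: (i) the monotonicity of $z(y)$ via $y'(z)=1+\log z>0$; (ii) the base chain $z_1<z<z_2<z_0$, where the only nontrivial ingredient is the fact that $y/\log y$ has minimum value $e$ on $(1,\infty)$ so that $z_1>e$; (iii) the alternating interleaving propagated by applying the order-reversing $f$; and (iv) convergence by the monotone convergence theorem, with uniqueness of the common limit deduced from the strict monotonicity of $w\mapsto w/\log w$ on $(e,\infty)$. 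Each step is justified, and the threshold $z_n>e$ needed to iterate $f$ safely is indeed maintained throughout. This is almost certainly the same line of reasoning used in the cited source, and I see no gaps.
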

Further, we denote
\begin{multline}\label{BCD}
B:=b+\frac{ad}{c},\ C:=a,\ D:=a+b+ae^{-s(m)},\ F^{-1}:= 2e^{D},\\
v := c - \frac{d}{s(m)},\ n_1:=\max\left\{n_0, e, e^{s(m)} \right\}.
\end{multline}

\begin{lemma}\label{lemma23}
Let $m\ge 1$ and $\log (2H) \ge v n_1 \log n_1$. Then, under the above assumptions \eqref{DET}-\eqref{-rn}, the bound
\begin{equation*}
\left|\lambda_0+\lambda_1\Theta_1+ \ldots +\lambda_m\Theta_m\right| > 
F \left(2H\right)^{-\frac{a}{c}-\epsilon(H)},
\end{equation*}
\begin{equation*}
\epsilon(H)\log 2H = B z\!\left(\frac{\log (2H)}{v}\right) + C\log\left(z\!\left(\frac{\log (2H)}{v}\right)\right)
\end{equation*}
holds for all 
$\,\overline{\lambda}=(\lambda_0, \lambda_1, \ldots ,\lambda_m)^T \in 
\mathbb{Z}_{\mathbb{I}}^{m+1}\setminus\{\overline{0}\}$
with $\max_{1 \le j \le m} \{ |\lambda_j |\} \le H$. 
\end{lemma}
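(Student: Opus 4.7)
My plan is the standard Hermite-Pad\'e linearization adapted to the inverse-function framework of Lemma \ref{inverse}. For each integer $n\ge n_0$, I would form the $\Z_{\mathbb I}$-valued combinations
\[
\Lambda_k(n) := \lambda_0 B_{k,0}(n) - \sum_{j=1}^{m} \lambda_j B_{k,j}(n), \quad k=0,\ldots,m.
\]
Expanding with $L_{k,j}=B_{k,0}\Theta_j+B_{k,j}$ gives the identity $\Lambda_k(n)=B_{k,0}(n)\Lambda - \sum_j \lambda_j L_{k,j}(n)$, where $\Lambda := \lambda_0+\sum_j \lambda_j\Theta_j$. The linear map $\overline{\lambda}\mapsto(\Lambda_k(n))_{k=0}^m$ has determinant $\pm\Delta\ne 0$, so some $\Lambda_k(n)\ne 0$; since $\mathbb I$ is imaginary quadratic we get $|\Lambda_k(n)|\ge 1$, and the triangle inequality yields
\[
1 \le Q(n)|\Lambda| + HR(n).
\]

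Next I would set $n := \lceil n^* \rceil$, where $n^*$ is the unique real solution of $r(n^*)=\log(2H)$. Monotonicity of $r$ then gives $HR(n)\le 1/2$, hence $|\Lambda|\ge 1/(2Q(n))$. A crucial intermediate claim is $n^*\le z(y)$ with $y := \log(2H)/v$: writing $c = v + d/s(m)$, at $x=z(y)$ one computes
\[
r(z(y)) - \log(2H) \;=\; d\,z(y)\!\left(\frac{\log z(y)}{s(m)}-1\right) \;\ge\; 0,
\]
using $\log z(y)\ge s(m)$, which follows from the hypothesis $\log(2H)\ge v n_1 \log n_1$ via $z(y)\ge n_1\ge e^{s(m)}$. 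The same chain ensures $n\ge n_1\ge n_0$, so the hypothesized bounds on $B_{k,0}(n)$ and $L_{k,j}(n)$ are in force.

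The delicate step is translating $(2Q(n))^{-1}$ into $F(2H)^{-a/c-\epsilon(H)}$. The naive estimate $n\log n\le (z+1)\log(z+1)\le y+1+\log z+1/z$ is too loose: after dividing by $v=c-d/s(m)$ it replaces the required coefficient $(ad/c)z$ by the strictly larger $(ad/(cs(m)))z\log z$. The remedy I plan to use is the identity $cn\log n = r(n)+dn$, combined with the bound $r(n)\le r(n^*+1)\le \log(2H) + c\log z + c/z + c - d$ obtained from $r'(t)=c\log t+c-d$ and $\log(n^*+1)\le \log(z+1)\le \log z + 1/z$; adding $dn\le dn^*+d$ then cancels the $\pm d$ and leaves the clean contribution $dn^*\le dz$. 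Dividing by $c$, adding $bn\le bz+b$, and using $1/z\le e^{-s(m)}$ yields
\[
q(n) \;\le\; \tfrac{a}{c}\log(2H)+\bigl(b+\tfrac{ad}{c}\bigr)z+a\log z+\bigl(a+b+ae^{-s(m)}\bigr) \;=\; \bigl(\tfrac{a}{c}+\epsilon(H)\bigr)\log(2H)+D,
\]
where I have recognized $Bz+C\log z=\epsilon(H)\log(2H)$. Exponentiating and combining with $|\Lambda|\ge(2Q(n))^{-1}$ delivers the stated bound.

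The principal obstacle is exactly this $d$-accounting: routing the integer rounding $n=\lceil n^*\rceil$ through the identity $cn\log n = r(n)+dn$ is essential, because a direct estimate of $n\log n$ inflates the clean $(ad/c)z$ coefficient by a factor $\log z/s(m)\ge 1$, which would ruin the quoted $\epsilon(H)$ as soon as $\log z>s(m)$. Strict inequality in the conclusion comes from any slack in the chain (typically $1/z<e^{-s(m)}$ or $r(n)>\log(2H)$, both strict outside a boundary case that can be handled by an infinitesimal perturbation of $n^*$).
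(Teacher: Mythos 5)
Your proposal is correct and follows essentially the same route as the paper: both arguments bound the linear form via the determinant condition, pick $n$ near the unique root $n^*$ of $r(n^*)=\log(2H)$, prove $n^*\le z(\log(2H)/v)$ via $\log z\ge s(m)$, and—crucially—route $q(n)$ through the identity $cn\log n=r(n)+dn$ rather than bounding $n\log n$ naively, so that $\tfrac{a}{c}r(n)$ yields the clean $\tfrac{a}{c}\log(2H)$ contribution with a linear $\tfrac{ad}{c}n$ remainder. The paper works with the largest integer $n_2$ satisfying $HR(n_2)\ge\tfrac12$ and expands $\log(n_2+1)\le\log n_2+1/n_2$, whereas you take $n=\lceil n^*\rceil$ and use a derivative bound on $r$ at $n^*+1$; these are cosmetic variants and produce the same $B$, $C$, $D$.
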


\begin{proof}
We use the notation
\begin{equation*}
\Lambda:=\lambda_0+\lambda_1\Theta_1+ \ldots +\lambda_m\Theta_m,\quad \lambda_j\in\mathbb{Z}_{\mathbb{I}},
\end{equation*}
for the linear form to be estimated.
Using our simultaneous linear forms 
\begin{equation*}
L_{k,j}(n)=B_{k,0}(n)\Theta_j+B_{k,j}(n)
\end{equation*}
from \eqref{LINFORMS} we get
\begin{equation}\label{G+R}
B_{k,0}(n)\Lambda=W_k+\lambda_1 L_{k,1}(n)+ \ldots +\lambda_m L_{k,m}(n), 
\end{equation}
where
\begin{equation}\label{GKOK}
W_k(n)=B_{k,0}(n)\lambda_0- \lambda_1 B_{k,1}(n)- \ldots -\lambda_m B_{k,m}(n)\in\mathbb{Z}_{\mathbb{I}}.
\end{equation}
If now $W_k(n)\ne 0$, then by \eqref{G+R} and \eqref{GKOK} we get
\begin{equation*}
\begin{split}
1 \le |W_k(n)| &=|B_{k,0}(n)\Lambda-(\lambda_1 L_{k,1}+ \ldots +\lambda_m L_{k,m})| \\
&\le |B_{k,0}(n)||\Lambda |+\sum_{j=1}^{m}|\lambda_j||L_{k,j}(n)| \le Q(n)|\Lambda |+HR(n).
\end{split}
\end{equation*}
Now we take the largest $n_2$ with
\begin{equation}\label{takehatn}
n_2\ge n_1:=\max\left\{n_0, e, e^{s(m)} \right\}
\end{equation}
such that $\frac{1}{2}\le HR(n_2) $ with big enough $H$ (to be determined later). Consequently $HR(n_2+1)<\frac{1}{2}$.

According to the non-vanishing of the determinant \eqref{DET} and the assumption
$\overline{\lambda} \ne \overline{0}$,
it follows that $W_k(n_2+1)\ne 0$ for some integer $k\in[0,m]$. Hence we get the estimate
\begin{equation}\label{KIIKKU1} 
1< 2|\Lambda|Q(n_2+1) 
\end{equation}
for our linear form $\Lambda$, where we need to write $Q(n_2+1)$ in terms of $2H$.

Since $\frac{1}{2}\leq HR(n_2)$, we have
\begin{equation}\label{log2mHrhat}
\log (2H)\ge r(n_2) =c n_2 \log n_2 -dn_2 = n_2 \log n_2 \left(c - \frac{d}{\log n_2} \right).
\end{equation}
By \eqref{takehatn} we have $\log n_2\ge s(m)$. Thus
\begin{equation*}
\log (2H) \ge \left(c - \frac{d}{s(m)} \right) n_2 \log n_2 = v n_2 \log n_2,
\end{equation*}
or equivalently $n_2 \log n_2 \le \frac{\log (2H)}{v}$. Further, $n_2 \ge n_1 \ge e^{s(m)}$ by \eqref{takehatn}, which implies 
\begin{equation*}\label{lograja}
\frac{\log (2H)}{v}\geq n_2\log n_2 \geq  s(m) e^{s(m)}.
\end{equation*}

Then, by the properties of the function $z(y)$ given in Lemma \ref{inverse}, we get
\begin{equation}\label{zetan}
n_2 \le z\left(\frac{\log (2H)}{v}\right).
\end{equation}

Now we are ready to estimate $Q(n_2+1)=e^{q(n_2+1)}$ as follows:
\begin{equation}\label{intom}
\begin{split}
q(n_2+1) = \;&a(n_2+1)\log(n_2+1) +b(n_2+1)\\
< \;&a(n_2+1)\left(\log n_2+\frac{1}{n_2}\right)+b(n_2+1)\\
= \;&an_2\log n_2+a\log n_2+bn_2+a+b+\frac{a}{n_2}.
\end{split}
\end{equation}
By \eqref{log2mHrhat} we get
\begin{equation}\label{sub1}
n_2\log n_2\le \frac{1}{c}\left(\log (2H)+dn_2 \right).
\end{equation}
Substituting \eqref{sub1} into \eqref{intom} gives
\begin{equation*}
\begin{split}
q(n_2+1) &\leq \frac{a}{c}\left(\log (2H)+dn_2\right)+a\log n_2+bn_2+a+b+\frac{a}{n_2}\\
&\le \frac{a}{c}\log (2H)+\left(b+\frac{ad}{c}\right)n_2+ a \log n_2 +a+b+ae^{-s(m)},
\end{split}
\end{equation*}
where we applied \eqref{takehatn}.

Hence
\begin{equation*}
\begin{split}
Q(n_2+1) &\le \exp\left(\frac{a}{c}\log (2H)+\left(b+\frac{ad}{c}\right)n_2+ a \log n_2 +a+b+ae^{-s(m)}\right)\\
            &= \left(2H\right)^{\frac{a}{c}}e^{Bn_2+C\log n_2+D},
\end{split}
\end{equation*}
where $B$, $C$ and $D$ are precisely as in the formulation of Lemma \ref{lemma23}. The claim now follows from \eqref{KIIKKU1} and \eqref{zetan}.
\end{proof}

Let us now formulate a lemma that can be used to bound the function $z$. It is extremely useful while comparing our results with the results of others.

\begin{lemma}\label{lemma24}
If $y\geq s(m) e^{s(m)}$, we have $z(y) \geq e^{s(m)}$. When in addition $s(m) \ge e$, for the inverse function of $z(y)$ of the function $y(z) = z \log z$ it holds
\begin{equation*}
z(y) \le\left(1+\frac{\log(s(m))}{s(m)}\right) \frac{y}{\log y}.
\end{equation*}
\end{lemma}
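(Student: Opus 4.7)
The plan is to handle the two claims separately. Both will follow from the defining relation $y(z) = z \log z$ combined with the monotonicity of $z$ established in Lemma \ref{inverse}, and essentially amount to rewriting the desired inequalities in terms of $z$ rather than $y$.

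For the first claim, I would use the strict monotonicity of $z(y)$ to reduce to evaluating $z$ at the threshold $y_0 = s(m)e^{s(m)}$. Substituting $z_\star = e^{s(m)}$ into $y(z) = z\log z$ gives $z_\star \log z_\star = e^{s(m)} \cdot s(m) = y_0$, so by definition $z(y_0) = e^{s(m)}$. Monotonicity then yields $z(y) \geq e^{s(m)}$ for every $y \geq y_0$.

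For the second claim I would set $z = z(y)$, so that $y = z\log z$ and (since $z \geq e^{s(m)} \geq e$ gives $\log z \geq 1$) $\log y = \log z + \log\log z$. Then
\[
\frac{y}{\log y} = \frac{z\log z}{\log z + \log\log z} = \frac{z}{1 + \frac{\log\log z}{\log z}},
\]
so the desired upper bound on $z$ is equivalent to
\[
\frac{\log\log z}{\log z} \leq \frac{\log s(m)}{s(m)}.
\]
The function $t \mapsto (\log t)/t$ is decreasing on $[e,\infty)$, and by the first part together with the assumption $s(m) \geq e$ we have $\log z \geq s(m) \geq e$. Applying monotonicity of $(\log t)/t$ at $t = s(m)$ and $t = \log z$ gives the required inequality.

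The only delicate point is making sure the hypothesis $s(m) \geq e$ is invoked exactly where needed: it is what places $\log z$ inside the decreasing regime of $(\log t)/t$ and simultaneously legitimises the expansion $\log y = \log z + \log\log z$ with a non-negative second term. Beyond this bookkeeping, the argument is a direct computation with no real obstacle.
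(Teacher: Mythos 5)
Your proof is correct and follows essentially the same route as the paper: both reduce the second claim to the inequality $\frac{\log\log z}{\log z} \le \frac{\log s(m)}{s(m)}$ via $\log y = \log z + \log\log z$, and then invoke the monotonicity of $t \mapsto (\log t)/t$ on $[e,\infty)$ together with $\log z \ge s(m) \ge e$. Your treatment of the first claim (evaluating $z$ exactly at the threshold $y_0 = s(m)e^{s(m)}$) is slightly more explicit than the paper, which leaves it implicit, but the underlying argument is identical.
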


\begin{proof}
Denote $z:= z(y)$ with $y \ge s(m) e^{s(m)}$. Then
\[
z=\frac{y}{\log z}
= \frac{y}{\log y}\frac{\log y}{\log z}
=\frac{y}{\log y}\left(1+\frac{\log\log z}{\log z}\right)
\le\frac{y}{\log y}\left(1+\frac{\log(s(m))}{s(m)}\right),
\]
because $\log z \ge s(m) \ge e$.
\end{proof}
\begin{corollary}\label{corollaari24}
If $c \le 1 + \frac{d}{s(m)}$, $\log (2H) \ge v n_1 \log n_1$ and $u:= 1+\frac{\log(s(m))}{s(m)}$, then
\begin{equation*}
\left|\lambda_0+\lambda_1\Theta_1+ \ldots +\lambda_m\Theta_m\right| \ge \frac{v^C}{2 e^{D}u^{C}} \left(\frac{\log\log(2H)}{\log(2H)}\right)^{C} \cdot (2H)^{-\frac{a}{c} -\frac{Bu}{v \log\log(2H)}}.
\end{equation*}
\end{corollary}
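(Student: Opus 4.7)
The plan is to feed the upper bound on $z$ from Lemma \ref{lemma24} into the conclusion of Lemma \ref{lemma23} and simplify. First, since all hypotheses of Lemma \ref{lemma23} are the same as those of the corollary, we obtain directly
\begin{equation*}
|\Lambda| > F(2H)^{-a/c}\cdot (2H)^{-\epsilon(H)},\qquad (2H)^{-\epsilon(H)} = z^{-C}\,e^{-Bz},
\end{equation*}
where $z := z\bigl(\log(2H)/v\bigr)$ and $F = (2e^D)^{-1}$. So everything reduces to bounding the quantity $z^{-C}e^{-Bz}$ from below, which in turn just means bounding $z$ from above.

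Next I would verify the applicability of Lemma \ref{lemma24} with $y := \log(2H)/v$. The hypothesis $\log(2H) \ge v n_1\log n_1$ combined with $n_1 \ge e^{s(m)}$ yields $y \ge n_1\log n_1 \ge s(m)e^{s(m)}$, which is exactly the condition required by Lemma \ref{lemma24}. That lemma then gives
\begin{equation*}
z \le u\cdot\frac{y}{\log y} = \frac{u}{v}\cdot\frac{\log(2H)}{\log y}.
\end{equation*}

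The key technical step is to convert $\log y$ back into $\log\log(2H)$, and this is precisely where the hypothesis $c \le 1 + d/s(m)$ enters. From $v = c - d/s(m)$ this hypothesis is equivalent to $v \le 1$, so $\log v \le 0$ and hence $\log y = \log\log(2H) - \log v \ge \log\log(2H)$. Consequently
\begin{equation*}
z \le \frac{u}{v}\cdot\frac{\log(2H)}{\log\log(2H)}.
\end{equation*}
I expect this to be the main (if routine) obstacle: one has to notice that the purpose of the assumption $c \le 1 + d/s(m)$ is exactly to guarantee $v \le 1$, so that dividing by $v$ inside the logarithm does not spoil the bound.

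Finally, I would plug this upper bound on $z$ into $z^{-C}e^{-Bz}$. The factor $z^{-C}$ is bounded below by $\bigl(v\log\log(2H)/(u\log(2H))\bigr)^C$, producing the prefactor $(v/u)^C\bigl(\log\log(2H)/\log(2H)\bigr)^C$. The factor $e^{-Bz}$ is bounded below by $\exp\bigl(-Bu\log(2H)/(v\log\log(2H))\bigr) = (2H)^{-Bu/(v\log\log(2H))}$, which, combined with the pre-existing $(2H)^{-a/c}$, yields the exponent $-a/c - Bu/(v\log\log(2H))$. Multiplying through by $F = 1/(2e^D)$ gives exactly the asserted inequality.
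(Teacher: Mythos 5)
Your proposal is correct and follows essentially the same route as the paper: apply Lemma \ref{lemma24} to bound $z(\log(2H)/v)$ from above, then substitute this into the bound from Lemma \ref{lemma23}. You make explicit, more clearly than the paper does, why the hypothesis $c \le 1 + d/s(m)$ is needed (to ensure $v \le 1$ so that $\log(\log(2H)/v) \ge \log\log(2H)$), but the underlying argument is the same.
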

\begin{proof}
Since $c \le 1 + \frac{d}{s(m)}$, Lemma \ref{lemma24} gives
\begin{equation}\label{nhattuylaraja}
z\left(\frac{\log (2H)}{c - \frac{d}{s(m)}}\right) 
\le \left(1+\frac{\log(s(m))}{s(m)}\right) \frac{\frac{\log (2H)}{c - \frac{d}{s(m)}}}{\log \left(\frac{\log (2H)}{c - \frac{d}{s(m)}}\right)}
\le \frac{u}{v} \cdot \frac{\log (2H)}{\log \log (2H)},
\end{equation}
when we denote $u := 1+\frac{\log(s(m))}{s(m)}$. Lemma \ref{lemma23} now implies
\begin{equation*}
\begin{split}
\left|\lambda_0+\lambda_1\Theta_1+ \ldots +\lambda_m\Theta_m\right| &> \frac{1}{2e^D} \left(2H\right)^{-\frac{a}{c}} e^{-Bz\left(\frac{\log (2H)}{v}\right)-C\log \left(z\left(\frac{\log (2H)}{v}\right) \right)}\\
&\geq \frac{1}{2e^D} \left(2H\right)^{-\frac{a}{c}} e^{- \frac{u B}{v} \frac{\log(2H)}{\log\log(2H)}} \left( \frac{u}{v} \cdot \frac{\log(2H)}{\log\log(2H)} \right)^{-C}\\
&= \frac{v^C}{2 e^{D}u^{C}} \left(\frac{\log\log(2H)}{\log(2H)}\right)^{C} (2H)^{-\frac{a}{c} -\frac{Bu}{v \log\log(2H)}}.
\end{split}
\end{equation*}
\end{proof}

%HERMITE-PADÉ
\section{Hermite-Pad\'e approximants for the exponential function}\label{Hermite-Pade}

Hermite-Pad\'e approximants of the exponential function date back to Hermite's \cite{HERMITE} transcendence proof of $e$; see
also \cite{WALDSCHMIDT}.

\begin{lemma}\label{Omegajasigma}
Let $\beta_0=0$, $\overline\beta=(\beta_0, \beta_1, \ldots ,\beta_m)^T \in \C^{m+1}$, and
$\overline l=(l_0,l_1, \ldots, l_m)^T\in\mathbb{Z}_{\geq1}^{m+1}$ be given and define 
$\sigma_{i}=\sigma_{i}\!\left(\overline l,\overline\beta \right)$ by
\begin{equation*}
\Omega\!\left(w, \overline{\beta} \right)=\prod_{j=0}^{m}(\beta_j-w)^{l_j}=
\sum_{i=l_0}^{L}\sigma_{i}w^i,\quad L=l_0+ \ldots +l_m.
\end{equation*}
Then 
\begin{equation}\label{sigma_i}
\sigma_i=\sigma_{i}\!\left(\overline l,\overline\beta \right)=(-1)^i\sum_{l_0+i_1+ \ldots +i_m=i}
\binom{l_1}{i_1} \cdots \binom{l_m}{i_m} \cdot \beta_1^{l_1-i_1}\cdots\beta_m^{l_m-i_m}
\end{equation}
and
\begin{equation*}
\sum_{i=0}^{L}\sigma_{i}i^{k_j}\beta_j^i=\sum_{i=l_0}^{L}\sigma_{i}i^{k_j}\beta_j^i=0
\end{equation*}
for all $j\in\{0,1, \ldots ,m\}$ and $k_j\in\{0, \ldots ,l_j-1\}$.
\end{lemma}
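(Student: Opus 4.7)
The plan is to prove the two assertions separately. The explicit formula for $\sigma_i$ is a direct binomial expansion, while the vanishing relations follow from the differential structure of $\Omega(\cdot, \overline\beta)$.

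For the formula \eqref{sigma_i}, I would use the hypothesis $\beta_0 = 0$ to rewrite the $j = 0$ factor as $(\beta_0 - w)^{l_0} = (-1)^{l_0} w^{l_0}$, then expand each remaining factor by the binomial theorem,
\[
(\beta_j - w)^{l_j} = \sum_{i_j=0}^{l_j} \binom{l_j}{i_j} \beta_j^{l_j - i_j} (-w)^{i_j}, \quad j \ge 1,
\]
multiply out, and collect the coefficient of $w^i$ under the constraint $l_0 + i_1 + \cdots + i_m = i$; the overall sign $(-1)^i$ results from combining $(-1)^{l_0}$ with the $(-1)^{i_1 + \cdots + i_m}$ coming out of the binomial expansions. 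The explicit factor $w^{l_0}$ also shows that $\sigma_i = 0$ for every $i < l_0$, which gives the equality of the two sums in the second statement.

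For the vanishing relations I would introduce the Euler operator $D_w := w\,\frac{d}{dw}$, which acts on monomials by $D_w^k w^i = i^k w^i$. Applied termwise to $\Omega(w, \overline\beta) = \sum_i \sigma_i w^i$ and evaluated at $w = \beta_j$, this gives
\[
\sum_{i=0}^L \sigma_i\, i^{k_j} \beta_j^{i} \;=\; \bigl[ D_w^{k_j} \Omega(w, \overline\beta) \bigr]_{w = \beta_j}.
\]
Since $(\beta_j - w)^{l_j}$ divides $\Omega$, the polynomial $\Omega$ has a zero of order at least $l_j$ at $\beta_j$, and a simple induction on $k$ shows that $D_w$ decreases the order of vanishing at any point by at most one (differentiation costs a unit of order, while multiplication by $w$ is neutral at a nonzero point and actually restores a unit of order at the origin). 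Hence $D_w^{k_j} \Omega$ still vanishes at $\beta_j$ whenever $k_j \le l_j - 1$, which is exactly the hypothesis.

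I expect no real obstacle: the lemma is essentially a combination of the binomial theorem with the classical Hermite-Pad\'e interpolation property of $\Omega$. The only mild care point is that the Euler operator argument behaves slightly differently at $\beta_0 = 0$, and this is handled by the observation that $D_w$ does not decrease the order of vanishing at the origin, so the case $j = 0$ is covered by the same bound $l_0 - k_0 \ge 1$.
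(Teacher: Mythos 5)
Your proof is correct. Note that the paper states this lemma without any proof at all, treating it as a classical fact about Hermite--Pad\'e constructions, so there is no paper proof to compare against. Your two-part argument is the natural one: the binomial expansion using $(\beta_0-w)^{l_0}=(-1)^{l_0}w^{l_0}$ gives the coefficient formula and the fact that $\sigma_i=0$ for $i<l_0$, and the Euler operator $D_w = w\,\tfrac{d}{dw}$ correctly converts the weighted sum $\sum_i\sigma_i i^{k_j}\beta_j^i$ into $\bigl[D_w^{k_j}\Omega\bigr]_{w=\beta_j}$. The order-of-vanishing bookkeeping is right: if $\Omega=(w-\beta)^{\ell}g$ with $g(\beta)\neq 0$ and $\ell\ge 1$, then $D_w\Omega=(w-\beta)^{\ell-1}\bigl[\ell w g+w(w-\beta)g'\bigr]$, so the order drops by exactly one at a nonzero $\beta$ and is preserved at $\beta=0$; hence $k_j\le l_j-1$ iterations keep a zero at $\beta_j$. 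A slightly slicker phrasing you might consider is to use the classical identity $D_w^{k}=\sum_{r=0}^{k}S(k,r)\,w^{r}\tfrac{d^{r}}{dw^{r}}$ (Stirling numbers of the second kind), which reduces the vanishing directly to $\Omega^{(r)}(\beta_j)=0$ for $r\le k_j\le l_j-1$, with no need to track how $D_w$ affects orders at $0$ versus elsewhere; but your version is equally valid.
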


\begin{theorem}\label{Padeapprox}
Let $\alpha_0, \alpha_1, \ldots, \alpha_m$ be $m+1$ distinct complex numbers. Denote 
$\overline{\alpha} = (\alpha_0, \alpha_1, \ldots, \alpha_m)^T \in \C^{m+1}$ and 
$\overline{l} = ( l_0,  l_1, \ldots,  l_m)^T \in \Z_{\ge 1}^{m+1}$. Put
\begin{equation}\label{ALmu0}
A_{\overline{l},0}(t,\overline\alpha)= \sum_{i=l_0}^{L}t^{L-i} i! \sigma_i\!\left(\overline{l}, \overline{\alpha}\right).
\end{equation}
Then there exist polynomials $A_{\overline{l},j}(t,\overline\alpha)$ and remainders $R_{\overline{l},j}(t,\overline\alpha)$ such that
\begin{equation}\label{AF-A}
e^{\alpha_j t} A_{\overline{l},0}(t,\overline\alpha) - A_{\overline{l},j}(t,\overline\alpha) = R_{\overline{l},j}(t,\overline\alpha),
\end{equation}
where
\begin{equation*}
\begin{cases}
\deg_t A_{\overline{l},0}(t,\overline\alpha) = L-l_0,\\
\deg_t A_{\overline{l},j}(t,\overline\alpha) = L-l_j,\\
\underset{t=0}{\ord} R_{\overline{l},j}(t,\overline\alpha)\ge L+1
\end{cases}
\end{equation*}
for $j=1, \ldots, m$.
\end{theorem}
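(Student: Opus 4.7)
My plan is to apply Hermite's classical Laplace-transform construction, recognising the explicit formula \eqref{ALmu0} for $A_{\overline{l},0}$ as the polynomial factor of a Laplace integral of $\Omega$. Concretely, the identity $\int_0^\infty e^{-tu} u^i\,du = i!/t^{i+1}$ (valid for $\text{Re}(t) > 0$), applied termwise to the expansion $\Omega(u,\overline{\alpha}) = \sum_{i=l_0}^L \sigma_i\,u^i$ from Lemma \ref{Omegajasigma}, yields
\[
A_{\overline{l},0}(t,\overline{\alpha}) \;=\; t^{L+1}\int_0^\infty e^{-tu}\,\Omega(u,\overline{\alpha})\,du.
\]
From this the degree identity $\deg_t A_{\overline{l},0} = L - l_0$ is immediate: the $i = l_0$ term contributes the leading power $t^{L-l_0}$ with the nonzero coefficient $l_0!\,\sigma_{l_0}$.

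To construct the pieces for $j \ge 1$, I multiply by $e^{\alpha_j t}$ and shift the contour $u \mapsto \alpha_j + v$ (justified by Cauchy's theorem, since $e^{-tu}\Omega(u)$ decays exponentially along horizontal rays for $\text{Re}(t) > 0$):
\[
e^{\alpha_j t} A_{\overline{l},0}(t,\overline{\alpha}) \;=\; t^{L+1}\int_{-\alpha_j}^{\infty} e^{-tv}\,\Omega(\alpha_j+v,\overline{\alpha})\,dv.
\]
Splitting at $v = 0$ then naturally produces the polynomial part and the remainder, and I set
\[
A_{\overline{l},j}(t,\overline{\alpha}) := t^{L+1}\int_0^\infty e^{-tv}\,\Omega(\alpha_j+v,\overline{\alpha})\,dv, \quad
R_{\overline{l},j}(t,\overline{\alpha}) := t^{L+1}\int_{-\alpha_j}^{0} e^{-tv}\,\Omega(\alpha_j+v,\overline{\alpha})\,dv,
\]
so that \eqref{AF-A} holds tautologically.

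The remaining degree and order conditions fall out by direct expansion. Because $\Omega(u,\overline{\alpha})$ has a zero of order exactly $l_j$ at $u = \alpha_j$, we may write $\Omega(\alpha_j+v,\overline{\alpha}) = \sum_{n=l_j}^L c_n v^n$ with $c_{l_j} = (-1)^{l_j}\prod_{k\ne j}(\alpha_k-\alpha_j)^{l_k}\ne 0$ (using the distinctness of the $\alpha_k$) and $c_L = (-1)^L$. Re-applying the Laplace identity produces $A_{\overline{l},j}(t) = \sum_{n=l_j}^L c_n\,n!\,t^{L-n}$, a polynomial in $t$ of degree exactly $L - l_j$. For the remainder, expanding $e^{-tv}$ as a power series in $t$ gives
\[
R_{\overline{l},j}(t,\overline{\alpha}) = t^{L+1}\sum_{n \ge 0}\frac{(-t)^n}{n!}\int_{-\alpha_j}^{0} v^n\,\Omega(\alpha_j+v,\overline{\alpha})\,dv,
\]
which manifestly has order at least $L+1$ at $t = 0$.

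The one subtle step will be the contour shift for general complex $\alpha_j$; the decay that underlies Cauchy's theorem is routine but should be checked explicitly. If one prefers to bypass complex analysis entirely, the same conclusion follows formally: define $A_{\overline{l},j}$ directly by $\sum_{n=l_j}^L c_n\,n!\,t^{L-n}$ and verify \eqref{AF-A} by computing the Taylor coefficients of $e^{\alpha_j t}A_{\overline{l},0}(t)$ at $t=0$ via Leibniz's rule. After reindexing, the required vanishing for $L-l_j+1 \le k \le L$ reduces to the identities $\sum_i \sigma_i\,i(i-1)\cdots(i-r+1)\,\alpha_j^{i} = 0$ for $0 \le r \le l_j-1$, which follow immediately from Lemma \ref{Omegajasigma} since those falling factorials span the same subspace as $i^0, i^1, \ldots, i^r$.
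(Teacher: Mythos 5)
Your proposal is correct and follows essentially the same route as the paper: the Laplace-transform representation $A_{\overline{l},0}(t,\overline\alpha)=t^{L+1}\int_0^\infty e^{-tu}\Omega(u,\overline\alpha)\,du$, the shift $u\mapsto\alpha_j+v$ after multiplying by $e^{\alpha_j t}$, and the split at $v=0$ into the polynomial part $A_{\overline{l},j}$ and the remainder $R_{\overline{l},j}$ are exactly the paper's steps, with the degree count coming from the order-$l_j$ vanishing of $\Omega$ at $\alpha_j$ and the order bound from analyticity of the finite integral at $t=0$. The closing remark offering a purely formal verification via Lemma \ref{Omegajasigma} is a harmless addition, not a different method.
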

\begin{proof}
First we have
\begin{equation*}
A_{\overline{l},0}(t,\overline\alpha)= \sum_{i=l_0}^{L}t^{L-i} i! \sigma_i\!\left(\overline{l}, \overline{\alpha}\right) =\sum_{i=0}^L t^{L-i}i! \sigma_i\!\left(\overline{l}, \overline{\alpha}\right)= t^{L+1}\sum_{i=0}^L\frac{i!\sigma_i\!\left(\overline{l}, \overline{\alpha}\right)}{t^{i+1}},
\end{equation*}
since $\sigma_i\!\left(\overline{l}, \overline{\alpha}\right)=0$ for $0\leq i<l_0$. Using Laplace transform, we can write this as
\begin{equation}\label{LaplaceA0}
t^{L+1}\sum_{i=0}^L\frac{i!\sigma_i\!\left(\overline{l}, \overline{\alpha}\right)}{t^{i+1}} =t^{L+1}\sum_{i=0}^L\mathcal{L}\left(\sigma_i\!\left(\overline{l}, \overline{\alpha}\right) x^i\right)(t)
=t^{L+1}\int_0^{\infty}e^{-xt}\Omega(x,\overline\alpha)\mathrm{d}x.
\end{equation}
Then
\begin{equation*}
\begin{split}
e^{\alpha t}A_{\overline{l},0}(t,\overline\alpha) &= t^{L+1}\int_0^{\infty}e^{(\alpha-x)t}\Omega(x,\overline\alpha)\mathrm{d}x\\
&= t^{L+1} \int_{0}^{\infty}e^{-yt}\Omega(y+\alpha,\overline\alpha)\mathrm{d}y+t^{L+1} \int_{0}^{\alpha}e^{(\alpha-x)t}\Omega(x,\overline\alpha)\mathrm{d}x.
\end{split}
\end{equation*}
We have $\Omega(x,\overline\alpha)=\prod_{j=0}^{m}(\alpha_j-x)^{l_j}$ and consequently
\begin{equation}\label{Omegay+alpha}
\Omega(y+\alpha,\overline\alpha)=\prod_{j=0}^{m}(\alpha_j-\alpha-y)^{l_j} = \Omega\!\left( y, (\alpha_0-\alpha,\ldots ,\alpha_m-\alpha)^T \right).
\end{equation}
By setting $\alpha = \alpha_j, \, j= 1, \ldots, m$, we get the approximation formula
\begin{equation*}
e^{\alpha_j t} A_{\overline{l},0}(t,\overline\alpha) - A_{\overline{l},j}(t,\overline\alpha) = R_{\overline{l},j}(t,\overline\alpha),
\end{equation*}
where
\begin{equation}\label{Akjtalpha}
A_{\overline{l},j} (t,\overline\alpha)= A_{\overline{l},0}\!\left(t, (\alpha_0-\alpha_j,\ldots ,\alpha_m-\alpha_j)^T \right) = t^{L+1}\int_{0}^{\infty}e^{-yt}\Omega(y+\alpha_j,\overline\alpha)\mathrm{d}y
\end{equation}
and
\begin{equation*}
R_{\overline{l},j} (t,\overline{\alpha})= t^{L+1}\int_{0}^{\alpha_j}e^{(\alpha_j-x)t}\Omega(x,\overline\alpha)\mathrm{d}x, \quad j=1,\ldots ,m.
\end{equation*}

Going backwards in \eqref{LaplaceA0} with \eqref{Omegay+alpha} in mind we see that
\begin{equation*}
\begin{split}
A_{\overline{l},j}(t,\overline\alpha) &=t^{L+1}\int_{0}^{\infty}e^{-yt}\Omega(y+\alpha_j,\overline\alpha)\mathrm{d}y\\
&=t^{L+1}\sum_{i=0}^L\mathcal{L}\left(\sigma_i\!\left( \overline{l}, (\alpha_0-\alpha_j,\ldots ,\alpha_m-\alpha_j)^T \right) y^i \right)(t)\\
&= \sum_{i=l_j}^L t^{L-i}i! \sigma_i\!\left( \overline{l}, (\alpha_0-\alpha_j,\ldots ,\alpha_m-\alpha_j)^T \right).
\end{split}
\end{equation*}
Note that the coordinate $\alpha_j-\alpha_j=0$ corresponds to $\beta_0=0$ in Lemma \ref{Omegajasigma}, and consequently we now have $l_j$ in the place of $l_0$ in the definition of $\sigma_i$ \eqref{sigma_i}. Hence
$\sigma_i\!\left( \overline{l}, (\alpha_0-\alpha_j,\ldots ,\alpha_m-\alpha_j)^T \right) = 0$ for $0 \le i < l_j$, and $\deg_t A_{\overline{l},j}(t,\overline\alpha) = L-l_j$. In addition, ord$_{t=0}\, R_{\overline{l},j}(t,\overline\alpha)\ge L+1$ for 
$j=1,\ldots ,m$, since the function
\begin{equation*}
t \mapsto \int_{0}^{\alpha_j}e^{(\alpha_j-x)t}\Omega(x,\overline\alpha)\mathrm{d}x
\end{equation*}
is analytic at the origin.
\end{proof}

\begin{lemma}\label{intpol}
We have $\frac{1}{l_j!}A_{\overline{l},j}(t,\overline{\alpha})\in \mathbb{Z}[t,\alpha_1, \ldots ,\alpha_m]$ for all $j = 0, 1, \ldots, m$.
\end{lemma}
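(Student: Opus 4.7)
The plan is to go back to the explicit expansion of $A_{\overline{l},j}(t,\overline\alpha)$ that was already derived in the proof of Theorem \ref{Padeapprox}, and to read off the integrality directly from the formula, exploiting one elementary arithmetic fact: $l_j! \mid i!$ whenever $i \ge l_j$.

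First, I would recall from the proof of Theorem \ref{Padeapprox} that
\begin{equation*}
A_{\overline{l},j}(t,\overline\alpha) = \sum_{i=l_j}^L t^{L-i}\, i!\, \sigma_i\!\left( \overline{l}, (\alpha_0-\alpha_j,\ldots ,\alpha_m-\alpha_j)^T \right),
\end{equation*}
valid for all $j=0,1,\ldots,m$ (for $j=0$ this is simply \eqref{ALmu0}, since $\alpha_0=0$ in our setup, matching the convention $\beta_0=0$ of Lemma \ref{Omegajasigma}). Next I would write out $\sigma_i$ explicitly via \eqref{sigma_i}, applied with the zero coordinate placed in slot $j$ rather than slot $0$ (this is the re-indexing already flagged at the end of the proof of Theorem \ref{Padeapprox}):
\begin{equation*}
\sigma_i\!\left( \overline{l}, (\alpha_0-\alpha_j,\ldots ,\alpha_m-\alpha_j)^T \right) = (-1)^i \sum_{\substack{i_k \ge 0,\, k\ne j \\ l_j + \sum_{k\ne j} i_k = i}} \prod_{k\ne j}\binom{l_k}{i_k}(\alpha_k-\alpha_j)^{l_k-i_k}.
\end{equation*}

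Plugging this into the formula for $A_{\overline{l},j}$ and dividing by $l_j!$ gives
\begin{equation*}
\frac{A_{\overline{l},j}(t,\overline\alpha)}{l_j!} = \sum_{i=l_j}^L (-1)^i t^{L-i}\, \frac{i!}{l_j!} \sum_{\substack{i_k \ge 0,\, k\ne j \\ l_j + \sum_{k\ne j} i_k = i}} \prod_{k\ne j}\binom{l_k}{i_k}(\alpha_k-\alpha_j)^{l_k-i_k}.
\end{equation*}
Now I would simply observe that each factor is integral: $i!/l_j! = (l_j+1)(l_j+2)\cdots i \in \Z$ because $i \ge l_j$ in the sum; each $\binom{l_k}{i_k}$ is in $\Z$; and each $(\alpha_k-\alpha_j)^{l_k-i_k}$ lies in $\Z[\alpha_0,\alpha_1,\ldots,\alpha_m]$. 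Using the convention $\alpha_0=0$ (inherited from Lemma \ref{Omegajasigma}), the contribution from the index $k=0$ reduces to $(-\alpha_j)^{l_0-i_0} \in \Z[\alpha_j]$, so the whole product lies in $\Z[\alpha_1,\ldots,\alpha_m]$, and the conclusion $\frac{1}{l_j!}A_{\overline{l},j}(t,\overline\alpha)\in \Z[t,\alpha_1,\ldots ,\alpha_m]$ follows.

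There is really no serious obstacle here: the entire content is that the summation index runs over $i \ge l_j$, which is exactly what makes the denominator $l_j!$ harmless. The only point requiring a moment of care is the bookkeeping in the re-indexed version of \eqref{sigma_i} (making sure that for $j\ge 1$ the role of $l_0$ in Lemma \ref{Omegajasigma} is played by $l_j$, so that the lower summation bound is $i\ge l_j$ rather than $i\ge l_0$); once that is written out, the inclusion is immediate term by term.
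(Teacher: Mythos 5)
Your proof is correct. For $j=0$ it coincides with the paper's argument; for $j\ge 1$ it takes a genuinely different (though parallel) route. The paper does not use the closed form $A_{\overline{l},j}(t,\overline\alpha)=\sum_{i= l_j}^{L}t^{L-i}\,i!\,\sigma_i\bigl(\overline l,(\alpha_0-\alpha_j,\ldots,\alpha_m-\alpha_j)^T\bigr)$ for $j\ge1$; instead it expands the product $A_{\overline{l},0}(t,\overline\alpha)e^{\alpha_j t}=\sum_N r_N t^N$ and observes via \eqref{AF-A} that the coefficients of $A_{\overline{l},j}$ are exactly the $r_N$ with $N\le L-l_j$, the divisibility then coming from $\frac{(L-h)!}{l_j!\,n!}\in\Z$ whenever $l_j+n\le L-h$, which is forced by $N=h+n\le L-l_j$. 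Your version buys a uniform treatment of all $j$ from a single formula and isolates the one arithmetic fact $l_j!\mid i!$ for $i\ge l_j$; the cost is that you must justify the permuted form of \eqref{sigma_i} with the zero coordinate in slot $j$, which you correctly flag and which does follow from the symmetry of the product $\Omega$ (the paper itself makes this re-indexing remark at the end of the proof of Theorem \ref{Padeapprox}, so you are entitled to it). The paper's version avoids that re-indexing at the cost of routing through the exponential series. Both arguments rest on the same underlying point, namely that the summation starts at $i\ge l_j$ (equivalently $L-h\ge l_j+n$), which neutralizes the denominator $l_j!$; and both tacitly use $\alpha_0=0$ so that the coefficients land in $\Z[\alpha_1,\ldots,\alpha_m]$ rather than $\Z[\alpha_0,\ldots,\alpha_m]$ --- a point you make explicit and the paper leaves implicit.
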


\begin{proof}
In the case $j=0$ we have
\begin{equation*}
\frac{1}{l_0!} A_{\overline{l},0}(t,\overline{\alpha})= 
\sum_{i=l_0}^{L}t^{L-i}\sigma_{i}\!\left(\overline l,\overline\alpha \right)\! \frac{i!}{l_0!}
\end{equation*}
by \eqref{ALmu0}. The claim clearly holds due to the definition of $\sigma_i$.

Next write
\begin{equation*}
A_{\overline{l},0} (t,\overline{\alpha}) e^{\alpha_j t} = \sum_{N=0}^\infty r_N t^N,
\end{equation*}
where
\begin{equation}\label{rN}
r_N = \sum_{N=h+n} \frac{\sigma_{L-h}\!\left(\overline l,\overline\alpha \right)\! (L-h)!}{n!} \alpha_j^n.
\end{equation}
By \eqref{AF-A} it is sufficient to show that
$\frac{r_N}{l_j!} \in \Z[\alpha_1, \ldots ,\alpha_m]$ for $N = 0, \ldots, L - l_j$. By \eqref{rN} we have
\begin{equation*}
\frac{1}{l_j!} r_N = \sum_{N=h+n} \sigma_{L-h}\!\left(\overline l,\overline\alpha \right)\! \frac{(L-h)!}{l_j! n!} \alpha_j^n,
\end{equation*}
where $h+n = N \le L - l_j$ implies $l_j + n \le L-h$, thus giving the result.
\end{proof}

%DETERMINANTTI
\section{Determinant}\label{Determinant}

In order to fulfil the determinant condition \eqref{DET} we choose
\begin{equation}\label{l^k}
\overline{l}^{(k)} = (l, l, \ldots, l-1, \ldots, l)^T, \quad k=0, 1, \ldots, m,
\end{equation}
i.e. $l_i = l$ for $i=0, 1, \ldots, k-1, k+1, \ldots, m$, and $l_k=l-1$. Now $L=(m+1)l-1$.
Then we write
\begin{equation}\label{Bkjt}
\begin{cases}
A^*_{k,j}(t) := A_{\overline{l}^{(k)},j}(t,\overline{\alpha}),\quad j=0,1, \ldots ,m;\\
R^*_{k,j}(t) := R_{\overline{l}^{(k)},j}(t,\overline{\alpha}),\quad j=1, \ldots ,m,\\
\end{cases}
\end{equation}
for all $k=0,1, \ldots ,m$.

The non-vanishing of the determinant $\Delta$ follows from the next well-known lemma 
(see for example Mahler \cite[p. 232]{MAHLER2} or Waldschmidt \cite[p. 53]{WALDSCHMIDT}).

\begin{lemma}\label{Lemma5.1}
There exists a constant $c \neq 0$ such that
$$
\Delta (t) =
\begin{vmatrix}
A^*_{0,0}(t)  &A^*_{0,1}(t) & \ldots & A^*_{0,m}(t)\\
A^*_{1,0}(t)  &A^*_{1,1}(t) & \ldots & A^*_{1,m}(t)\\
\vdots  & \vdots &  \ddots & \vdots \\
A^*_{m,0}(t)  &A^*_{m,1}(t) & \ldots & A^*_{m,m}(t)\\
\end{vmatrix}
= ct^{m(m+1)l}.
$$
\end{lemma}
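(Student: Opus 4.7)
The plan is to exploit the Hermite-Padé identities of Theorem \ref{Padeapprox} through column operations, then compare an order-at-$t=0$ estimate with a degree estimate. Specifically, for each $j=1,\ldots,m$, I would subtract $e^{\alpha_j t}$ times the $0$-th column of the matrix in Lemma \ref{Lemma5.1} from its $j$-th column; this elementary column operation leaves $\Delta(t)$ unchanged, and by \eqref{AF-A} and \eqref{Bkjt} the new $(k,j)$-entry is precisely $-R^*_{k,j}(t)$. Expanding the transformed determinant by the Leibniz formula, each nonzero summand is a product of one factor $A^*_{k,0}(t)$ (with $\mathrm{ord}_{t=0}\ge 0$) and $m$ factors $R^*_{k,j}(t)$, each of which has $\mathrm{ord}_{t=0}\ge L+1=(m+1)l$ by Theorem \ref{Padeapprox}. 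Thus $\mathrm{ord}_{t=0}\Delta(t)\ge m(m+1)l$.

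Next I would return to the original form of the determinant to bound its degree in $t$. By \eqref{l^k} and Theorem \ref{Padeapprox},
\[
\deg_t A^*_{k,j}(t) \;=\; L-l^{(k)}_j \;=\; \begin{cases} ml, & j=k,\\ ml-1, & j\ne k. \end{cases}
\]
For a permutation $\sigma$ of $\{0,\ldots,m\}$ with $F$ fixed points, the Leibniz summand $\mathrm{sgn}(\sigma)\prod_k A^*_{k,\sigma(k)}(t)$ has degree at most $F\cdot ml+(m+1-F)(ml-1)=m(m+1)l-(m+1-F)$, reaching the value $m(m+1)l$ only when $F=m+1$, i.e.\ at $\sigma=\mathrm{id}$. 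Consequently $\deg_t\Delta(t)\le m(m+1)l$, and the coefficient of $t^{m(m+1)l}$ is $\prod_{k=0}^{m}(\text{leading coefficient of }A^*_{k,k}(t))$; this product is nonzero because Theorem \ref{Padeapprox} asserts the degrees of $A^*_{k,k}(t)$ exactly, so each factor is nonzero.

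Combining the two bounds, $\Delta(t)$ is a polynomial of degree at most $m(m+1)l$ whose Taylor series at $t=0$ starts no earlier than $t^{m(m+1)l}$, forcing $\Delta(t)=ct^{m(m+1)l}$ with $c\ne 0$ by the leading-coefficient computation above. The only step that requires careful bookkeeping is the uniqueness part of the degree count---verifying that only the identity permutation can attain the top degree---which is the standard combinatorial observation underlying the argument in \cite{MAHLER2} and \cite{WALDSCHMIDT}.
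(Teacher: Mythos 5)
Your proposal is correct and follows the same two-step strategy as the paper: bound $\operatorname{ord}_{t=0}\Delta(t)$ from below via the column operations that replace the $j$-th column by $-R^*_{k,j}(t)$, and bound $\deg_t\Delta(t)$ from above using the entry degrees from Theorem \ref{Padeapprox}. You present the steps in the opposite order and spell out, via the fixed-point count in the Leibniz expansion, why only the identity permutation contributes to the top-degree coefficient — a detail the paper leaves implicit when it asserts $\deg_t\Delta(t)=m(m+1)l$ with leading coefficient the product of the diagonal leading coefficients — but the underlying argument is the same.
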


\begin{proof}
According to Theorem \ref{Padeapprox} and the equations in \eqref{Bkjt}, the degrees of the entries of the matrix defining $\Delta$ are
$$
\begin{pmatrix}
ml & ml-1 & \ldots & ml-1\\
ml-1 & ml & \ldots & ml-1\\
\vdots  & \vdots &  \ddots & \vdots \\
ml-1 & ml-1 & \ldots & ml\\
\end{pmatrix}_{\!(m+1) \times (m+1)}.
$$
We see that $\deg_t \Delta (t) = (m+1)ml$ and the leading coefficient $c$ is a product of the leading coefficients of 
$A^*_{0,0} (t), A^*_{1,1}(t), \ldots, A^*_{m,m}(t)$, which are non-zero.

On the other hand, column operations yield
$$
\Delta (t) =
\begin{vmatrix}
A^*_{0,0} (t) &-R^*_{0,1} (t) & \ldots & -R^*_{0,m} (t)  \\
A^*_{1,0} (t) &-R^*_{1,1} (t) & \ldots & -R^*_{1,m} (t)  \\
\vdots  & \vdots &  \ddots & \vdots \\
A^*_{m,0} (t) &-R^*_{m,1}(t) & \ldots & -R^*_{m,m} (t)   \\
\end{vmatrix},
$$
as $R^*_{k,j}(t)=e^{\alpha_j t} A^*_{k,0} (t) - A^*_{k,j}(t)$. 
By Theorem \ref{Padeapprox}, the order of each element in columns $1, \ldots, m$ is at least $L+1 = (m+1)l$. 
Therefore $\underset{t=0}{\ord} \Delta (t) \ge m(m+1)l$.
\end{proof}

%YHTEISET TEKIJÄT
\section{Common factors}\label{Commonfactors}

From now on we set $\alpha_j=j$ for $j=0,1, \ldots ,m$ and denote 
\begin{equation*}\label{}
B^*_{k, j} (t) := \frac{1}{(l-1)!} A^*_{k, j} (t)
\end{equation*}
for $j=0,1, \ldots, m$, $k=0, 1, \ldots, m$ and 
\begin{equation*}\label{}
L^*_{k, j} (t) := \frac{1}{(l-1)!} R^*_{k, j} (t)
\end{equation*}
for $j=1, \ldots, m$, $k=0, 1, \ldots, m$. 
Then, by Theorem \ref{Padeapprox}, we have a system of linear forms 
\begin{equation}\label{systemlinearforms}
B^*_{k,0} (t) e^{\alpha_j t}+ B^*_{k,j} (t) = L^*_{k,j} (t), \quad j=1, \ldots ,m;\ k=0, 1, \ldots, m,
\end{equation}
where
\begin{equation}\label{Bkjtl-1}
B^*_{k, j} (t) = \frac{t^{L+1}}{(l-1)!}\int_{0}^{\infty}e^{-yt} (0-j-y)^l (1-j-y)^l \cdots (k-j-y)^{l-1} \cdots (m-j-y)^l\mathrm{d}y
\end{equation}
for $j, k=0, 1, \ldots, m$, and
\begin{equation}\label{Lkjtl-1}
L^*_{k, j} (t) = \frac{t^{L+1}}{(l-1)!} \int_{0}^{j}e^{(j-x)t} (0-x)^l (1-x)^l \cdots (k-x)^{l-1} \cdots (m-x)^l \mathrm{d}x
\end{equation}
for $j=1, \ldots, m$, $k=0, 1, \ldots, m$.

Further, by Lemma \ref{intpol} holds $B^*_{k, j} (t)\in \mathbb{Z}[t]$ for all $j = 0, 1, \ldots, m$, $k=0, 1, \ldots, m$.
Next  we try to find a common factor from the integer coefficients of the new polynomials $B^*_{k,j} (t)$.

Let $m \in \Z_{\ge 1}$ in this section. We will also need the  $p$-adic valuation $v_p(n!)= \sum_{i=1}^{\infty} \left\lfloor \frac{n}{p^i}\right\rfloor$ and its well-known property
\begin{equation}\label{vpkertoma}
\frac{n}{p-1}-\frac{\log n}{\log p}-1\le v_p(n!)\le\frac{n-1}{p-1}
\end{equation}
(for reference, see \cite{KTT}).

\begin{theorem}\label{Casej=0}
For $k=0,1, \ldots, m$, we have
\[
\left( \prod_{\substack{p\le m\\ p\in \mathbb{P}}} p^{\left\lfloor \frac{m}{p}\right\rfloor v_p(l!) - v_p(l)} \right)^{-1} \cdot B^*_{k,0}(t) \in \Z[t].
\]
\end{theorem}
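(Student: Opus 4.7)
The coefficient of $t^{L-N}$ in $B^*_{k,0}(t)=A^*_{k,0}(t)/(l-1)!$ equals $c_N N!/(l-1)!$, where $c_N:=[y^N]\prod_{j=0}^m(j-y)^{l_j}$: this follows by expanding $A^*_{k,0}(t)=t^{L+1}\int_0^\infty e^{-yt}\prod_j(j-y)^{l_j}\,dy$ via the Laplace transform identity $\int_0^\infty e^{-yt}y^N\,dy=N!/t^{N+1}$ already used in Theorem~\ref{Padeapprox}. It therefore suffices to prove, for each prime $p\le m$, the local $p$-adic inequality
$$v_p\!\left(\frac{c_N N!}{(l-1)!}\right)\;\ge\;\left\lfloor\frac{m}{p}\right\rfloor v_p(l!)-v_p(l)\qquad\text{for every }l_0\le N\le L.$$

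My approach is to isolate the factor $(-y)^{l_0}$ and substitute $u=-y$ to reduce the analysis to coefficient extraction from $\prod_{j=1}^m(u+j)^{l_j}$. For $k=0$ (where $l_0=l-1$ and $l_j=l$ for $j\ge 1$), the Pochhammer identity $\prod_{j=1}^m(u+j)^l=((u+1)_m)^l$ combined with $N!/(l-1)!=n!\binom{n+l-1}{l-1}$ (where $n:=N-l+1$) gives the clean form
$$\frac{c_N N!}{(l-1)!}=\pm\,n!\binom{n+l-1}{l-1}\cdot[u^n]\prod_{j=1}^m(u+j)^l.$$
For $k\ge 1$ the analogous formula holds with $\prod_j(u+j)^{l_j}=((u+1)_m)^l/(u+k)$, still an integer polynomial since $(u+k)$ divides $(u+1)_m$. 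The $p$-adic analysis now splits into two complementary lower bounds. First, Legendre's formula $v_p(n!)=(n-s_p(n))/(p-1)$ combined with Kummer's theorem yields $v_p\bigl(n!\binom{n+l-1}{l-1}\bigr)=(n+s_p(l-1)-s_p(N))/(p-1)$. Second, the termwise expansion $[u^n]\prod_j(u+j)^l=\sum_{n_1+\cdots+n_m=n}\prod_j\binom{l}{n_j}j^{l-n_j}$ provides the bound $v_p(\mathrm{term})\ge\sum_j v_p\!\left(\binom{l}{n_j}\right)+\sum_{p\mid j,\,1\le j\le m}(l-n_j)v_p(j)$; inserting Kummer's $v_p\!\left(\binom{l}{k}\right)=(s_p(k)+s_p(l-k)-s_p(l))/(p-1)$ and the elementary estimate $\sum_{p\mid j,\,1\le j\le m}(l-n_j)\ge\lfloor m/p\rfloor l-n$ (since the $n_j$'s sum to $n$ and only $\lfloor m/p\rfloor$ indices are $p$-multiples) supplies the second piece.

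Combining the two contributions, applying the identity $s_p(l-1)=s_p(l)-1+(p-1)v_p(l)$ and the subadditivity $s_p(ab)\le a\,s_p(b)$, reduces the target to a digit-sum comparison closed using the trivial estimate $s_p(N)\le(p-1)(1+\lfloor\log_p N\rfloor)$. The main obstacle I anticipate is the regime where $l$ has exceptional base-$p$ structure (for instance $l=p^a+1$, with $s_p(l)=2$ and $v_p(l)=0$): here a naive estimate that drops the Kummer contributions $v_p\!\left(\binom{l}{n_j}\right)$ falls short by approximately $\log_p l$ for certain values of $N$, and one must carefully track these binomial valuations via Kummer's formula in the worst-case choice of $(n_j)$ to recover the missing divisibility.
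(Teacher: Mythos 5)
Your setup is correct and mirrors the paper's starting point: extracting the coefficient of $t^{L-N}$ via the Laplace identity, reducing to $[u^n]\prod_{j=1}^m(u+j)^{l_j}$, and the factorization $N!/(l-1)!=n!\binom{n+l-1}{l-1}$ for $k=0$. But the grouping you choose afterward is strictly weaker than the one the paper uses, and you yourself flag the consequence: your sketch admits that when $l$ has exceptional base-$p$ structure, the estimate ``falls short by approximately $\log_p l$'' and that one must carefully track the Kummer valuations of the $\binom{l}{n_j}$'s. That is a genuine gap, not a cosmetic worry; dropping those valuations does in fact lose the claimed divisibility, and your plan does not supply the missing analysis.

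The gap disappears if you regroup the way the paper does. A single term in the sum over $(n_1,\dots,n_m)$ with $\sum n_j=n$ and $h_j:=l_j-n_j$ can be written, using $\binom{l}{n_j}=\frac{l!}{n_j!\,h_j!}$, as
\[
n!\binom{n+l-1}{l-1}\prod_j\binom{l}{n_j}j^{h_j}
=\underbrace{\frac{(n+l-1)!}{(l-1)!\,n_1!\cdots n_m!}}_{\in\ \mathbb Z}\ \cdot\ \prod_j\frac{l_j!}{h_j!}\,j^{h_j}.
\]
The first factor is an integer (a multinomial coefficient, since $(l-1)+\sum n_j\le L-r$), so it contributes $\ge 0$ to $v_p$; the second factor involves the factorial quotients $l_j!/h_j!$ rather than the binomials $\binom{l_j}{h_j}$, and for these the bound is a one-liner: for each $j$ with $p\mid j$ one has $v_p(l_j!)-v_p(h_j!)+h_jv_p(j)\ge v_p(l_j!)$ directly from $v_p(h_j!)\le h_j/(p-1)\le h_jv_p(j)$, and one may simply drop the $j$ with $p\nmid j$. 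Summing over the $\lfloor m/p\rfloor$ indices divisible by $p$, and accounting for the single index $k$ where $l_k=l-1$, gives $\ge\lfloor m/p\rfloor v_p(l!)-v_p(l)$ with no Kummer, Legendre, or digit-sum machinery needed. In short: your decomposition hides positive $p$-adic valuation inside the binomials $\binom{l}{n_j}$ that you then struggle to recover; pulling out the multinomial integer and keeping the full $l_j!/h_j!$ avoids the problem entirely.
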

\begin{proof}
Let us start by writing the polynomial $B^*_{k,0}(t)$ from \eqref{Bkjtl-1} in a different way, using the representation \eqref{ALmu0}:
\begin{equation}\label{Bk0t}
\begin{split}
B^*_{k,0}(t) &= \frac{1}{(l-1)!} A_{\overline{l}^{(k)},0} \left(t,(0, 1, \ldots, m)^T \right) = 
\sum_{i=l_0}^{L}t^{L-i}\sigma_{i} \left( \overline{l}^{(k)}, (0, 1, \ldots, m)^T \right) \cdot \frac{i!}{(l-1)!} \\
&= \sum_{r=0}^{L-l_0}t^{r}\sigma_{L-r}\left( \overline{l}^{(k)}, (0, 1, \ldots, m)^T \right) \cdot \frac{(L-r)!}{(l-1)!},
\end{split}
\end{equation}
where $L=(m+1)l -1$ and, by \eqref{sigma_i},
\begin{equation*}
\begin{split}
\sigma_{L-r}\left( \overline{l}^{(k)}, (0, 1, \ldots, m)^T \right)= \;&(-1)^{L-r}\sum_{h_1+ \ldots +h_m=r} \frac{l_1!}{(l_1-h_1)!h_1!} \cdot \frac{l_2!}{(l_2-h_2)!h_2!} \cdots \\
&\frac{l_m!}{(l_m-h_m)!h_m!} 1^{h_1}2^{h_2}\cdots m^{h_m}.
\end{split}
\end{equation*}
So
\begin{equation*}
%\begin{split}
B^*_{k,0}(t) = \sum_{r=0}^{L-l_0} t^{r} (-1)^{L-r} \sum_{h_1+ \ldots +h_m=r} \frac{(L-r)!}{(l-1)! (l_1-h_1)! \cdots (l_m-h_m)!} %\\
\cdot \frac{l_1!}{h_1!}\cdots \frac{l_m!}{h_m!} \cdot 1^{h_1}\cdots m^{h_m}.
%\end{split}
\end{equation*}
Here $\frac{(L-r)!}{(l-1)! (l_1-h_1)! \cdots (l_m-h_m)!} \in\mathbb{Z}$ because
\[
(l-1) + (l_1-h_1) + \ldots + (l_m-h_m) \le l_0 + l_1 + \ldots + l_m - (h_1 + \ldots h_m) = L-r.
\]
So, we may expect some common factors from the terms $\frac{l_1!}{h_1!}\cdots \frac{l_m!}{h_m!} \cdot 1^{h_1}2^{h_2}\cdots m^{h_m}$.

Let $p\leq m$ be a prime number. Now, using \eqref{vpkertoma},
\begin{equation}\label{vpterms}
\begin{aligned}
v_p& \left(\frac{l_1!}{h_1!} \cdot \frac{l_2!}{h_2!} \cdots \frac{l_m!}{h_m!} \cdot 1^{h_1}2^{h_2}\cdots m^{h_m}\right)
= \sum_{i=1}^m \left(v_p(l_i!) - v_p(h_i!) \right) + \sum_{\stackrel{i=1}{p | i}}^m h_i v_p(i)\\
\ge \; &\sum_{\stackrel{i=1}{p | i}}^m \left( v_p(l_i!)+h_i \left(v_p(i)-\frac{1}{p-1}\right) \right)\\
\ge \; &
\begin{cases}
\left( \left\lfloor \frac{m}{p}\right\rfloor -1 \right) v_p(l!) + v_p((l-1)!), &k \in \{ 1, \ldots, m \}; \\
\left\lfloor \frac{m}{p}\right\rfloor v_p(l!), &k=0
\end{cases} \\
\ge \; &\left( \left\lfloor \frac{m}{p}\right\rfloor -1 \right) v_p(l!) + v_p((l-1)!).
\end{aligned}
\end{equation}
Recall from \eqref{l^k} that $l_k = l-1$ while $l_j = l$ for $j \neq k$. Since $v_p(l!) = v_p(l) + v_p((l-1)!),$ the result \eqref{vpterms} can be written as
\begin{equation*}
v_p\left(\frac{l_1!}{h_1!} \cdot \frac{l_2!}{h_2!} \cdots \frac{l_m!}{h_m!} \cdot 1^{h_1}2^{h_2}\cdots m^{h_m}\right) \ge \left\lfloor \frac{m}{p}\right\rfloor v_p(l!) - v_p(l).
\end{equation*}
So, there is a factor
\begin{equation*}
p^{\left\lfloor \frac{m}{p}\right\rfloor v_p(l!) - v_p(l)} \left|\frac{l!}{h_1!}\cdots\frac{l!}{h_m!}\cdot 1^{h_1}2^{h_2}\cdots m^{h_m}\right.,
\end{equation*}
which is a common divisor of all the coefficients of $B^*_{k,0}(t)$. The proof is complete.\end{proof}
Now we need to find a common factor dividing all $B^*_{k,j}(t)$.

\begin{theorem}\label{commonfactorforBkj}
Assume $j\in\{1, \ldots ,m\}$.
Then there exists a positive integer
$$
D_{m,l} :=\prod_{\substack{p\le \frac{m+1}{2}\\ p\in \mathbb{P}}}p^{\nu_p}
$$
with
\[
\nu_p\geq  \left( \left\lfloor \frac{j}{p}\right\rfloor +\left\lfloor \frac{m-j}{p}\right\rfloor \right) v_p((l-1)!),
\]
satisfying
$$
D_{m,l}^{-1} \cdot B^*_{k,j}(t) \in \Z[t]
$$
for all $k=0,1, \ldots ,m$.
\end{theorem}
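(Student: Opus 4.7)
The plan is to mimic the proof of Theorem \ref{Casej=0}, using identity \eqref{Akjtalpha} to rewrite $A^*_{k,j}(t) = A_{\overline{l}^{(k)},0}(t,\, \overline{\alpha} - \alpha_j)$. With $\overline{\alpha} = (0,1,\ldots,m)^T$, the shifted tuple becomes $(-j, 1-j, \ldots, 0, \ldots, m-j)^T$, where the entry $0$ appears at position $j$ with multiplicity $l_j$. Applying formula \eqref{sigma_i}, now with $l_j$ playing the role of $l_0$, I would obtain the expansion
\[
B^*_{k,j}(t) = \sum_{r=0}^{L-l_j} (-1)^{L-r} t^r \sum_{\substack{h_i \ge 0,\, i \ne j \\ \sum_{i \ne j} h_i = r}} \frac{(L-r)!}{(l-1)! \prod_{i \ne j}(l_i - h_i)!} \prod_{i \ne j} \frac{l_i!}{h_i!} (i-j)^{h_i}.
\]
Exactly as in Theorem \ref{Casej=0}, the first multinomial-type factor is an integer (when $k = j$ it is a genuine multinomial coefficient; when $k \ne j$ the denominator degrees sum to $L-1-r$, so one picks up an extra factor $L-r$), and so the task reduces to extracting a common $p$-adic divisor of the products $\prod_{i \ne j} \frac{l_i!}{h_i!}(i-j)^{h_i}$.

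Fix a prime $p \le (m+1)/2$ and set $S_{j,p} := \{ i \in \{0,1,\ldots,m\} \setminus \{j\} : p \mid (i-j)\}$, so that $|S_{j,p}| = \lfloor j/p \rfloor + \lfloor (m-j)/p \rfloor$. Indices $i \notin S_{j,p}$ contribute non-negatively to $v_p$; for $i \in S_{j,p}$, combining $v_p(h_i!) \le h_i/(p-1)$ with $v_p(|i-j|) \ge 1$ yields, just as in \eqref{vpterms},
\[
v_p(l_i!) - v_p(h_i!) + h_i\, v_p(|i-j|) \;\ge\; v_p(l_i!) + h_i\!\left(v_p(|i-j|) - \tfrac{1}{p-1}\right) \;\ge\; v_p(l_i!).
\]
Summing over $i \in S_{j,p}$ produces the lower bound $\sum_{i \in S_{j,p}} v_p(l_i!)$ on the $p$-adic valuation of each coefficient of $B^*_{k,j}(t)$.

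The final step is to recast this in the clean form $|S_{j,p}|\,v_p((l-1)!)$ uniformly in $k$. Since $v_p(l!) = v_p(l) + v_p((l-1)!)$ and $l_i \in \{l, l-1\}$, the sum equals $|S_{j,p}|\,v_p((l-1)!) + c\,v_p(l)$, with $c = |S_{j,p}|$ if $k \notin S_{j,p}$ and $c = |S_{j,p}|-1$ if $k \in S_{j,p}$. The claimed inequality therefore holds as soon as $|S_{j,p}| \ge 1$, which is really the only delicate point: the hypothesis $p \le (m+1)/2$ forces one of $j, m-j$ to be at least $p$ (otherwise $j, m-j \le p-1$ would yield $m \le 2p-2 < 2p-1 \le m$), so $|S_{j,p}| \ge 1$ is automatic and the proof is complete.
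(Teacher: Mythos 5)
Your proposal follows essentially the same route as the paper: rewrite $B^*_{k,j}$ via the shifted parameters $(\alpha_0-\alpha_j,\ldots,\alpha_m-\alpha_j)$, separate the integer multinomial factor from the product $\prod_{i\ne j}\frac{l_i!}{h_i!}(i-j)^{h_i}$, and bound the $p$-adic valuation of the latter by restricting attention to indices $i$ with $p\mid(i-j)$. Your argument is correct and, if anything, slightly more careful than the paper's in that it explicitly verifies the step $\bigl(\lfloor j/p\rfloor+\lfloor(m-j)/p\rfloor\bigr)v_p(l!)-v_p(l)\ge\bigl(\lfloor j/p\rfloor+\lfloor(m-j)/p\rfloor\bigr)v_p((l-1)!)$ by showing $|S_{j,p}|\ge 1$ whenever $p\le(m+1)/2$, a point the paper leaves implicit.
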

\begin{proof}
From our assumption $\alpha_i = i, \, i=1, \ldots, m$, and equations \eqref{Akjtalpha} and \eqref{Bk0t} it follows
\begin{equation*}
\begin{split}
B^*_{k,j}(t,\overline\alpha) &=\frac{1}{(l-1)!} A_{\overline{l}^{(k)},0}\left(t,(0-j, 1-j, \ldots ,m-j)^T \right) \\
&=\sum_{r=0}^{L-l_j}t^{r}\sigma_{L-r} \left(\overline{l}^{(k)}, (0-j, 1-j, \ldots ,m-j)^T \right) \frac{(L-r)!}{(l-1)!},
\end{split}
\end{equation*}
where
\begin{equation*}
\begin{split}
\sigma_{L-r}& \left(\overline{l}^{(k)}, (0-j, 1-j, \ldots ,m-j)^T \right) \\
= \;&(-1)^{L-r}\sum_{h_0+ \ldots + h_{j-1} + h_{j+1} \ldots + h_m=r} \frac{l_0!}{(l_0-h_0)!h_0!} \cdots \frac{l_{j-1}!}{(l_{j-1}-h_{j-1})!h_{j-1}!} \\
&\cdot \frac{l_{j+1}!}{(l_{j+1}-h_{j+1})!h_{j+1}!} \cdots \frac{l_m!}{(l_m-h_m)!h_m!}\\ &\cdot (0-j)^{h_0} (1-j)^{h_1}\cdots (-1)^{h_{j-1}}1^{h_{j+1}}\cdots(m-j)^{h_m}.
\end{split}
\end{equation*}

So
\begin{equation*}
\begin{aligned}
B^*_{k,j}(t) = &\sum_{r=0}^{L-l_j}t^{r}(-1)^{L-r} \sum_{h_0+ \ldots + h_{j-1} + h_{j+1} \ldots + h_m=r} \frac{(L-r)!}{(l-1)!} \\
&\cdot\frac{1}{(l_0-h_0)! \cdots (l_{j-1}-h_{j-1})! \cdot (l_{j+1}-h_{j+1})! \cdots (l_m-h_m)!} \\
&\cdot \frac{l_0!}{h_0!}\cdots \frac{l_{j-1}!}{h_{j-1}!} \cdot \frac{l_{j+1}!}{h_{j+1}!} \cdots \frac{l_m!}{h_m!} (0-j)^{h_0}(1-j)^{h_1}\cdots (-1)^{h_{j-1}}1^{h_{j+1}}\cdots(m-j)^{h_m}.
\end{aligned}
\end{equation*}
As before, we may expect some common factors from the terms
\[
%\begin{split}
T_j := \frac{l_0!}{h_0!}\cdots \frac{l_{j-1}!}{h_{j-1}!} \cdot \frac{l_{j+1}!}{h_{j+1}!} \cdots \frac{l_m!}{h_m!} 
\cdot (0-j)^{h_0}(1-j)^{h_1}\cdots (-1)^{h_{j-1}}1^{h_{j+1}}\cdots(m-j)^{h_m}.
%\end{split}
\]

Let $p \le \frac{m+1}{2}$. With considerations similar to those in \eqref{vpterms}, we get
\begin{equation*}
\begin{split}
v_p (T_j) &\ge
\begin{cases}
\left( \left\lfloor \frac{j}{p} \right\rfloor - 1 \right) v_p (l!) + v_p ((l-1)!) + \left\lfloor \frac{m-j}{p} \right\rfloor v_p (l!), &k \in \{ 0, 1, \ldots j-1 \};\\
\left\lfloor \frac{j}{p} \right\rfloor v_p (l!) + \left( \left\lfloor \frac{m-j}{p} \right\rfloor -1 \right) v_p (l!) + v_p ((l-1)!), &k \in \{ j+1, \ldots m-j \};\\
\left\lfloor \frac{j}{p}\right\rfloor v_p(l!)+\left\lfloor \frac{m-j}{p}\right\rfloor v_p(l!), &k=j
\end{cases}\\
&\ge \left( \left\lfloor \frac{j}{p}\right\rfloor +\left\lfloor \frac{m-j}{p}\right\rfloor \right) v_p(l!) - v_p (l)\\
&\ge \left( \left\lfloor \frac{j}{p}\right\rfloor +\left\lfloor \frac{m-j}{p}\right\rfloor \right) v_p((l-1)!).
\end{split}
\end{equation*}
\end{proof}

Combining Theorems \ref{Casej=0} and \ref{commonfactorforBkj} gives us the complete result:

\begin{corollary}
For all $k, j = 0, 1, \ldots, m$ we have
$$
D_{m,l}^{-1} \cdot B^*_{k,j}(t) \in \Z[t].
$$ 
\end{corollary}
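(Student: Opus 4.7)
The corollary consolidates Theorems~\ref{Casej=0} and~\ref{commonfactorforBkj}. For each $j \in \{1, \ldots, m\}$ (and every $k$) the divisibility $D_{m,l}^{-1} B^*_{k,j}(t) \in \Z[t]$ is literally the conclusion of Theorem~\ref{commonfactorforBkj}, so the only remaining case is $j = 0$.

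For $j = 0$, my plan is to deduce the required divisibility from Theorem~\ref{Casej=0}, which provides the stronger divisor
\[ \prod_{\substack{p \le m\\ p \in \mathbb{P}}} p^{\lfloor m/p\rfloor v_p(l!) - v_p(l)} \]
of every coefficient of $B^*_{k,0}(t)$. Since $D_{m,l}$ is supported on the sub-range $p \le (m+1)/2 \le m$, the set of primes is no issue, and it suffices to compare the prime-exponents term by term: we must check that the exponent $\nu_p$ of $p$ in $D_{m,l}$ is dominated by $\lfloor m/p\rfloor v_p(l!) - v_p(l)$. Taking $\nu_p$ as the uniform choice $\min_{0 \le j \le m}\{\lfloor j/p\rfloor + \lfloor(m-j)/p\rfloor\} v_p((l-1)!)$ (consistent with Theorem~\ref{commonfactorforBkj} and with the formula~\eqref{kappavisible} for $\kappa_m$), and bounding this minimum above by its value at $j = 0$, namely $\lfloor m/p\rfloor v_p((l-1)!)$, it suffices to verify
\[ \lfloor m/p\rfloor v_p((l-1)!) \le \lfloor m/p\rfloor v_p(l!) - v_p(l). \]

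Writing $v_p(l!) = v_p((l-1)!) + v_p(l)$ reduces this inequality to $v_p(l) \le \lfloor m/p\rfloor v_p(l)$, which is immediate from $\lfloor m/p\rfloor \ge 1$ (as $p \le m$). There is no genuine obstacle in this proof beyond careful bookkeeping: the small discrepancy between the exponent produced by Theorem~\ref{Casej=0} and the natural $j=0$ extrapolation of the exponent in Theorem~\ref{commonfactorforBkj} is absorbed by the trivial inequality $\lfloor m/p\rfloor \ge 1$ for primes in the support of $D_{m,l}$.
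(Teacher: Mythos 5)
Your argument is correct and follows the same route as the paper, which simply asserts that the corollary follows by combining Theorems \ref{Casej=0} and \ref{commonfactorforBkj} without writing out the exponent comparison. Your explicit verification for $j=0$ --- bounding $\nu_p$ by its $j=0$ value $\lfloor m/p\rfloor v_p((l-1)!)$ and checking this is at most $\lfloor m/p\rfloor v_p(l!)-v_p(l)$ via $v_p(l!)=v_p((l-1)!)+v_p(l)$ and $\lfloor m/p\rfloor\ge 1$ --- is exactly the bookkeeping the paper leaves implicit.
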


\begin{theorem}\label{kappatheorem}
Let $l\geq s(m)e^{s(m)}$. Then the common factor $D_{m,l}$ satisfies the bound
\begin{equation}\label{cfestimate}
D_{m,l}\geq e^{\kappa_m ml},
\end{equation}
where
\begin{equation}\label{kappaarvio}
\kappa_m := \frac{1}{m} \sum_{\substack{p\le \frac{m+1}{2}\\ p\in \mathbb{P}}} 
\min_{0 \le j \le m} \left\{ \left\lfloor\frac{j}{p}\right\rfloor+\left\lfloor\frac{m-j}{p}\right\rfloor \right\} 
\frac{\log p}{p-1}\,w_p\!\left(s(m)e^{s(m)}\right),	
\end{equation}
and  $w_p(x):= 1-\frac{p}{x}- \frac{p-1}{\log p}\frac{\log x}{x}$.  Further,
\begin{equation}\label{simplifynum}
\kappa_m\ge  w_{\frac{m+1}{2}}\!\left(s(m)e^{s(m)}\right) \frac{1}{m} \sum_{\substack{p\le \frac{m+1}{2}\\ p\in \mathbb{P}}} 
              \left(\left\lfloor\frac{m+1}{p}\right\rfloor-1\right) 
              \frac{\log p}{p-1},
\end{equation}
and asymptotically we have
\begin{equation}\label{kappaasymptotically}
\lim_{m \to \infty} \kappa_m = \kappa= \sum_{p\in\mathbb{P}} \frac{\log p}{p(p-1)}=0.75536661083\dots.
\end{equation}
\end{theorem}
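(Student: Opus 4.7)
\textbf{Proof plan for Theorem \ref{kappatheorem}.} My plan is to substitute the standard lower bound for $v_p((l-1)!)$ from \eqref{vpkertoma} into the divisibility result of Theorem \ref{commonfactorforBkj}, repackage the result in terms of $w_p$, and then exploit monotonicity in $l$ and in $p$ to derive both the main bound \eqref{cfestimate} and the simplification \eqref{simplifynum}.

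The first task is to pin down the exponent $\nu_p$ in $D_{m,l}=\prod_p p^{\nu_p}$. Combining Theorems \ref{Casej=0} and \ref{commonfactorforBkj}, I can take
\[
\nu_p \ge \min_{0\le j\le m}\!\left\{\lfloor j/p\rfloor + \lfloor(m-j)/p\rfloor\right\} v_p((l-1)!)
\]
for every prime $p \le (m+1)/2$: the $1\le j\le m$ entries are handed to me by Theorem \ref{commonfactorforBkj}, and the $j=0$ entry is supplied by Theorem \ref{Casej=0} after the reduction $\lfloor m/p\rfloor v_p(l!) - v_p(l) \ge \lfloor m/p\rfloor v_p((l-1)!)$, which is valid since $p\le m$. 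I would then feed in the bound $v_p((l-1)!) \ge (l-1)/(p-1) - \log(l-1)/\log p - 1$ from \eqref{vpkertoma}; a short rearrangement using $(l-1)/(p-1)-1 = (l-p)/(p-1)$ and $\log(l-1)\le \log l$ converts this into the clean form
\[
v_p((l-1)!) \ge \frac{l}{p-1}\,w_p(l).
\]

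Next, monotonicity in $l$. Differentiating gives $w_p'(x) = \bigl(p + (p-1)(\log x - 1)/\log p\bigr)/x^2 > 0$ for $x \ge e$, so $w_p(l) \ge w_p(s(m) e^{s(m)})$ whenever $l \ge s(m) e^{s(m)}$. Multiplying $\nu_p$ by $\log p$, summing over $p \le (m+1)/2$, and recognising the factor $ml$ gives $\log D_{m,l} \ge \kappa_m m l$, which is exactly \eqref{cfestimate}.

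The simplification \eqref{simplifynum} requires two further observations. First,
\[
\min_{0\le j\le m}\!\left\{\lfloor j/p\rfloor + \lfloor(m-j)/p\rfloor\right\} \ge \lfloor(m+1)/p\rfloor - 1,
\]
proved by a case split: the minimum always exceeds $\lfloor m/p\rfloor - 1$, and when $p \mid m+1$ every pair of fractional parts $\{j/p\}+\{(m-j)/p\}$ equals $(p-1)/p < 1$, forcing the sum up to $\lfloor m/p\rfloor = \lfloor(m+1)/p\rfloor - 1$. Second, since $p\mapsto(p-1)/\log p$ is increasing for $p\ge 2$, the map $p\mapsto w_p(x)$ is decreasing in $p$ for fixed $x$, so $w_p(s(m)e^{s(m)}) \ge w_{(m+1)/2}(s(m)e^{s(m)})$ for every prime $p \le (m+1)/2$; factoring this constant out of the sum yields \eqref{simplifynum}. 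For the asymptotic \eqref{kappaasymptotically} I would pass to the limit termwise: $w_p(s(m)e^{s(m)}) \to 1$ and $m^{-1}\min_j\{\lfloor j/p\rfloor + \lfloor(m-j)/p\rfloor\}\to 1/p$ as $m\to\infty$, while the convergent majorant $\log p / (p(p-1))$ legitimises exchanging limit and summation, delivering $\kappa = \sum_p \log p/(p(p-1))$. I expect the floor-function minimum inequality in the simplification step to be the only non-mechanical part; everything else is careful repackaging of \eqref{vpkertoma} through the $w_p$-function.
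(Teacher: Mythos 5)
Your proposal is correct and follows essentially the same line of reasoning as the paper: feed the lower bound on $v_p((l-1)!)$ from \eqref{vpkertoma} into the exponent $\nu_p$ from Theorem \ref{commonfactorforBkj}, repackage as $\frac{l\log p}{p-1}w_p(\cdot)$, use monotonicity of $w_p$ in its argument to replace $l$ by $s(m)e^{s(m)}$, and use monotonicity in the prime index together with $\lfloor j/p\rfloor+\lfloor(m-j)/p\rfloor \ge \lfloor(m+1)/p\rfloor-1$ to get \eqref{simplifynum}. The only cosmetic differences are that the paper proves the floor inequality by an explicit division-algorithm computation rather than a case split on $p\mid m+1$, and obtains \eqref{kappaasymptotically} by sandwiching $\kappa_m$ between two sums tending to $\kappa$ rather than by dominated convergence; both are equivalent.
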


\begin{proof} 
We begin with the estimate of Theorem \ref{commonfactorforBkj}:
\begin{equation*}
\nu_p \ge \left(\left\lfloor\frac{j}{p}\right\rfloor+\left\lfloor\frac{m-j}{p}\right\rfloor\right)v_p((l-1)!).
\end{equation*}
Then
\begin{equation*}
\begin{split}
\prod_{p\le \frac{m+1}{2}} p^{\nu_p} &\geq \prod_{p\le \frac{m+1}{2}} p^{\left(\left\lfloor\frac{j}{p}\right\rfloor+\left\lfloor\frac{m-j}{p}\right\rfloor\right)v_p((l-1)!)} \\
&= \exp \left( \sum_{p\le \frac{m+1}{2}} \left(\left\lfloor\frac{j}{p}\right\rfloor+\left\lfloor\frac{m-j}{p}\right\rfloor\right)v_p((l-1)!)\log p \right).
\end{split}
\end{equation*}
We use the estimate
\begin{equation*}
\left\lfloor\frac{j}{p}\right\rfloor+\left\lfloor\frac{m-j}{p}\right\rfloor \ge \min_{0 \le j \le m} \left\{ \left\lfloor\frac{j}{p}\right\rfloor+\left\lfloor\frac{m-j}{p}\right\rfloor \right\}
\end{equation*}
since we are estimating a common divisor of all $B^*_{k, j}$. 
Next we use the property \eqref{vpkertoma} and the assumption $l \ge s(m)e^{s(m)}$ in order to estimate $v_p((l-1)!)\log p$:
\begin{equation*}
\begin{split}
v_p((l-1)!)\log p &\ge \left( \frac{l-1}{p-1}-\frac{\log (l-1)}{\log p}-1 \right) \log p \\
&\ge l \frac{\log p}{p-1}\left( 1-\frac{p}{s(m)e^{s(m)}}- \frac{p-1}{\log p}\frac{\log \!\left(s(m)e^{s(m)}\right)}{s(m)e^{s(m)}} \right) .
\end{split}
\end{equation*}
Altogether $\prod_{p\le \frac{m+1}{2}} p^{\nu_p} \ge e^{\kappa_m ml},$ where
\begin{equation*}\label{}
\kappa_m := \frac{1}{m} \sum_{p\le \frac{m+1}{2}} 
\min_{0 \le j \le m} \left\{ \left\lfloor\frac{j}{p}\right\rfloor+\left\lfloor\frac{m-j}{p}\right\rfloor \right\} 
\frac{\log p}{p-1}\left( 1-\frac{p}{s(m)e^{s(m)}}- \frac{p-1}{\log p}\frac{\log \!\left(s(m)e^{s(m)}\right)}{s(m)e^{s(m)}} \right),
\end{equation*}
proving the estimate \eqref{cfestimate}.

Next we study the bound \eqref{simplifynum}.
Let $x\in\mathbb{R}_{>1}$ be fixed, then
\begin{equation}\label{wvahenee}
w_y(x)>w_{z}(x)
\end{equation}
when $2\leq y<z$. To prove \eqref{wvahenee} above we differentiate the function $w_y(x)$:
\begin{equation*}\label{}
\frac{\partial }{\partial y}w_y(x)=-\frac{1}{x}- \frac{1}{\log y}\frac{\log x}{x}+\frac{y-1}{y(\log y)^2}\frac{\log x}{x}
=-\frac{1}{x}-\frac{\log x}{x\log y}\left(1-\left(1-\frac{1}{y}\right)\frac{1}{\log y}\right)<0,
\end{equation*}
since $\log y>1-\frac{1}{y}$ when $y\geq 2$. Next write
\begin{equation}\label{jakoalg}
m+1=hp+\overline{m},\quad j=lp+\overline{j},\quad h,l, \overline{m}, \overline{j},\in\mathbb{Z}_{\ge 0},\quad 
0\le \overline{m}, \overline{j}\le p-1.
\end{equation}
Then
\begin{equation}\label{lattiasumma}
\begin{split}
\left\lfloor\frac{j}{p}\right\rfloor+\left\lfloor\frac{m-j}{p}\right\rfloor &=\left\lfloor l+\frac{\overline{j}}{p}\right\rfloor+\left\lfloor \frac{m-lp-\overline{j}}{p}\right\rfloor\\ 
&=l+\left\lfloor \frac{m+1}{p} -l-1+ \frac{p-1-\overline{j}}{p} \right\rfloor\geq \left\lfloor\frac{m+1}{p}\right\rfloor-1=h-1.
\end{split}											
\end{equation}
Thus, the bound $\min_{0 \le j \le m} \left\{ \left\lfloor\frac{j}{p}\right\rfloor+\left\lfloor\frac{m-j}{p}\right\rfloor \right\}
\geq   \left\lfloor\frac{m+1}{p}\right\rfloor-1
\geq   \frac{m}{p}-2$ together with \eqref{wvahenee} verifies the estimate
\begin{equation*}
\begin{split}
\kappa_m &\ge w_{\frac{m+1}{2}}\!\left(s(m)e^{s(m)}\right)\frac{1}{m} \sum_{p\le \frac{m+1}{2}} \left(\left\lfloor\frac{m+1}{p}\right\rfloor-1\right) \frac{\log p}{p-1}\\
& \ge w_{\frac{m+1}{2}}\!\left(s(m)e^{s(m)}\right) \sum_{p\le \frac{m+1}{2}} \left(1-\frac{2p}{m}\right) \frac{\log p}{p(p-1)}.	
\end{split}											
\end{equation*}
Hence, by restricting the sum to primes $p\le \sqrt{m}$, we get  
\begin{equation*}
\kappa_m \geq w_{\frac{m+1}{2}}\!\left(s(m)e^{s(m)}\right) \left(1-\frac{2}{\sqrt{m}}\right) \sum_{p\le \sqrt{m}} \frac{\log p}{p(p-1)} \stackrel{m \to \infty}{\to} \sum_{p\in\mathbb{P}} \frac{\log p}{p(p-1)}.
\end{equation*}
On the other hand,
\begin{equation*}
\begin{split}
\kappa_m &= \frac{1}{m} \sum_{p\le \frac{m+1}{2}} \min_{0 \le j \le m} \left\{ \left\lfloor\frac{j}{p}\right\rfloor+\left\lfloor\frac{m-j}{p}\right\rfloor \right\} \frac{\log p}{p-1} \, w_p\!\left(s(m)e^{s(m)}\right) \\
&\le \frac{1}{m} \sum_{p\le \frac{m+1}{2}} \left\lfloor\frac{m}{p}\right\rfloor \frac{\log p}{p-1}\le \sum_{p\le \frac{m+1}{2}}\frac{\log p}{p(p-1)} \stackrel{m \to \infty}{\to} \sum_{p\in\mathbb{P}} \frac{\log p}{p(p-1)}.
\end{split}
\end{equation*}
This proves the asymptotic behaviour \eqref{kappaasymptotically}. As for the numerical value in \eqref{kappaasymptotically}, see the sequence \href{https://oeis.org/A138312}{A138312} in \cite{sloane}.
\end{proof}

With $s(m)=m(\log m)^2$, for instance \eqref{kappaarvio} gives
\begin{equation*}
\kappa_m\geq
\begin{cases}
0, & m=2;\\
0.215544, & m=3;\\
0.173121, & m=4;\\
0.387118, & m=5;\\
0.322600, & m=6;\\
0.375535, & m=7;\\
0.397256, & m=8;\\
\end{cases} \quad \textrm{and}\quad \kappa_m\geq
\begin{cases}
0.474840, & m=9; \\
0.427356, & m=10;\\
0.501455, & m=11;\\
0.459667, & m=12;\\
0.502575, & m=13;\\
0.534653, & m=14.
\end{cases}
\end{equation*}

Note that to simplify numerical computations for large $m$, the estimate \eqref{simplifynum} is already rather sharp, where in addition the factor 
$w_{\frac{m+1}{2}}\!\left(s(m)e^{s(m)}\right)$ 
is very close to 1.

\begin{lemma}
It holds that $\kappa_m \ge 0.5$ for all $m \ge 13$.\end{lemma}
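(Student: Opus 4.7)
The plan is to split into two regimes and exploit the simplified estimate \eqref{simplifynum}. For $m = 13$ and $m = 14$, the numerical table immediately preceding this lemma already yields $\kappa_{13} \ge 0.502575$ and $\kappa_{14} \ge 0.534653$, both $\ge 1/2$, so these two cases are free.

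For $m \ge 15$ I would apply \eqref{simplifynum}, which reads $\kappa_m \ge w_{(m+1)/2}(s(m)e^{s(m)}) \cdot S(m)$, with
\[
S(m) := \frac{1}{m}\sum_{\substack{p \le (m+1)/2\\ p \in \mathbb{P}}}\left(\left\lfloor\tfrac{m+1}{p}\right\rfloor - 1\right)\frac{\log p}{p-1}.
\]
A first observation is that, since $s(m) \ge s(15) = 15(\log 15)^2 > 100$, the quantity $s(m)e^{s(m)}$ is astronomically large; plugging this into the formula for $w$ shows that $w_{(m+1)/2}(s(m)e^{s(m)})$ differs from $1$ by something vastly smaller than anything we shall need (below $10^{-40}$, say), so it suffices to prove $S(m) \ge 1/2$ up to a negligible margin.

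I would then split by the size of $m$. For $15 \le m \le 20$, the only primes $p \le (m+1)/2$ are $\{2, 3, 5, 7\}$, and I would just compute $S(m)$ exactly; direct evaluation shows that each value exceeds $1/2$, the tightest being $S(16) \approx 0.511$. For $m \ge 21$ I would apply the uniform bound $\lfloor (m+1)/p \rfloor - 1 \ge (m-2p+2)/p$ to the primes $p \in \{2, 3, 5, 7, 11\}$ (all of which are available once $m \ge 21$), yielding
\[
S(m) \ge \sum_{p \in \{2,3,5,7,11\}} \frac{\log p}{p(p-1)} - \frac{2}{m}\sum_{p \in \{2,3,5,7,11\}} \frac{\log p}{p} \approx 0.678 - \frac{3.06}{m},
\]
which already exceeds $1/2$ at $m = 21$ (giving $\approx 0.53$) and grows monotonically thereafter.

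The hard part will be the two tight cases $m = 15, 16$, where $\kappa_m$ barely exceeds $1/2$. The clean analytic bound above dips below $1/2$ in that range, so one must rely on direct evaluation of $S(m)$ there, together with the (easy but necessary) verification that the tiny correction $1 - w_{(m+1)/2}(s(m)e^{s(m)})$ is comfortably smaller than the slack $S(m) - 1/2 \approx 0.01$.
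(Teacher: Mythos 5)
Your proposal is correct and runs along the same lines as the paper's own argument: both rely on the estimate \eqref{simplifynum}, both exploit the floor bound $\lfloor(m+1)/p\rfloor-1\geq(m-2p+2)/p$ (the paper writes it as $\frac{h-1}{m}\geq\frac{1}{p}(1-\frac{2p-2}{m})$), and both reduce the lemma to an analytic bound for large $m$ plus a finite check. The difference is one of tuning. The paper throws away all primes with $p>\frac{m}{20}+1$ so as to get a clean factor $\tfrac{9}{10}$, which forces it to take $m\geq 80$ before the surviving primes $\{2,3,5\}$ alone yield more than $0.5$, and it then falls back on a Sage check for $13\leq m\leq 79$. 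You instead keep the five primes $\{2,3,5,7,11\}$ and apply the floor bound directly, giving $S(m)\geq 0.678-3.06/m$, which already clears $0.5$ from $m=21$ onward; this shrinks the hand-verification to $m=15,\ldots,20$ (with $m=13,14$ read off the table). Your route gets a sharper analytic tail and a substantially smaller computational remainder at the cost of carrying one more prime; the paper's route is blunter but keeps the analytic part almost trivial and leans more heavily on the computer. Both give valid proofs of the lemma.

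One small remark worth making explicit in a write-up: in the range $m\geq 21$ you must note that the dropped terms (primes $11<p\leq\frac{m+1}{2}$) are all nonnegative, so discarding them is a legitimate lower bound, and that $m-2p+2\geq 0$ for all five retained primes once $m\geq 21$ (tightest at $p=11$, $m=21$). You have implicitly used both facts; they are easy, but should be stated.
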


\begin{proof}
By \eqref{jakoalg} and \eqref{lattiasumma} we get
\begin{equation*}
\frac{1}{m}\left(\left\lfloor\frac{j}{p}\right\rfloor+\left\lfloor\frac{m-j}{p}\right\rfloor\right) \geq \frac{h-1}{m} \geq \frac{1}{p}\left(1-\frac{2p-2}{m}\right).
\end{equation*}
We choose, for example, $1-\frac{2p-2}{m} \geq \frac{9}{10}$ which is equivalent to $p\leq \frac{m}{20}+1$. Then
\begin{equation}\label{inclb}
\kappa_m \ge \frac{9}{10} w_{\frac{m+1}{2}}\!\left(s(m)e^{s(m)} \right) \sum_{p\le \frac{m}{20}+1} \frac{\log p}{p(p-1)}.	
\end{equation}
Now
$$
w_{\frac{m+1}{2}}\!\left(s(m)e^{s(m)} \right) = 1- \frac{\frac{m+1}{2} + \frac{ \frac{m+1}{2}-1 }{\log \left( \frac{m+1}{2} \right)} \cdot \log \left( m(\log m)^2 e^{m(\log m)^2} \right)}{m(\log m)^2 e^{m(\log m)^2}} > 1- 10^{-666},
$$
when $m \ge 80$. In \eqref{inclb} we have an increasing lower bound for $\kappa_m$, and therefore
\begin{equation*}
\kappa_m \ge \frac{9}{10} w_{\frac{80+1}{2}}\!\left(s(80)e^{s(80)}\right) \sum_{p\le \frac{80}{20}+1} \frac{\log p}{p(p-1)} > \frac{9}{10} \cdot \left( 1- 10^{-666} \right) \cdot \sum_{p\le 5} \frac{\log p}{p(p-1)}\ge 0.549133,
\end{equation*}
when $m \ge 80$. As for $13 \le m \le 79$, the estimate $\kappa_m \ge 0.5$ is quickly verified using Sage \cite{Sage} and estimate \eqref{simplifynum}.
\begin{center}
\includegraphics[width=1\textwidth]{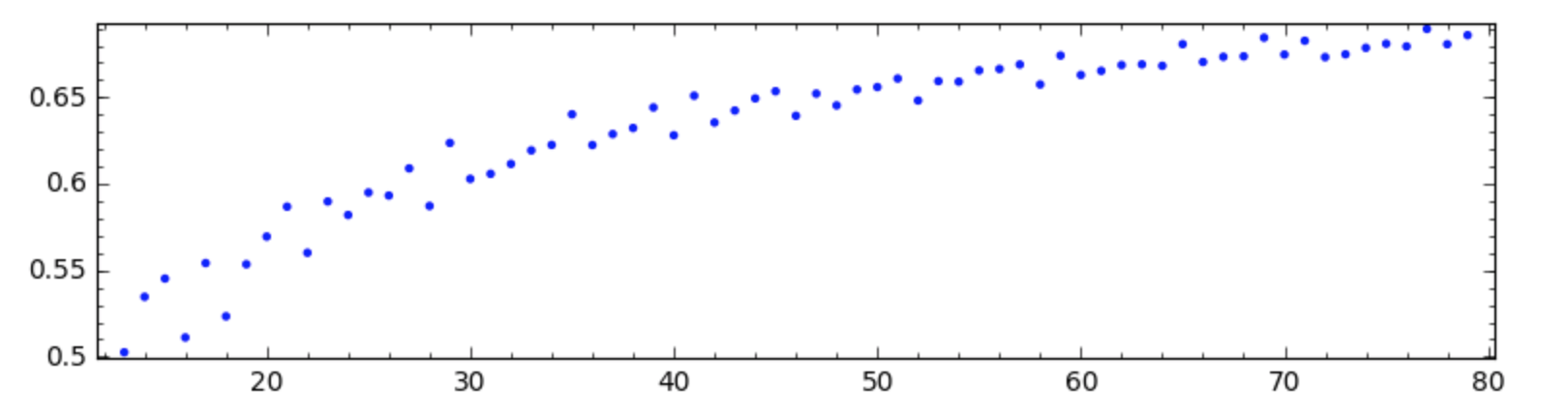}\end{center}
\end{proof}

%NUMEERISET
\section{Numerical linear forms}

By extracting the common factor $D_{m,l}$ from the linear forms \eqref{systemlinearforms} we are led to the numerical linear forms 
\begin{equation}\label{numeerinenlineaari}
B_{k,0} e^{j}+ B_{k,j}  = L_{k,j}, \quad j=1, \ldots ,m;\ k=0,1, \ldots ,m,
\end{equation}
where
\begin{equation*}
B_{k,j}:=\frac{1}{D_{m,l}}B^*_{k,j}(1),\quad j=0,1, \ldots ,m;\ k=0,1, \ldots ,m,
\end{equation*}
are integers and
\begin{equation*}
L_{k,j}:=\frac{1}{D_{m,l}}L^*_{k,j}(1),\quad j=1, \ldots ,m;\ k=0,1, \ldots ,m,.
\end{equation*}
Note that $B_{k,j} = B_{k,j}(l)$ and $L_{k,j} = L_{k,j}(l)$.

According to \eqref{l^k}, now $L=(m+1)l-1$. We have $s(m) = m (\log m)^2$ for $m \ge 3$, $s(2)=e$. 
Because of the condition \eqref{takehatn} we have the assumption $l \ge e^{s(m)}$.

The following two  lemmas give the necessary estimates for the coefficients $B_{k,0}$ and the remainders $L_{k,j}$ of the linear forms 
\eqref{numeerinenlineaari}. In the subsequent estimates we shall use Stirling's formula (see e.g. \cite{stirling}, formula 6.1.38) in the form
\begin{equation*}
n! = \sqrt{2 \pi} n^{n+\frac{1}{2}} e^{-n + \frac{\theta (n)}{12n}}, \quad 0 < \theta (n) < 1.
\end{equation*}
Then
\begin{equation}\label{(l-1)fact}
\frac{1}{(l-1)!} \le \exp \left(-l\log l +l(\log l - \log(l-1))+l-1 +\frac{1}{2}\log(l-1)-\log\sqrt{2\pi} \right).
\end{equation}

\begin{lemma}\label{Estimates}
Let $l\geq e^{m (\log m)^2}$ when $m \ge 3$, and $l \ge e^e$ when $m=2$. We have
\begin{equation*}
|B_{k,0}| \le \exp \left(ml\log l+l((m+1)\log (m+1)-(1+\kappa_m)m+0.0000525) \right);
\end{equation*}
for $k=0,1, \ldots ,m$ and $m\geq 5$. When $m=2,3,4$, we have the bounds
\begin{equation}\label{smallvalpol}
\begin{cases}
|B_{k,0}|\leq \exp\left(2l\log l+1.6791l\right), & \textrm{when m=2};\\
|B_{k,0}|\leq \exp\left(3l\log l+2.1016l\right), & \textrm{when m=3};\\
|B_{k,0}|\leq \exp\left(4l\log l+3.3612l\right), & \textrm{when m=4}.
\end{cases}
\end{equation}
\end{lemma}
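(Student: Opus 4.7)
The plan is to bound $|B^*_{k,0}(1)|$ from the integral representation \eqref{Bkjtl-1} (with $j=0$ and $t=1$) and then divide by the common-factor lower bound from Theorem~\ref{kappatheorem}. Specifically, moving the absolute value inside the integral and using the uniform estimate $|i-y|\le y+m$ for all $y\ge 0$ and $i\in\{0,1,\ldots,m\}$, combined with the fact that the polynomial factor has total degree $L=(m+1)l-1$, yields
\[
|B^*_{k,0}(1)|\le \frac{1}{(l-1)!}\int_0^\infty e^{-y}(y+m)^{L}\,dy
=\frac{e^m}{(l-1)!}\int_m^\infty e^{-u}u^{L}\,du\le \frac{e^m\,L!}{(l-1)!},
\]
via the substitution $u=y+m$ and extension of the integral back to $[0,\infty)$.

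Next I would apply Stirling's formula to both factorials, using the form \eqref{(l-1)fact} provided in the text for $(l-1)!$ together with the standard upper bound $L!\le\sqrt{2\pi}L^{L+1/2}e^{-L+1/(12L)}$, and the elementary estimates $\log((m+1)l-1)\le\log(m+1)+\log l$ and $\log(l-1)\ge\log l-\frac{1}{l-1}$. Collecting terms, the leading asymptotic becomes
\[
\log|B^*_{k,0}(1)|\le ml\log l+\bigl((m+1)l-\tfrac12\bigr)\log(m+1)-ml+m+O(1).
\]
Dividing by $D_{m,l}\ge e^{\kappa_m m l}$ from Theorem~\ref{kappatheorem} yields the dominant shape $ml\log l+l((m+1)\log(m+1)-(1+\kappa_m)m)$, and the remaining $-\tfrac12\log(m+1)+m+O(1)$ is absorbed into $0.0000525\,l$ using the hypothesis $l\ge e^{s(m)}=e^{m(\log m)^2}$, which makes $l$ enormous relative to any polynomial function of $m$ when $m\ge 5$.

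For the small cases $m=2,3,4$ the same chain of inequalities is applied, but the lower-order terms cannot be absorbed into a uniform constant; instead one substitutes the specific value of $m$ and the corresponding $\kappa_m$ (noting that $\kappa_2=0$ since there are no primes $p\le 3/2$), carries Stirling through with explicit numerical constants, and rounds to obtain the three bounds $1.6791l$, $2.1016l$, $3.3612l$ in \eqref{smallvalpol}.

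The main obstacle I anticipate is the precise numerical bookkeeping required to justify the explicit constant $0.0000525$: one must track the Stirling remainders, the correction $\log l-\log(l-1)$, the term $\tfrac12\log(m+1)$, and the additive $e^m$ factor simultaneously, and verify that their sum fits into the budget $0.0000525\,l$ uniformly for all $m\ge 5$ under the stated hypothesis. A secondary subtlety is that Theorem~\ref{kappatheorem} as formulated requires the stronger bound $l\ge s(m)e^{s(m)}$, whereas this lemma only assumes $l\ge e^{s(m)}$, so applying the common-factor estimate here may require either revisiting the proof of Theorem~\ref{kappatheorem} under the weaker hypothesis or noting that the resulting loss is negligible against the same small constant.
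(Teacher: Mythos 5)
Your proposal is correct and follows a genuinely different, cleaner route than the paper's. The paper splits the integral $\int_0^\infty e^{-x}\bigl|\prod_j (j-x)^{l_j}\bigr|/|k-x|\,dx$ into three pieces over $[0,m]$, $[m,2(m+1)l]$, $[2(m+1)l,\infty)$, bounds the first piece by a careful estimate of $\max_{0<x<1}\prod_{j=0}^5|j-x|$, bounds the middle piece by the peak value of $f(x)=e^{-x}x^{(m+1)l}$, and the tail by a geometric series, arriving at $|B^*_{k,0}(1)|\le 6(m+1)l\,e^{-(m+1)l}((m+1)l)^{(m+1)l}/(l-1)!$. Your uniform bound $|i-y|\le y+m$ followed by the translation $u=y+m$ reduces the whole integral to $e^m\Gamma(L+1)/(l-1)!$ in one line; after Stirling this is actually a somewhat \emph{tighter} estimate (the paper's polynomial prefactor $6(m+1)l$ is much larger than your $e^m/\sqrt{2\pi(m+1)l}$), so the explicit constant $0.0000525$ is safely within reach and the remaining bookkeeping is routine. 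What each approach buys: the paper's decomposition keeps the contribution over $[0,m]$ explicit (which the authors reuse verbatim for the remainder bounds in Lemma~\ref{Estimates2}, where the interval really is $[0,j]\subset[0,m]$ and the large-$x$ tail does not appear), whereas your argument is shorter for the present lemma but would not transfer directly to the remainders. You also correctly spot the real subtlety that Theorem~\ref{kappatheorem} is stated under $l\ge s(m)e^{s(m)}$ while this lemma only assumes $l\ge e^{s(m)}$; the paper's proof invokes $D_{m,l}\ge e^{\kappa_m ml}$ under the weaker hypothesis without comment, so this mismatch is an issue in the source, not in your argument.
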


\begin{proof}
The structure of the proof is the following: First we treat the term $B^*_{k,0}(1)$ by using the formulas given for it to obtain a bound. 
Then we factor out the common factor $D_{m,l}$ of $B^*_{k,0}(1)$ yielding to $B_{k,0}$. 
Finally, the bound $Q(l)$ is then the bound for $B_{k,0}$.

By \eqref{Bkjtl-1} we have
\begin{equation*}
B^*_{k,0}(1)=\frac{1}{(l-1)!}\int_0^{\infty}e^{-x}\frac{\prod_{j=0}^m (j-x)^l}{k-x} \mathrm{d}x.
\end{equation*}
Let us split the integral into following pieces:
\begin{equation*}
%\begin{split}
%&
\int_0^{\infty}e^{-x}\frac{\prod_{j=0}^m (j-x)^l}{k-x} \mathrm{d}x
= \left(\int_0^m+\int_m^{2(m+1)l}+\int_{2(m+1)l}^{\infty}\right)e^{-x}\frac{\prod_{j=0}^m (j-x)^l}{k-x} \mathrm{d}x.
%\end{split}
\end{equation*}
When $x\geq m$,
\begin{equation*}
\left| \frac{\prod_{j=0}^m (j-x)^l}{k-x} \right| = x^l (x-1)^l \cdots (x-k)^{l-1} \cdots (x-m)^l \leq x^{(m+1)l -1} \le x^{(m+1)l}.
\end{equation*}
Hence, we may estimate
\[
\left|\left(\int_m^{2(m+1)l}+\int_{2(m+1)l}^{\infty}\right)e^{-x}\frac{\prod_{j=0}^m (j-x)^l}{k-x} \mathrm{d}x \right|
\leq \left(\int_m^{2(m+1)l}+\int_{2(m+1)l}^{\infty}\right)e^{-x}x^{(m+1)l} \mathrm{d}x.
\]
Write $f(x)=e^{-x}x^{(m+1)l}$. Now $f'(x)=(-x+(m+1)l)e^{-x}x^{(m+1)l-1}$, which has a unique zero at $x=(m+1)l$. The function $f(x)$ is increasing for $x\leq (m+1)l$ and decreasing for $x\geq (m+1)l$. Let us now estimate the integrals. The function $f(x)$ obtains its maximum at $x=(m+1)l$, and we may thus estimate
\begin{equation*}
\int_{m}^{2(m+1)l}e^{-x}x^{(m+1)l}\mathrm{d}x\leq 2(m+1)le^{-(m+1)l}((m+1)l)^{(m+1)l}.
\end{equation*}
On the interval $x\geq 2(m+1)l$, the function $f(x)$ is decreasing. Our aim is to find an upper bound for the integral using a geometric sum. Let us first write
\begin{equation*}
\int_{2(m+1)l}^{\infty}e^{-x}x^{(m+1)l} \mathrm{d}x\leq\sum_{h=0}^{\infty}e^{-2(m+1)l-h}(2(m+1)l+h)^{(m+1)l}.
\end{equation*}
Notice that $\frac{f(x+1)}{f(x)}=e^{-1}\left(1+\frac{1}{x}\right)^{(m+1)l}\leq e^{-1/2}$, when $x\geq 2(m+1)l$. It follows that $f(x+h) \le \left( e^{-\frac{1}{2}} \right)^h f(x)$. Hence
\begin{equation*}
\begin{split}
\sum_{h=0}^{\infty}e^{-2(m+1)l-h}(2(m+1)l+h)^{(m+1)l}
&\leq \frac{e^{-2(m+1)l}(2(m+1)l)^{(m+1)l}}{(1-e^{-1/2})} \\
&<2.55 e^{-2(m+1)l}(2(m+1)l)^{(m+1)l}.
\end{split}
\end{equation*}

Finally, we have to estimate the first integral. We have
\begin{equation*}
\max_{n\leq x\leq n+1}\prod_{j=0}^m |j-x|^{l-1} \leq \max_{0\leq x\leq 1}\prod_{j=0}^m |j-x|^{l-1}
\end{equation*}
for $0\leq n\leq m-1$. Now
\begin{equation*}
\left| \frac{\prod_{j=0}^m (j-x)^l}{k-x} \right|
\le m! \max_{0\leq x\leq 1}\prod_{j=0}^m |j-x|^{l-1}.
\end{equation*}
Hence
\begin{equation*}
%\begin{split}
\left| \int_0^m e^{-x}\frac{\prod_{j=0}^m (j-x)^l}{k-x} \mathrm{d}x \right|
\leq  \frac{m!(m!)^{l-1}}{(5!)^{l-1}}\int_0^m e^{-x}\max_{0\leq x\leq 1}\prod_{j=0}^5|j-x|^{l-1} \mathrm{d}x %\\
%& 
\leq \;\frac{(m!)^l16.91^{l-1}}{120^{l-1}}
%\end{split}
\end{equation*}
when $m\geq 5$. When $m<5$, we can estimate
\[
\left| \int_0^m e^{-x}\frac{\prod_{j=0}^m (j-x)^l}{k-x} \mathrm{d}x \right| \le
\begin{cases}
\frac{2^{l+1}}{3^{3(l-1)/2}}, & \textrm{when }m=2;\\
6, & \textrm{when }m=3;\\
24\cdot  3.632^{l-1}, &\textrm{when }m=4.
\end{cases}
\]

We may conclude that
\begin{equation*}
\begin{split}
|B^*_{k,0}(1)|\leq \;&\frac{(m!)^l16.91^{l-1}}{(l-1)!120^{l-1}}+\frac{2(m+1)l((m+1)l)^{(m+1)l}}{(l-1)!e^{(m+1)l}}+\frac{2.55e^{-2(m+1)l}(2(m+1)l)^{(m+1)l}}{(l-1)!}\\
\leq \;&\frac{6(m+1)l}{(l-1)!}e^{-(m+1)l}((m+1)l)^{(m+1)l}\\ 
\leq %\;&\exp\bigg(\log(6(m+1)l)-(m+1)l- \left(l-\frac{1}{2} \right)\log (l-1)\\
%&+l-1-\log\sqrt{2\pi}+(m+1)l\log((m+1)l)\bigg)\\ 
%=\exp\left((m+1)l\log l-l\log l+(m+1)l\log (m+1)-(m+1)l+l+\frac{1}{2}\log l+\log (m+1)+\log 6-\log\sqrt{2\pi}\right)\\
%=\;&\exp \bigg( \log 6 + \log (m+1) + \log l - ml - l -l \log (l-1) + \frac{1}{2} \log (l-1)\\
%&+l -1 - \log \sqrt{2 \pi} + (m+1) l \log (m+1) + (m+1) l \log l \bigg)\\
%=
\;&\exp\bigg(m l \log l+l \left((m+1)\log (m+1)-m +\log l-  \log (l-1) \right)\\
&+ \log l +\frac{1}{2}\log (l-1)+\log (m+1)+\log 6-1-\log\sqrt{2\pi}\bigg).
\end{split}
\end{equation*}

Next we take into account the common factor $D_{m,l}$ estimated by $e^{\kappa_m ml}$. 
Remember that $B_{k,0}$ will be the expression that is obtained when $B^*_{k,0}(1)$ is divided by the common factor. Now
\begin{equation}\label{Bk0estimate2}
\begin{split}
|B_{k,0}| \leq \exp\bigg(&m l \log l+l \left((m+1)\log (m+1) -(1+\kappa_m)m +\log l-  \log (l-1) \right)\\
& +\log l+\frac{1}{2}\log (l-1) +\log (m+1)+\log 6-1-\log\sqrt{2\pi}\bigg).
\end{split}
\end{equation}
Since $m \ge 5$ and $l \ge e^{m (\log m)^2} \ge e^{5 (\log 5 )^2}$, we have
\begin{equation}\label{est1}
\log l-  \log (l-1) \le 0.000002373
\end{equation}
and
\begin{equation}\label{est2}
 \frac{\log l}{l} +\frac{\log (l-1)}{2l} + \frac{\log (m+1)}{l} + \frac{\log 6-1-\log\sqrt{2\pi}}{l} \le 0.00005005.
\end{equation}
At last, estimate \eqref{Bk0estimate2} with \eqref{est1} and \eqref{est2} yields
\begin{equation*}
|B_{k,0}| \leq \exp\left(ml\log l+l((m+1)\log (m+1)-(1+\kappa_m)m+0.0000525)\right).
\end{equation*}

When $m=2$, we have $|B_{k,0}|=|B_{k,0}^*(1)|$ and $l\geq \left\lceil e^e \right\rceil =16$. Now
\begin{multline*}
|B_{k,0}| \le \frac{1}{(l-1)!} \left( 2^{l+1}3^{-3(l-1)/2}+2\cdot 3\cdot l e^{-3l}(3l)^{3l}+2.55\cdot e^{-6l}(6l)^{3l} \right)\\
\leq \frac{3\cdot 2\cdot 3l\cdot e^{-3l}(3l)^{3l}}{(l-1)!}\\
\leq \exp \bigg(-l\log l +l(\log l - \log(l-1))+l-1 + \frac{1}{2}\log(l-1) -\log\sqrt{2\pi}\\
+\log 18+\log l-3l+3l\log 3+3l\log l \bigg)\\
%\leq \exp\left(2l\log l+(3\log 3-2)l+ l(\log l-\log(l-1))-1 +\frac{1}{2}\log(l-1) + \log l +\log\frac{18}{\sqrt{2\pi}}\right)\\
= \exp\left(2l\log l+l\left(3\log 3-2+ \log\frac{l}{l-1} + \frac{\log l}{l} + \frac{\log(l-1)}{2l}+\frac{\log \frac{18}{\sqrt{2\pi}}-1}{l}\right)\right)\\
\leq \exp\left(2l\log l+1.6791 l\right).
\end{multline*}
When $m=3$, we have $l\geq \left\lceil e^{3 (\log 3)^2} \right\rceil = 38$ and we need to divide $|B_{k,0}^*(1)|$ 
by the common factor to get the correct bound for the term $|B_{k,0}|$. Hence
\begin{multline*}
|B_{k,0}| \le \frac{e^{-0.215544\cdot 3l}}{(l-1)!}\left(6 + 8le^{-4l}(4l)^{4l}+2.55e^{-2\cdot 4l}(8l)^{4l}\right) \leq \frac{3e^{-0.215544\cdot 3l}}{(l-1)!}\cdot 8le^{-4l}(4l)^{4l} \\
\leq \exp \bigg(-l\log l +l(\log l - \log(l-1))+l-1 + \frac{1}{2}\log(l-1) -\log\sqrt{2\pi}\\
-3\cdot 0.215544 l + \log 24  +\log l - 4l +4l\log 4 + 4l\log l \bigg)\\
= \exp\Bigg(3l\log l+l\Bigg(4\log 4-3 -3\cdot 0.215544 + \log\frac{l}{l-1}+ \frac{\log l}{l} + \frac{\log(l-1)}{2l}+ \frac{\log \frac{24}{\sqrt{2\pi}} - 1}{l}\Bigg) \Bigg) \\
\leq \exp\left(3l\log l+2.1016 l\right).
\end{multline*}
When $m=4$, it holds $l\geq \left\lceil e^{4 (\log 4)^2} \right\rceil = 2181$ and again we have to divide $|B_{k,0}^*(1)|$ by the common factor to get the correct bound for the term $|B_{k,0}|$. Hence
\begin{multline*}
|B_{k,0}| \le \frac{e^{-0.173121\cdot4l}}{(l-1)!}\left(2\cdot 5le^{-5l}(5l)^{5l}+2.55e^{-2\cdot5l}(2\cdot5l)^{5l}+24\cdot3.632^{l-1}\right)\\
\leq \frac{3e^{-0.173121\cdot4l}}{(l-1)!}\cdot 10 le^{-5l}(5l)^{5l} \\
\leq \exp \bigg(-l\log l +l(\log l - \log(l-1))+l-1 + \frac{1}{2}\log(l-1) -\log\sqrt{2\pi}\\
-4 \cdot 0.173121 l + \log 3 + \log 10 + \log l -5l + 5l \log 5 + 5l \log l \bigg) \\
= \exp \Bigg( 4l \log l + l \Bigg( 5 \log 5 - 4 -4 \cdot 0.173121 + \log \frac{l}{l-1} + \frac{\log l}{l} + \frac{\log (l-1)}{2l} + \frac{\log \frac{30}{\sqrt{2 \pi}} -1}{l} \Bigg) \Bigg) \\
\leq \exp (-l \log l + 3.3612 l).
\end{multline*}
In all three cases, the coefficient of $l$ is a decreasing function in $l$.
\end{proof}

\begin{lemma}\label{Estimates2}
Let $l\geq e^{m (\log m)^2}$ when $m \ge 3$, and $l \ge e^e$ when $m=2$. We have
\begin{equation*}
\sum_{j=1}^{m}| L_{k,j} | \le \exp\left(-l\log l +l\left(\left(m+\frac{1}{2}\right)\log m-(\kappa_m+1) m-0.02394\right)\right)
\end{equation*}
for $j=1, \ldots ,m$, $k=0,1, \ldots ,m$, and $m\geq 5$. When $m=2,3,4$, we have the bounds
\begin{equation}\label{smallvalrem}
\begin{cases}
\sum_{j=1}^2 |L_{k,j}|\leq \exp\left(-l\log l+0.3654l\right),& \textrm{when m=2};\\
\sum_{j=1}^3 |L_{k,j}|\leq \exp\left(-l\log l+0.5139l\right),& \textrm{when m=3};\\
\sum_{j=1}^4 |L_{k,j}|\leq \exp\left(-l\log l+1.6016l\right),& \textrm{when m=4}.
\end{cases}
\end{equation}
\end{lemma}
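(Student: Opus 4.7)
The plan is to follow the same strategy used for Lemma~\ref{Estimates} but applied to the remainder $L^*_{k,j}(1)$ given by~\eqref{Lkjtl-1}. The key simplification is that the interval $[0,j]$ of integration lies entirely inside the ``small'' region $[0,m]$ — the first of the three pieces treated in the proof of Lemma~\ref{Estimates} — so neither the peak at $x = (m+1)l$ nor the exponential tail contribute here. This is precisely why the coefficient of $l\log l$ collapses from $m$ to $-1$ and why the linear-in-$l$ coefficient becomes $(m+\tfrac12)\log m$ rather than $(m+1)\log(m+1)$.

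For $m\ge 5$, the concrete steps are as follows. From~\eqref{Lkjtl-1} I would pass to absolute values and use $|e^{j-x}|\le e^j$ on $[0,j]$ to obtain $|L^*_{k,j}(1)| \le \frac{e^j}{(l-1)!}\int_0^j \frac{\prod_{i=0}^m|i-x|^l}{|k-x|}\,dx$. The integrand is then bounded uniformly on $[0,m]$ by $m!\cdot(16.91\,m!/120)^{l-1}$, via the same two ingredients used in the proof of Lemma~\ref{Estimates}: the inequality $\prod_{i\ne k}|i-x|\le m!$ on $[0,m]$ and the estimate $\max_{x\in[0,m]}\prod_{i=0}^m|i-x| = \max_{x\in[0,1]}\prod_{i=0}^m|i-x| \le 16.91\cdot m!/120$. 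Next I would integrate $e^{j-x}$ against this bound, sum $\sum_{j=1}^m e^j \le e^{m+1}/(e-1)$, apply Stirling in the forms $\log m! \le (m+\tfrac12)\log m - m + \log\sqrt{2\pi} + 1/(12m)$ and~\eqref{(l-1)fact} for $1/(l-1)!$, and divide by $D_{m,l}\ge e^{\kappa_m ml}$ from Theorem~\ref{kappatheorem}. Collecting terms, the coefficient of $l$ in the logarithm of the resulting bound reads $(m+\tfrac12)\log m - (1+\kappa_m)m + \bigl[\log\sqrt{2\pi} + \log(16.91/120) + 1\bigr] + 1/(12m)$, with the bracketed quantity approximately $-0.0405$; the $l$-independent remainder terms vanish to leading order under the hypothesis $l\ge e^{s(m)}$ and are absorbed into the coefficient of $l$.

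For the small cases $m=2,3,4$ I would argue in parallel with the explicit computations leading to~\eqref{smallvalpol} in the proof of Lemma~\ref{Estimates}, using the exact maxima of $\prod_{i=0}^m|i-x|$ on $[0,m]$ (for instance $2/(3\sqrt 3)$ for $m=2$) together with the numerical values $\kappa_2 = 0$, $\kappa_3 \ge 0.215544$, $\kappa_4 \ge 0.173121$, to reach the tabulated constants $0.3654$, $0.5139$, $1.6016$. The principal obstacle is arithmetic rather than conceptual: the target constant $-0.02394$ lies very close to the ``theoretical limit'' $\log\sqrt{2\pi} + \log(16.91/120) + 1\approx -0.0405$ set by the integrand estimate, so every approximation — in particular the Stirling correction $1/(12m)$ and the rounding of $\max_{[0,1]}\prod_{j=0}^5|j-x|\approx 16.89$ to $16.91$ — must be traced quantitatively to verify that the inequality holds uniformly for all $m\ge 5$.
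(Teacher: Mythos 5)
Your proposal follows essentially the same route as the paper's proof: bound $|e^{j-x}|\le e^j$ on $[0,j]$, estimate the integrand on $[0,m]$ via $\prod_{i\ne k}|i-x|\le m!$ and $\max_{[0,1]}\prod_{i=0}^{5}|i-x|\le 16.91$ so that the integral is at most $(m!)^l\,16.91^{l-1}/120^{l-1}$ times $(e^j-1)$, sum the geometric factor to get $1.582\,e^m$, divide by $D_{m,l}\ge e^{\kappa_m m l}$, and apply Stirling to both $(m!)^l$ and $1/(l-1)!$; the paper then verifies numerically that $1+\log\sqrt{2\pi}+\tfrac1{12m}+\log\tfrac{16.91}{120}$ plus the $O(1/l)$ corrections stays below $-0.02394$, exactly the tightness issue you flag, and treats $m=2,3,4$ by the explicit maxima you indicate. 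Your accounting of the constants is consistent with the paper's, so the proposal is correct.
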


\begin{proof}
We proceed as in the proof of Lemma \ref{Estimates}: first we bound the terms $L^*_{k,j}(1)$, then sum them, and finally factor out the common divisor to obtain the bound $R(l)$.
According to equation \eqref{Lkjtl-1}, we have the representation
\begin{equation*}
L^*_{k,j} (t)= \frac{t^{L+1}}{(l-1)!} \int_{0}^{j}e^{(j-x)t} (0-x)^l (1-x)^l \cdots (k-x)^{l-1} \cdots (m-x)^l \mathrm{d}x.
\end{equation*}
The expression $| x (1-x) \cdots (m-x) |$ attains its maximum in the interval $] 0, m [$  for the first time when $0 < x < 1$, so
\[
\max_{0 < x < m} | x (1-x) \cdots (m-x) | \le \frac{m!}{5!} \max_{0 < x < 1} |x(1-x)(2-x)(3-x)(4-x)(5-x)|= \frac{m!16.91}{120}.
\]
Thus we may estimate
\begin{equation*}
%\begin{split}
| L^*_{k,j}(1) | \le \;\frac{e^j}{(l-1)!} \int_0^j e^{-x}\frac{\prod_{r=0}^m |r-x|^{l}}{|k-x|}\mathrm{d}x
%\le \;&\frac{e^j}{(l-1)!} \max_{0 < x < j} \left| (0-x)^l (1-x)^l \cdots (k-x)^{l-1} \cdots (m-x)^l \right| \int_0^j e^{-x} \mathrm{d}x \\
%\le \;&\frac{e^j}{(l-1)!} \cdot m! \cdot \left( \frac{m!16.91}{120} \right)^{l-1} \left(-e^{-j} + 1\right) \\
\le \;\frac{(m!)^l16.91^{l-1}}{120^{l-1}(l-1)!} \left(e^j - 1\right) %\\
%\le \;&\exp \bigg(-l \log (l-1) \\
%&+ \left(\left(m + \frac{1}{2} \right) \log m - m + 1 + \frac{1}{12m} + \log \sqrt{2 \pi} - \log 120 +\log 16.91 \right)l \\
%&+ \frac{1}{2} \log (l-1) + j -1 - \log \sqrt{2 \pi} - \log 16.91 + \log 120 \bigg) \\
%\le \;&e^{-l \log (l-1) + \left(\left(m + \frac{1}{2} \right) \log m - m-0.02398\right)l + \frac{1}{2} \log (l -1)+ j-1-\log\sqrt{2\pi}- \log 16.91 + \log 120}
%\end{split}
\end{equation*}
when $m\geq 5$. Using the estimate $\sum_{j=1}^m (e^j-1) < e^m\frac{e}{e-1}$,  and summing together the terms $L^*_{j,k}(1)$, we get
\[
\sum_{j=1}^m |L^*_{j,k}(1)| < \frac{1.582e^m(m!)^l16.91^{l-1}}{120^{l-1}(l-1)!}.
\]
Again we divide by the common factor $D_{m,l}$. Thus the new values $L_{k,j}$ satisfy:
\begin{multline*}
\sum_{j=1}^m | L_{k,j} | < \frac{1.582(m!)^l16.91^{l-1}}{120^{l-1}(l-1)!} \cdot e^{m-\kappa_m ml}
< \exp\left(-\left(l-\frac{1}{2}\right)\log(l-1)+l-\log\sqrt{2\pi}\right)\\
\times \exp\left(l\left(\left(m+\frac{1}{2}\right)\log m-m+\log \sqrt{2\pi}+\frac{1}{12m}+\frac{m}{l}-\kappa_m m+\log16.91-\log 120\right)\right) \\
\times \exp\left(\log 120-\log 16.91+\log 1.582\right).
\end{multline*}
Now
\begin{equation*}\label{est3}
\frac{m}{l}+\log(l)-\log(l-1)+\frac{\log (l-1)}{2l} + 1 - \frac{\log (\sqrt{2 \pi})}{l} \le 1.00003,
\end{equation*}
\begin{equation*}\label{est4}
\log (\sqrt{2 \pi}) + \frac{1}{12m} + \log 16.91 - \log 120 \le -1.02398,
\end{equation*}
and
\begin{equation*}\label{est5}
\frac{\log 120 - \log 16.91 + \log 1.582}{l} \le 5.73802 \cdot 10^{-6}
\end{equation*}
because $m \ge 5$ and $l \ge e^{5 (\log 5 )^2}$. Together these estimates yield
\begin{equation*}
\sum_{j=1}^m | L_{k,j} | \le \exp\left(-l\log l +l\left(\left(m+\frac{1}{2}\right)\log m-(\kappa_m+1) m-0.02394 \right)\right).
\end{equation*}

When $m=2,3,4$,  we can bound the terms $L_{k,j}$ in the following way:
\[
\frac{(l-1)!}{e^j}|L_{k,j}^*|\leq \int_0^j e^{-x}\frac{\prod_{r=0}^m|r-x|^l}{|k-x|}\mathrm{d}x\leq
\begin{cases}
\frac{2^{l+1}}{3^{3(l-1)/2}},\ &\textrm{when }m=2; \\
6 ,\ &\textrm{when }m=3;\\
24 \cdot 3.632^{l-1}, \ &\textrm{when }m=4.
\end{cases}
\]
We may now move to estimating the sums $\sum_{j=1}^m |L_{k,j}|$ for small $m$. When $m=2$, we have $l \ge 16$ and
\begin{multline*}
\sum_{j=1}^2 |L_{k,j}| = \sum_{j=1}^2 |L_{k,j}^*| \le \frac{1}{(l-1)!}\cdot \frac{2^{l+1}}{3^{3(l-1)/2}}\left(e+e^2\right)\\
\leq \exp \bigg(-l\log l +l(\log l - \log(l-1)) +l -1 + \frac{1}{2}\log(l-1) -\log\sqrt{2\pi}+\log \left(e+e^2 \right) \\
+(l+1)\log 2-\frac{3(l-1)}{2}\log 3 \bigg) 
= \exp\Bigg(-l\log l +l\Bigg(\log 2-\frac{3}{2}\log 3 +1 + \log\frac{l}{l-1} \\ 
+ \frac{\log(l-1)}{2l}+\frac{\log(e+e^2) + \log \frac{2}{\sqrt{2 \pi}} + \frac{3}{2}\log 3 -1}{l} \Bigg)\Bigg) \leq \exp\left(-l\log l+0.3654 l\right).
\end{multline*}
When $m=3$, we have $l \ge 38$ and we need to divide to remove the common factors. Then
\begin{multline*}
\sum_{j=1}^3 |L_{k,j}| \le \frac{e^{-0.215544\cdot 3l}}{(l-1)!} \cdot 6 \left(e+e^2+e^3\right)\\
\leq \exp\bigg(-l\log l +l(\log l - \log(l-1)) +l -1 + \frac{1}{2}\log(l-1) -\log\sqrt{2\pi} \\
- 3\cdot 0.215544 l+\log 6 +\log \left(e+e^2+e^3 \right) \bigg) \\
\leq \exp\Bigg(-l\log l + l\Bigg(1-3\cdot 0.215544 + \log\frac{l}{l-1} + \frac{\log(l-1)}{2l}+\frac{\log(e+e^2+e^3)+ \log \frac{6}{\sqrt{2\pi}} -1}{l}\Bigg)\Bigg) \\
\leq \exp\left(-l\log l+0.5139 l\right).
\end{multline*}
When $m=4$, we have $l \ge 2181$ and again we divide by the common factor. Thus
\begin{multline*}
\sum_{j=1}^m |L_{k,j}| \le \frac{e^{-0.173121 \cdot 4l}}{(l-1)!}\cdot 24 \cdot 3.6^{l-1} \left(e+e^2+e^3+e^4\right) \\
\leq \exp\bigg(-l\log l +l(\log l - \log(l-1)) +l -1 + \frac{1}{2}\log(l-1) -\log\sqrt{2\pi} \\
-4 \cdot 0.173121 l + \log 24 + (l-1) \log 3.632 + \log \left(e+e^2+e^3+e^4\right) \bigg)\\
= \exp \Bigg( -l \log l + l \Bigg( 1-4 \cdot 0.173121 + \log 3.632 + \log \frac{l}{l-1} + \frac{\log (l-1)}{2l} \\
+ \frac{\log (e+e^2+e^3+e^4) + \log \frac{24}{3.632 \sqrt{2 \pi}} -1}{l} \Bigg) \Bigg) \leq \exp (-l \log l + 1.6016 l).
\end{multline*}
Again, in all three cases, the coefficient of $l$ is a decreasing function in $l$.
\end{proof}

%MITTA
\section{Measure}

We will apply Lemma \ref{lemma23}. The determinant condition \eqref{DET} is certainly satisfied by Lemma \ref{Lemma5.1}
and \eqref{numeerinenlineaari}.
According to Lemmas \ref{Estimates} and \ref{Estimates2}, we have $|B_{k,0}(l)|\le Q(l)=e^{q(l)}$ and $\sum_{j=1}^{m}| L_{k,j} | \le R(l)= e^{-r(l)}$, where
\begin{equation}\label{ql}
q(l)=ml\log l+l((m+1)\log (m+1)-(1+\kappa_m)m+0.0000525),
\end{equation}
\begin{equation}\label{-rl}
-r(l)= - l \log l + l \left(\left(m + \frac{1}{2} \right) \log m - (1+ \kappa_m)m-0.02394\right),
\end{equation}
for all $k,j=0,1, \ldots ,m$, $m \ge 5$.
Comparing formulas \eqref{ql} and \eqref{-rl} to \eqref{qn} and \eqref{-rn}, we have
\begin{equation*}
\begin{cases}
a=m,\\
b=(m+1)\log (m+1)-(1+ \kappa_m)m+\delta,\quad \delta = 0.0000525;\\
c=1,\\
d=\left(m+\frac{1}{2}\right)\log m-(1+ \kappa_m)m-0.02394.
\end{cases}
\end{equation*}
Now, with $s(m)=m(\log m)^2$, the formulas in \eqref{BCD} give
\begin{equation*}
\begin{cases}
B = b+\frac{ad}{c} \\
\phantom{B}= m^2\log m-(1+ \kappa_m)m^2+(m+1)\log (m+1)+ \frac{1}{2}m\log m-(1.02394+ \kappa_m)m+\delta,\\
C =a=m, \\
D =a+b+ae^{-s(m)} = (m+1)\log (m+1)-\kappa_m m+\delta+\frac{m}{e^{m(\log m)^2}}
\end{cases}
\end{equation*}
for all $m \ge 5$. Recall also the shorthand notations $u= 1+ \frac{\log (s(m))}{s(m)}$ and $v = 1 - \frac{d}{s(m)}$.

For the small values $m=2,3,4$, we compare equations \eqref{smallvalpol} and \eqref{smallvalrem} to \eqref{qn} and \eqref{-rn}. Again $a=m$ and $c=1$, and moreover
\begin{equation}\label{bdsmallm}
\begin{cases}
b=1.6791, \\
d=0.3654
\end{cases}
\text{for }m=2; \quad
\begin{cases}
b=2.1016,\\
d=0.5139
\end{cases}
\text{for }m=3; \quad
\begin{cases}
b=3.3612,\\
d=1.6016
\end{cases}
\text{for }m=4.
\end{equation}
Hence, with $s(2)=e$ and $s(m)=m(\log m)^2$ for $m=3,4$, we get
\begin{equation}\label{BCDsmallm}
\begin{cases}
B=2.4099\\
C=2, \\
D=3.8111
\end{cases}
\text{for }m=2; \quad
\begin{cases}
B=3.6433,\\
C=3,\\
D=5.1819
\end{cases}
\text{for }m=3; \quad
\begin{cases}
B=9.7676,\\
C=4, \\
D=7.3631
\end{cases}
\text{for }m=4.
\end{equation}

We may thus finally establish our Main result \ref{mainresult}:

\begin{proof}[Proofs of Theorem \ref{mainresult} and Corollary \ref{corollaryofmainres}]
The values above have been achieved with the choice $\Theta_j = e^j$. 
Combining them with Lemma \ref{lemma23} leads straight to the result \eqref{mainreseq}. 
Corollary \ref{corollaryofmainres} follows likewise by plugging these values into Corollary \ref{corollaari24}.
\end{proof}

Estimate \eqref{mainestimate} still requires a bit more work.

\begin{proof}[Proof of Theorem \ref{MAINCOROLLARY}]
Let us first consider the case with $m\geq 5$. 
According to Lemma \ref{lemma23}, we have
$$
1 < | \Lambda | 2 (2H)^{\frac{a}{c}} e^{\epsilon (H) \log (2H) +D} =   |\Lambda| H^{\frac{a}{c}+Y}=   |\Lambda| H^{m+Y},
$$
where
\begin{equation}\label{Ylauseke} 
\begin{split}
Y := \;&  \frac{1}{\log H}\left(B z\!\left(\frac{\log (2H)}{1 - \frac{d}{s(m)}}\right) + 
m \log\left(z\!\left(\frac{\log (2H)}{1 - \frac{d}{s(m)}}\right)\right) +D+ \left(m+1 \right)\log 2\right)   \\
 \le \;&  \frac{1}{\log H}\left( \frac{uB}{v}\frac{\log(2H)}{\log\log(2H)} + 
m\log\left( \frac{u}{v}\frac{\log(2H)}{\log\log(2H)}\right) + D+ \left(m+1 \right)\log 2\right)   \\
\end{split}
\end{equation}
by estimate \eqref{nhattuylaraja}.
By recalling our assumption $\log H\ge s(m)e^{s(m)}$,
it is obvious from the expression \eqref{Ylauseke} that the terms corresponding to the parameters $C$ and $D$ contribute much less 
than the term corresponding to the parameter $B$. The first task is to bound them in such a way that they only slightly increase 
the constant term in the expression for the parameter $B$. 
Let us start with the terms $D$ and $(m+1)\log 2$. 
We have
\begin{equation*}
\begin{split}
D+(m+1)\log 2 &=(m+1)\log (m+1)-\kappa_m m+\delta+\frac{m}{e^{m(\log m)^2}}+(m+1)\log 2\\
&=(m+1)\log(m+1) +m\left(\frac{\delta}{m} +\frac{1}{e^{m(\log m)^2}}+\log 2+\frac{\log 2}{m}-\kappa_m\right)\\
&\leq (m+1)\log(m+1)+\frac{1}{2}m.
\end{split}
\end{equation*}

Since $v\log\log(2H)\geq 1$, we may estimate
\[
m\log\left(\frac{u\log(2H)}{v\log\log (2H)}\right)\leq m\log(u\log(2H)).
\]
Hence, the estimate becomes
\begin{equation*}
\begin{split}
Y &\le  \frac{1}{\log H}\left( \frac{uB}{v}\frac{\log(2H)}{\log\log(2H)} + m\log\left(u\log(2H)\right) + (m+1)\log(m+1)+\frac{1}{2}m \right)\\
&\leq  \frac{1}{\log H}\left( \frac{uB}{v}\frac{\log(2H)}{\log\log(2H)} + m\log\left(2\log H\right) + (m+1)\log(m+1)+\frac{1}{2}m \right)\\
&\leq  \frac{1}{\log H}\left( \frac{uB}{v}\frac{\log(2H)}{\log\log(2H)} + \frac{5}{4}m\log\left(2\log H\right)\right).
\end{split}
\end{equation*}
We have now derived
\begin{equation*}
\begin{split}
Y &\leq \frac{1}{\log H}\left( \frac{uB}{v}\frac{\log(2H)}{\log\log(2H)} + \frac{5}{4}m\log\left(2\log H\right)\right)\\
&\leq \frac{1}{\log\log H}\left(\frac{\log (2H)}{\log H}\cdot \frac{uB}{v}+\frac{5}{4}\cdot \frac{m(\log\log  H) \log(2\log H)}{\log H}\right)\\
&=\frac{u}{v\log\log H}\left(B+\frac{1}{\log H}\left(\log(2)B+\frac{5vm(\log\log H)\log(2\log H)}{4u}\right)\right).
\end{split}
\end{equation*}

When $m=5$, the above formulation gives
\[
Y \le \frac{u}{v\log\log H}\left(B+0.0002069\right).
\]

When $m\geq 6$, we proceed as follows. Notice now that roughly estimating we have $B\leq m^2\log m-\kappa_mm^2$ because $\log(m+1)-(1.02394+\kappa_m)m+\delta\leq 0$ and $-m^2+m\log(m+1)+\frac{1}{2}m\log m\leq 0$. Furthermore, $\kappa_m\geq 0.32$, when $m\geq 6$. Since $0<v\leq 1\leq u$, we have now derived the inequality
\begin{equation*}
\begin{split}
Y &\leq \frac{u}{v\log\log H}\left(B+\frac{m^2}{\log H}\left(\log (2) \log m-\kappa_m\log(2)+\frac{5(\log\log H)(\log(2+\log H))}{4m} \right)\right)\\
&\leq \frac{u}{v\log\log H} \left(B+10^{-6} \right).
\end{split}
\end{equation*}

When $m\geq 6$, let us take a closer look at 
\begin{multline*}
f(m):=\frac{u(B+10^{-6})}{v m^2\log m} = \\
\frac{\left(1 + \frac{1}{m\log m}+\frac{2\log\log m}{m(\log m)^2} \right) 
\left( 1 -\frac{1+\kappa_m}{\log m} + \frac{(m+1)\log (m+1)}{m^2 \log m} +\frac{1}{2m} - 
\frac{1.02394+\kappa_m}{m \log m} + \frac{0.0000525+10^{-6}}{m^2 \log m} \right)}
{1 - \frac{1}{\log m} - \frac{1}{2m\log m} + \frac{1+\kappa_m}{ (\log m)^2}}.
\end{multline*}
When $m=5$, define the value $f(5)$ using the same formula but $0.0002069$ in the place of of $10^{-6}$. Before moving any further, notice that the value of the expression $f(m)$ can be estimated, 
and compared against the value of $\left(1-\frac{\kappa_m}{\log m}\right)\left(1-\frac{2\kappa_m}{(\log m)^2}\right)$ 
when $5\leq m\leq 14$. The calculations are performed by Sage \cite{Sage}. The values of both functions are presented in the following table:
\[
\begin{array}{|c|c|c|}
\hline m & f(m) & \left(1-\frac{\kappa_m}{\log m}\right)\left(1-\frac{2\kappa_m}{(\log m)^2}\right)\\
\hline 
5 & 0.4638\ldots & 0.5324\ldots\\
6 & 0.6159\ldots & 0.6551\ldots\\
7 &  0.6032\ldots & 0.6469\ldots\\
8 & 0.6158\ldots & 0.6603\ldots\\
9 & 0.5768\ldots & 0.6296\ldots\\
10 & 0.6366\ldots& 0.6831\ldots\\
11 & 0.5995\ldots & 0.6529\ldots\\
12 & 0.6444\ldots & 0.6936\ldots\\
13 &0.6286\ldots & 0.6812\ldots\\
14 & 0.6203\ldots & 0.6749\ldots \\
\hline
\end{array}
\]
It is evident from these values that 
$f(m) \leq \left(1-\frac{\kappa_m}{\log m}\right)\left(1-\frac{2\kappa_m}{(\log m)^2}\right)$ 
when $5\leq m\leq 14$. Actually, when $m\ne 6$, the coefficient $2$ could be replaced by the better coeffient $2.5$.
%To make comparison clear, the values are illustrated in Figure \ref{ekakuva}.
%\begin{figure}[h]
%  \centering
%  \caption{The values of $f(m)$ against the values of $\left(1-\frac{\kappa_m}{\log m}\right)\left(1-\frac{2\kappa_m}{(\log m)^2}\right)$ }
%	\vskip5mm
%    \includegraphics[width=0.7\textwidth]{uusiplotti3}\label{ekakuva}
%\end{figure}

We have thus shown $f(m) \leq \left(1-\frac{2\kappa_m}{(\log m)^2}\right)\left(1-\frac{\kappa_m}{\log m}\right)$ when $5\leq m\leq 14$, and the proof is ready for $5\leq m\leq 14$. For the rest of the proof we assume that $m\geq 15$ meaning also that $0.5\leq \kappa_m\leq 0.756$. Let us continue by writing
\[
f(m)=g(m)h(m),
\]
where
\[
g(m):=\frac{1-\frac{1}{\log m}}{1-\frac{1}{\log m}-\frac{1}{2m\log m}+\frac{1+\kappa_m}{(\log m)^2}}=
1-\frac{1+\kappa_m}{(\log m)^2}\frac{1-\frac{\log m}{2m(1+\kappa_m)}}{1-\frac{1}{\log m}-\frac{1}{2m\log m}+\frac{1+\kappa_m}{(\log m)^2}}
\]
and
\[ 
h(m):=
\frac{\left(1 + \frac{1}{m\log m}+\frac{2\log\log m}{m(\log m)^2} \right)
\left( 1 -\frac{1+\kappa_m}{\log m} + \frac{(m+1)\log (m+1)}{m^2 \log m} +\frac{1}{2m} - 
\frac{1.02394+\kappa_m}{m \log m} + \frac{0.0000535}{m^2 \log m} \right)}
{1 - \frac{1}{\log m}}.
\]

First we show that $g(m)\leq 1-\frac{1+\kappa_m}{(\log m)^2}$. This claim is equivalent to
\[
2+\frac{1}{m}-\frac{2(1+\kappa_m)}{\log m}-\frac{(\log m)^2}{(1+\kappa_m)m}\geq 0, 
\]
which is true when $m\geq 15$ because $\frac{2(1+\kappa_m)}{\log m}<\frac{4}{\log 15}<1.48$ and $\frac{(\log m)^2}{(1+\kappa_m)m}<\frac{(\log m)^2}{m}<0.49$.  We still need to  prove that 
\begin{equation*}\label{hmarvio}
h(m)\leq 1-\frac{\kappa_m}{\log m}.
\end{equation*}
Let us now look at the second term in the numerator of $h(m)$.
First take a look at the ratio
\[
\frac{(m+1)\log (m+1)}{m^2 \log m}\leq \frac{1}{m}+\frac{1}{m^2\log m}+\frac{1}{m^2}+\frac{1}{m^3\log m}.
\]

We have
\begin{multline*}
1 -\frac{1+\kappa_m}{\log m} + \frac{(m+1)\log (m+1)}{m^2 \log m} +\frac{1}{2m} - \frac{1.02394+\kappa_m}{m \log m} + \frac{0.0000535}{m^2 \log m}\\
\leq 1 -\frac{1+\kappa_m}{\log m} + \frac{1}{m}+\frac{1}{m^2\log m}+\frac{1}{m^2}+\frac{1}{m^3\log m}+\frac{1}{2m} - \frac{1.02394+\kappa_m}{m \log m} + \frac{0.0000535}{m^2 \log m}\\
<1-\frac{1+\kappa_m}{\log m}+\frac{3}{2m},
\end{multline*}
since
\[
\frac{1}{m^2}+\frac{1.0000535}{m^2\log m}+\frac{1}{m^3\log m}-\frac{1.02394+\kappa_m}{m\log m}<0.
\]
Thus, we have
\begin{multline*}
\frac{\left(1 + \frac{1}{m\log m}+\frac{2\log\log m}{m(\log m)^2} \right) \left( 1 -\frac{1+\kappa_m}{\log m} + \frac{(m+1)\log (m+1)}{m^2 \log m} +\frac{1}{2m} - \frac{1.02394+\kappa_m}{m \log m} + \frac{0.0000535}{m^2 \log m} \right)}{1 - \frac{1}{\log m} - \frac{1}{2m\log m} + \frac{1+\kappa_m}{ (\log m)^2}}\\
<%\frac{\left(1 + \frac{1}{m\log m}+\frac{2\log\log m}{m(\log m)^2} \right)\left(1-\frac{1+\kappa_m}{\log m}+\frac{3}{2m}\right)}{1-\frac{1}{\log m}}\\=
\left(1+\frac{1}{m\log m}+\frac{2\log \log m}{m(\log m)^2}\right)\left(1-\frac{\frac{\kappa_m}{\log m}-\frac{3}{2m}}{1-\frac{1}{\log m}}\right).
\end{multline*}

Let us now prove that
\[
\frac{\left(1 + \frac{1}{m\log m}+\frac{2\log\log m}{m(\log m)^2} \right)\left(1-\frac{1+\kappa_m}{\log m}+\frac{3}{2m}\right)}{1-\frac{1}{\log m}}<1-\frac{\kappa_m}{\log m}.
\]
This is done by showing that
\[
1-\frac{1+\kappa_m}{\log m}+\frac{3}{2m}<\left(1-\frac{\kappa_m}{\log m}\right)\left(1-\frac{1}{\log m}\right)\left(1-\frac{1}{m\log m}-\frac{2\log\log m}{m(\log m)^2}\right),
\]
because then
\begin{multline*}
\frac{\left(1 + \frac{1}{m\log m}+\frac{2\log\log m}{m(\log m)^2} \right)\left(1-\frac{1+\kappa_m}{\log m}+\frac{3}{2m}\right)}{1-\frac{1}{\log m}}<\\
\frac{\left(1 + \frac{1}{m\log m}+\frac{2\log\log m}{m(\log m)^2} \right)\left(1-\frac{\kappa_m}{\log m}\right)\left(1-\frac{1}{\log m}\right)\left(1-\frac{1}{m\log m}-\frac{2\log\log m}{m(\log m)^2}\right)}{1-\frac{1}{\log m}}
<1-\frac{\kappa_m}{\log m}.
\end{multline*}
Notice first that
\begin{multline*}
\left(1-\frac{\kappa_m}{\log m}\right)\left(1-\frac{1}{\log m}\right)\left(1-\frac{1}{m\log m}-\frac{2\log\log m}{m(\log m)^2}\right)\\
%=1-\frac{1+\kappa_m}{\log m}+\frac{\kappa_m}{(\log m)^2}-\frac{1}{m\log m}+\frac{1+\kappa_m}{m(\log m)^2}-\frac{\kappa_m}{m(\log m)^3}\\
%-\frac{2\kappa_m\log\log m}{m(\log m)^4}+\frac{2(\kappa_m+1)\log\log m}{m(\log m)^3}-\frac{2\log\log m}{m(\log m)^2}\\
>1-\frac{1+\kappa_m}{\log m}+\frac{\kappa_m}{(\log m)^2}-\frac{1}{m\log m}+\frac{1+\kappa_m}{m(\log m)^2}-\frac{2\log\log m}{m(\log m)^2}\\
>1-\frac{1+\kappa_m}{\log m}+\frac{\kappa_m}{(\log m)^2}-\frac{2}{m\log m}+\frac{1+\kappa_m}{m(\log m)^2},
\end{multline*}
so we have to show that
\[
\frac{3}{2m}<\frac{\kappa_m}{(\log m)^2}-\frac{2}{m\log m}+\frac{1+\kappa_m}{m(\log m)^2}.
\]
This is equivalent to $3(\log m)^2+4\log m<2\kappa_m m+2+2\kappa_m$. When $m\geq 15$, the right hand side of the inequality  is at least $2m+3$, since $\kappa_m\geq 0.5$. The inequality
\[
3(\log m)^2+4\log m<2m+3
\]
is true when $m\geq 14.74$, and hence for all integer values $m\geq 15$. The proof is complete for $m \ge 5$.

Let us now move to the small values of $m$. We use estimate \eqref{Ylauseke} with the values in \eqref{bdsmallm} and \eqref{BCDsmallm}. When $m=2$, we have $\log H\ge s(2)e^{s(2)} = e^{e+1}$, $\frac{u}{v} \le 1.5804$, and hence
\begin{multline*}
Y\leq \frac{1}{\log H}\left(\frac{uB}{v}\frac{\log (2H)}{\log\log (2H)}+2\log\left(\frac{u}{v}\frac{\log(2H)}{\log\log(2H)}\right)+D+3\log 2\right)\\
\le \frac{1}{\log\log H}\bigg(1.5804\cdot 2.4099\frac{\log (2H)}{\log H}+2\cdot 0.7732\frac{(\log\log H)^2}{\log H}+\frac{(3.8111+3\log 2)\log\log H}{\log H}\bigg)\\
\leq \frac{4.93}{\log \log H}.
\end{multline*}
When $m=3$, we have $\log H\ge s(3)e^{s(3)} = 3(\log 3)^2 e^{3 (\log 3)^2}$, $\frac{u}{v} \le 1.5796$, and hence
\begin{multline*}
Y\leq \frac{1}{\log H}\left(\frac{uB}{v}\frac{\log (2H)}{\log\log (2H)}+3\log\left(\frac{u}{v}\frac{\log(2H)}{\log\log(2H)}\right)+D+4\log 2\right)\\
\leq  \frac{1}{\log\log H}\bigg(1.5796\cdot 3.6433 \frac{\log(2H)}{\log H}+ 3\cdot 0.7699\frac{(\log\log H)^2}{\log H}+\frac{(5.1819+4\log 2)\log\log H}{\log H}\bigg)\\
\leq \frac{6.49}{\log\log H}.
\end{multline*}
When $m=4$, we have $\log H\ge s(4)e^{s(4)} = 4(\log 4)^2 e^{4 (\log 4)^2}$, $\frac{u}{v} \le 1.5984$, and hence
\begin{multline*}
Y\leq \frac{1}{\log H}\left(\frac{uB}{v}\frac{\log (2H)}{\log\log (2H)}+4\log\left(\frac{u}{v}\frac{\log(2H)}{\log\log(2H)}\right)+D+5\log 2\right)\\
\leq  \frac{1}{\log\log H}\bigg(1.5984 \cdot 9.7676 \frac{\log (2H)}{\log H}+4\cdot 0.8144\frac{(\log\log H)^2}{\log H}+\frac{(7.3631+5\log 2)\log\log H}{\log H}\bigg)\\
\leq \frac{15.7}{\log\log H}.
\end{multline*}
\end{proof}

\section{Sparse polynomials}

The method presented in this paper suits very well for obtaining bounds for sparse polynomials of $e$, namely, 
polynomials which have a considerable number of coefficients equal to zero.
Let the pairwise different non-negative integers $\beta_0=0,\beta_1,\dots,\beta_{m_1}$ be the exponents of the sparse polynomial
$P(x)=\lambda_0+\lambda_1 x^{\beta_1}+\ldots +\lambda_{m_1}x^{\beta_{m_1}}\in\mathbb{Z}_{\mathbb{I}}[x]$.

\begin{theorem}
Let $P(x)=\lambda_0+\lambda_1 x^{\beta_1}+\ldots +\lambda_{m_1} x^{\beta_{m_1}}$ 
be a polynomial with at most $m_1+1\geq 2$ non-zero coefficients, 
and of degree $m_2\geq 4$, where $m_2\geq m_1+1$. Suppose $\log H\geq m_2(\log m_2)^2e^{m_2(\log m_2)^2}$.
Then the bound
\[
|P(e)|>H^{-m_1-\frac{\rho(m_1^2+3m_1+2)\log m_2}{\log \log H}}
\]
holds for all 
$\,\overline{\lambda}=(\lambda_0, \lambda_1, \ldots ,\lambda_m)^T \in \mathbb{Z}_{\mathbb{I}}^{m+1}\setminus\{\overline{0}\}$ 
with $\max_{1\le i\le m} \{|\lambda_i|\} \le H$,
where the constant $\rho\leq 12.88$ for all $m_2\geq 4$, and $\rho\leq 2$ when $m_2\geq 11$.
\end{theorem}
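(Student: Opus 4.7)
The plan is to replay the construction of Sections \ref{Hermite-Pade}--\ref{Determinant} and Lemmas \ref{Estimates}--\ref{Estimates2} with the dense tuple $(0,1,\ldots,m)$ replaced by the sparse tuple $\overline{\alpha}=(\beta_0,\beta_1,\ldots,\beta_{m_1})$, and then feed the resulting numerical linear forms into Lemma \ref{lemma23} with $\Theta_j = e^{\beta_j}$. More precisely, applying Theorem \ref{Padeapprox} with this $\overline{\alpha}$ and the weight vectors $\overline{l}^{(k)}$ of \eqref{l^k} (now of length $m_1+1$, so that $L = (m_1+1)l-1$) produces polynomials $B^*_{k,j}(t)$ and remainders $L^*_{k,j}(t)$ of exactly the form \eqref{Bkjtl-1}--\eqref{Lkjtl-1}. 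The non-vanishing of the associated determinant, i.e.\ the analog of Lemma \ref{Lemma5.1}, is unchanged: its proof only uses the degree profile of the entries and the order of vanishing of the remainders, both of which depend on $m_1$ and $l$ rather than on the specific values $\beta_j$.

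The technical core is estimating $|B^*_{k,0}(1)|$ and $\sum_{j=1}^{m_1}|L^*_{k,j}(1)|$. I would mimic the splitting of the Laplace integral used in Lemma \ref{Estimates}, bounding $|\beta_i - x|$ crudely by $m_2$ on the bounded range $0 \le x \le m_2$ and by $x$ for $x \ge m_2$; the peak of $e^{-x}\prod|\beta_i-x|^l$ then sits near $x=(m_1+1)l$, and Stirling yields
\[
|B^*_{k,0}(1)| \le \exp\bigl(m_1 l\log l + l\cdot O((m_1+1)\log m_2)\bigr).
\]
For the remainders, the integration domain is contained in $[0,m_2]$ and every factor is at most $m_2$, so
\[
\sum_{j=1}^{m_1}|L^*_{k,j}(1)| \le \exp\bigl(-l\log l + l\cdot O((m_1+1)\log m_2)\bigr).
\]
I would \emph{not} try to extract common factors of the sort produced in Section \ref{Commonfactors}: the target $\rho \le 12.88$ is loose enough that the argument goes through with $\kappa_{m_1}$ effectively set to $0$, which substantially simplifies the bookkeeping.

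With these bounds in hand I would invoke Lemma \ref{lemma23} with parameters $a = m_1$, $c = 1$, and $b, d = O((m_1+1)\log m_2)$, together with $s(m) := s(m_2) = m_2(\log m_2)^2$; the stated hypothesis $\log H \ge s(m_2)e^{s(m_2)}$ is then precisely the admissibility condition $\log(2H) \ge v n_1 \log n_1$. The constraint $m_2 \ge m_1+1$ keeps $v = 1 - d/s(m_2)$ bounded away from $0$, and Corollary \ref{corollaari24} together with Lemma \ref{lemma24} converts the $z$-term into $\frac{u}{v}\cdot\frac{\log(2H)}{\log\log(2H)}$. This produces $|P(e)| > H^{-m_1 - \epsilon(H)}$ with
\[
\epsilon(H)\log H \le \frac{u}{v}\cdot B \cdot \frac{\log(2H)}{\log\log(2H)} + o\!\left(\frac{\log H}{\log\log H}\right),
\]
where $B = b + ad/c \le (m_1+1)^2\log m_2\,(1+o(1))$. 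Since $(m_1+1)^2 \le (m_1+1)(m_1+2) = m_1^2+3m_1+2$, the exponent has exactly the shape $m_1 + \frac{\rho(m_1^2+3m_1+2)\log m_2}{\log\log H}$.

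The main obstacle, and essentially the only real work, is the numerical bookkeeping to pin $\rho$ down to the stated constants. For the general bound $\rho \le 12.88$, the hypotheses $m_2 \ge 4$ and $m_2 \ge m_1+1$ force $u \le 1+\log(s(4))/s(4)\approx 1.27$ and $v \ge 1 - 1/\log m_2 \ge 1 - 1/\log 4$, both bounded, so combining the crude ratios $u/v$ and $(m_1+1)/(m_1+2) \le 1$ with the leading-order computation of $B$ absorbs all losses into $\rho \le 12.88$. For the sharper bound $\rho \le 2$ under $m_2 \ge 11$, the inequality $s(m_2) \ge 11(\log 11)^2 > 26$ makes $u$ very close to $1$ and forces $v \ge 1 - 1/\log 11$, while the small-$m_1$ slack in $(m_1+1)/(m_1+2)$ is handled separately (in particular the single case $m_1 = 1$) using the explicit formulas for $b$ and $d$.
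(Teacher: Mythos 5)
Your overall plan matches the paper's proof: rerun the Hermite--Pad\'e machinery with $\overline{\alpha}=(\beta_0,\ldots,\beta_{m_1})$, split the Laplace integral at $m_2$ and at $2m_2l$ so the peak sits near $(m_1+1)l$, obtain $q(l)$ and $-r(l)$ of the form $m_1 l\log l + O(l)$ and $-l\log l + O(l)$ with $a=m_1,\,c=1$, skip common factors entirely, and feed everything into Lemma~\ref{lemma23} with $s:=s(m_2)$. The paper does exactly this, with $b=(m_1+2)\log(m_1+1)-m_1$ and $d=(m_1+2)\log m_2$, giving $B=b+ad/c\le(m_1+1)(m_1+2)\log m_2-m_1$, and the $C,D$ contributions eliminated against the $-m_1$ in $B$ using that $\log(2H)/(\log\log(2H))^2$ is large.

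The flaw in your sketch is in the numerical control of $v$, which is precisely where the constants $12.88$ and $2$ come from. You write $v\ge 1-1/\log m_2$, but that is false: since $d=(m_1+2)\log m_2$ and $s(m_2)=m_2(\log m_2)^2$, one has
\[
v \;=\; 1-\frac{d}{s(m_2)} \;=\; 1-\frac{m_1+2}{m_2\log m_2},
\]
and the constraint $m_2\ge m_1+1$ only gives $v\ge 1-\frac{m_2+1}{m_2\log m_2}$. In the worst case $m_1=m_2-1$ at $m_2=4$ this is $v\approx 0.098$, not $1-1/\log 4\approx 0.28$, and it is this near-cancellation that produces $u/v\approx 12.9$ and hence $\rho\le 12.88$. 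Likewise, the bound $\rho\le 2$ for $m_2\ge 11$ is not obtained by special-casing small $m_1$ as you suggest; it is simply that even in the worst case $m_1=m_2-1$ one has $v\ge 1-\frac{12}{11\log 11}\approx 0.545$ and $u\approx 1.07$, so $u/v\lesssim 1.96\le 2$. With the $v$-estimate corrected, your argument closes along the same lines as the paper's.
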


\begin{proof} This boils down to estimating the size of the terms $Q(n)$ and $R(n)$. 
We use the polynomial expression $\Omega(w,\overline{\beta})$. Now the polynomial in question is 
$\prod_{j=0}^{m_1}(\beta_j-w)^{l_j}$, where $\beta_j$ are the exponents of the polynomial, so $0\leq \beta_j\leq m_2$ for all $j$. 
Furthermore, we know that $l_j=l$ with the exception of one index, in which case it is $l-1$. 
We may assume that the index in question is $k$, namely, that the terms $B_{k,0}$, $B_{k,j}$ and $L_{k,j}$ 
correspond to the polynomials with $l_k=l-1$. Furthermore, we assume $l\geq s(m_2)e^{s(m_2)}$.

Let us now estimate the size of the polynomial using the same method as earlier. We have
\[
B^*_{k,0}(t)=\frac{1}{(l-1)!}\int_0^{\infty}e^{-xt}\frac{\prod_{j=0}^{m_1}(\beta_j-x)^l}{\beta_k-x}\mathrm{d}x,
\]
and we need the value at $t=1$. 
First the integral needs to be split into integrals over the intervals $[0,m_2]$, $[m_2,2m_2l]$ and $[2m_2l,\infty)$. 
Let us start by looking at the first integral. We have
\[
\frac{1}{(l-1)!}\int_0^{m_2}e^{-xt}\frac{\prod_{j=0}^{m_1}|\beta_j-x|^l}{|\beta_k-x|}\mathrm{d}x
\leq 
\frac{1}{(l-1)!}\int_0^{m_2}m_2^{l(m_1+1)-1} \mathrm{d}x=\frac{m_2^{(m_1+1)l}}{(l-1)!}.
\]

Next we estimate the integral on the interval $[m_2,2m_2l]$. Now
\[
\frac{\prod_{j=0}^{m_1}|\beta_j-x|^l}{|\beta_k-x|}\mathrm{d}x\leq x^{(m_1+1)l-1}.
\]
Let us now look at the function $f(x)=e^{-x}x^{(m_1+1)l-1}$. We have
\[
f'(x)=-e^{-x} x^{(m_1+1)l-1}+((m_1+1)l-1)e^{-x}x^{(m_1+1)l-2}=0,
\]
when $x_0=(m_1+1)l-1$. Hence, the integral can be estimated to be
\[
\frac{1}{(l-1)!}\int_{m_2}^{2m_2l}e^{-x}\frac{\prod_{j=0}^{m_1}|\beta_j-x|^l}{|\beta_k-x|}\mathrm{d}x\leq \frac{2m_2l e^{-(m_1+1)l+1}((m_1+1)l-1)^{(m_1+1)l-1}}{(l-1)!}.
\]

Finally, let us estimate the third integral
\[
\frac{1}{(l-1)!}\int_{2m_2l}^{\infty}e^{-x}\frac{\prod_{j=0}^{m_1}|\beta_j-x|^l}{|\beta_k-x|}\mathrm{d}x\leq 
\frac{1}{(l-1)!}\int_{2m_2l}^{\infty}e^{-x} x^{(m_1+1)l-1}\mathrm{d}x.
\]
Again, we use the function $f(x)=e^{-x} x^{(m_1+1)l-1}$. Since this function obtains its maximum at $x_0=(m_1+1)l-1$, it is decreasing when $x>x_0$. We also have $2m_2l\geq (m_1+1)l \ge x_0$. Hence, we may estimate
\[
\frac{1}{(l-1)!}\int_{2m_2l}^{\infty}e^{-x} x^{(m_1+1)l-1}\mathrm{d}x\leq \frac{1}{(l-1)!}\sum_{h=0}^{\infty} e^{-2m_2l-h}(2m_2l+h)^{(m_1+1)l-1}.
\]
Let us estimate the ratio between consecutive terms:
\[
\frac{e^{-2m_2l-h-1}(2m_2l+h+1)^{(m_1+1)l-1}}{e^{-2m_2l-h}(2m_2l+h)^{(m_1+1)l-1}}=e^{-1}\left(1+\frac{1}{2m_2l+h}\right)^{(m_1+1)l-1}\leq e^{-1/2}.
\]
The third integral can thus be estimated as a geometric sum:
\[
\frac{1}{(l-1)!}\int_{2m_2l}^{\infty}e^{-x} x^{(m_1+1)l-1}\mathrm{d}x\leq \frac{e^{-2m_2l}(2m_2l)^{(m_1+1)l-1}}{(l-1)!(1-e^{-1/2})}.\]
Hence,
\begin{multline*}
\frac{1}{(l-1)!}\int_0^{\infty}e^{-xt}\frac{\prod_{j=0}^{m_1}|\beta_j-x|^l}{|\beta_k-x|}\mathrm{d}x \le \\ 
\frac{m_2^{(m_1+1)l}}{(l-1)!}+\frac{1}{(l-1)!}2m_2le^{-(m_1+1)l+1}((m_1+1)l-1)^{(m_1+1)l-1}+\frac{e^{-2m_2l}(2m_2l)^{(m_1+1)l-1}}{(l-1)!(1-e^{-1/2})}.
\end{multline*}
Since $m_2\leq \frac{l(m_1+1)}{e}$, we have
\[
\frac{m_2^{(m_1+1)l}}{(l-1)!}<\frac{1}{(l-1)!}2m_2le^{-(m_1+1)l+1} ((m_1+1)l-1)^{(m_1+1)l-1},
\]
and since the function $f(x)$ peaks at $(m_1+1)l-1$, we have
\[
\frac{1}{(l-1)!}2m_2le^{-(m_1+1)l+1}((m_1+1)l-1)^{(m_1+1)l-1}
>\frac{e^{-2m_2l}(2m_2l)^{(m_1+1)l-1}}{(l-1)!(1-e^{-1/2})}.
\]
Therefore,
\begin{equation*}
\begin{split}
\frac{1}{(l-1)!}\int_0^{\infty}e^{-xt}\frac{\prod_{j=0}^{m_1}|\beta_j-x|^l}{|\beta_k-x|}\mathrm{d}x &\leq 3\frac{1}{(l-1)!}2m_2le^{-(m_1+1)l+1}((m_1+1)l-1)^{(m_1+1)l-1}\\
&\leq  \frac{2m_2le^{-(m_1+1)l}((m_1+1)l)^{(m_1+1)l}}{(l-1)!}.
\end{split}
\end{equation*}

We need to write the estimate as an exponential function. Using \eqref{(l-1)fact} we get
\begin{multline*}
\frac{2m_2le^{-(m_1+1)l}((m_1+1)l)^{(m_1+1)l}}{(l-1)!} \\
\leq 
%\exp\left(\log 2+\log(m_2l)-(m_1+1)l +(m_1+1)l\log((m_1+1)l)\right)\\
%\times \exp\left(-\left(l-\frac{1}{2}\right)\log(l-1)+(l-1)-\frac{1}{2}\log(2\pi)\right)\\=
\exp\bigg(m_1l\log l+l\log\frac{l}{l-1}+l(m_1+1)\log(m_1+1)-m_1l+\log l \\
+\frac{1}{2}\log (l-1)+\log m_2-1+\log 2-\frac{1}{2}\log (2\pi)\bigg).
\end{multline*}
Since $l\log \frac{l}{l-1}\leq 1$ and
\[
\frac{1}{l} \left( \log l+\frac{1}{2}\log (l-1)+\log m_2+\log 2-\frac{1}{2}\log (2\pi) \right) \leq 0.006<\log (m_1+1),
\]
we have
\[
\frac{1}{(l-1)!}\int_0^{\infty}e^{-xt}\frac{\prod_{j=0}^{m_1}|\beta_j-x|^l}{|\beta_k-x|}\mathrm{d}x \leq \exp\left(m_1l\log l+l((m_1+2)\log(m_1+1)-m_1)\right).\]

Let us now estimate the terms $L_{k,j}$. They have the following integral representations:
\[
\left| L_{k,j} \right|
= \left| \frac{1}{(l-1)!}\int_{0}^{\beta_j}e^{\beta_j-x}\frac{\prod_{i=0}^{m_1}(\beta_i-x)^l}{\beta_j-x}\mathrm{d}x \right|
\leq \frac{e^{\beta_j}m_2^{l(m_1+1)-1}}{(l-1)!}\int_0^{\beta_j}e^{-x}\mathrm{d}x
\leq \frac{e^{\beta_j}m_2^{l(m_1+1)-1}}{(l-1)!}.
\]
We obtain
\[
\sum_{j=1}^{m_1} \left| L_{k,j} \right| \leq \sum_{j=1}^{m_1}\frac{e^{\beta_j}m_2^{l(m_1+1)-1}}{(l-1)!}\leq 
\frac{e^{m_2+1}m_2^{l(m_1+1)-1}}{(l-1)!}.
\]
Now we need to write this as an exponential function:
\begin{multline*}
\sum_{j=1}^{m_1} \left| L_{k,j} \right| \leq \sum_{j=1}^{m_1}\frac{e^{\beta_j}m_2^{l(m_1+1)-1}}{(l-1)!}\leq 
\frac{e^{m_2+1}m_2^{l(m_1+1)-1}}{(l-1)!}\\
\leq \exp\left(m_2+1+(l(m_1+1)-1)\log m_2-\left(l-\frac{1}{2}\right)\log(l-1)+l-1-\frac{1}{2}\log(2\pi)\right)\\
%\leq \exp\left(-l\log l+l((m_1+1)\log m_2+1+0.007)+m_2-\frac{1}{2}\log(2\pi)+\frac{1}{2}\log (l-1)-\log m_2\right)\\
%\leq \exp\left(-l\log l+l((m_1+1)\log m_2+1.007)+m_2+\frac{1}{2}\log(l-1)\right)\\ 
\leq \exp\left(-l\log l+l(m_1+2)\log m_2\right).
\end{multline*}
Now
\[
\begin{cases}
e^{q(l)}\leq \exp\left(m_1l\log l+l((m_1+2)\log(m_1+1)-m_1)\right),\\
e^{-r(l)}\leq \exp\left(-l\log l+l(m_1+2)\log m_2\right).
\end{cases}
\] 
Comparing the above to \eqref{qn} and \eqref{-rn}, we get
\[
a=m_1,\quad
b=(m_1+2)\log(m_1+1)-m_1,\quad
c=1,\quad \textrm{and}\quad
d=(m_1+2)\log m_2,
\]
and by \eqref{BCD},
\[
\begin{cases}
B=b+\frac{ad}{c}\leq (m_1^2+3m_1+2)\log m_2-m_1,\\
C=a=m_1,\\
D=a+b+a e^{-m_1(\log m_1)^2}\\
\phantom{D}=(m_1+2)\log (m_1+1)+m_1 e^{-m_1(\log m_1)^2}\\
\phantom{D}\leq 2(m_1+2)\log(m_1+1).
\end{cases}
\]

Next we sum together the terms arising from the terms $C$ and $D$. We may estimate (see \eqref{Ylauseke})
\begin{multline*}
C \log\left(z\!\left(\frac{\log (2H)}{1 - \frac{d}{s(m)}}\right)\right) +D+ \left(m+1 \right)\log 2 \\
\leq  m_1 \log\left(\frac{u}{v}\cdot \frac{\log (2H)}{\log\log (2H)}\right)+2(m_1+2)\log(m_1+1)+(m_1+1)\log 2\\
\leq 3(m_1+2)\log\left(13\frac{\log (2H)}{\log\log(2H)}\right)
\end{multline*}
since $(m_1+1)\log 2\leq (m_1+2)\log (m_1+1)$, $\frac{u}{v}\leq 13$, and
\[
3(m_1+2)\log (m_1+1)\leq 2(m_1+2)\log\left(13\frac{\log (2H)}{\log\log(2H)}\right).
\]
Now we can combine this term with the term coming from the term $B$:
\begin{multline*}
\frac{1}{\log H}\left(Bn_2+C\log n_2+D+ \left(m_1+1 \right)\log 2\right) \\
\leq \frac{1}{\log H}\left( \frac{u((m_1^2+3m_1+2)\log m_2-m_1)\log(2H)}{v\log\log(2H)}+3(m_1+2)\log\left(13\frac{\log (2H)}{\log\log(2H)}\right)\right).\end{multline*}

Let us start by eliminating the last term with the term $-m_1\frac{u\log (2H)}{v\log\log(2H)}$. Notice that
\[
3(m_1+2)\log \left(13\frac{\log(2H)}{\log\log (2H)}\right)<3(m_1+2)\log \left(\frac{13}{9}\log(2H)\right)< 6(m_1+2)\log\log (2H).
\]
Hence, it suffices to show that $\frac{\log (2H)}{(\log \log (2H))^2}\geq 6\frac{(m_1+2)}{m_1}$. This is easy to do. Notice first that the function $f(x)=\frac{x}{(\log x)^2}$ is increasing when $x\geq e^{2}$, so we may estimate
\[
\frac{\log(2H)}{(\log\log (2H))^2}\geq \frac{\log(H)}{(\log\log (H))^2}\geq \frac{ 2\cdot 4(\log(4))^2e^
{4(\log(4))^2}}{(\log (2\cdot 4(\log(4))^2e^{4(\log(4))^2}))^2} >308,\]
while $6\frac{(m_1+2)}{m_1}\leq 18$.
Thus
\begin{multline*}
\frac{1}{\log H}\left( \frac{u((m_1^2+3m_1+2)\log m_2-m_1)\log (2H)}{v\log\log (2H)}+3(m_1+2)\log\left( 13\frac{\log(2H)}{\log\log(2H)}\right)\right)\\
\leq \frac{1}{\log \log H}\left(1+\frac{\log 2}{\log H}\right)\frac{u}{v} \left(m_1^2+3m_1+2 \right)\log m_2.
\end{multline*}
Finally, $\left(1+\frac{\log 2}{\log H}\right)\frac{u}{v}$ is always at most $12.88$ (the biggest value for $m_2=4$) and it is decreasing. When $m_2\geq 11$, the value of this expression is at most $2$. Computations are performed by Sage \cite{Sage}.
\end{proof}

As a corollary of the bound obtained for sparse polynomials, we get the following transcendence measure for an arbitrary integer power of $e$:

\begin{corollary} Assume $d \in \Z_{\ge 2}$ and $\log H\geq dm(\log(dm))^2e^{dm(\log (dm))^2}$. Then the bound
\[
\left|\lambda_0+\lambda_1e^d+\lambda_2e^{2d}+\ldots +\lambda_me^{md} \right|>\frac{1}{H^{\omega(m,H)}},
\]
where
\[
\omega(m,H)<m+\frac{\rho(m^2+3m+2)\log(dm)}{\log\log H},
\]
holds for all 
$\,\overline{\lambda}=(\lambda_0, \lambda_1, \ldots ,\lambda_m)^T \in \mathbb{Z}_{\mathbb{I}}^{m+1}\setminus\{\overline{0}\}$ 
with $\max_{1\le i\le m} \{|\lambda_i|\} \le H$, and $\rho$ as in the previous theorem.
\end{corollary}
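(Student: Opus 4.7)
The plan is to apply the preceding theorem on sparse polynomials directly, by identifying the polynomial $P(x)=\lambda_0+\lambda_1 x^d+\lambda_2 x^{2d}+\ldots+\lambda_m x^{md}$ as a sparse polynomial whose nonzero exponents are $\beta_0=0, \beta_1=d, \beta_2=2d, \ldots, \beta_m=md$. Under this identification the parameters in the previous theorem become $m_1=m$ (since the number of potentially nonzero coefficients is at most $m+1$) and $m_2=md$ (the degree).

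First I would verify the three hypotheses of the preceding theorem. The condition $m_2\geq m_1+1$ reads $md\geq m+1$, which follows from $d\geq 2$ and $m\geq 1$. The condition $m_2\geq 4$ forces $md\geq 4$, which is satisfied under the implicit understanding $m\geq 2$ (otherwise $d\geq 4$ is needed, but for $m\geq 2$, $d\geq 2$ this is automatic). Finally, the hypothesis $\log H\geq m_2(\log m_2)^2 e^{m_2(\log m_2)^2}$ becomes $\log H\geq md(\log(md))^2 e^{md(\log(md))^2}$, which is exactly the standing assumption of the corollary.

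Next, I would simply quote the conclusion of the sparse polynomial theorem with these parameters. The bound
\[
|P(e)|>H^{-m_1-\frac{\rho(m_1^2+3m_1+2)\log m_2}{\log\log H}}
\]
translates directly into
\[
|P(e)|>H^{-m-\frac{\rho(m^2+3m+2)\log(md)}{\log\log H}},
\]
which is the asserted inequality with $\omega(m,H)<m+\frac{\rho(m^2+3m+2)\log(dm)}{\log\log H}$. The bounds on $\rho$ (namely $\rho\leq 12.88$ in general and $\rho\leq 2$ when $m_2=md\geq 11$) carry over verbatim from the previous theorem, because they depend only on $m_2$ and on the ratio $u/v$, both of which are determined purely by the degree.

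There is essentially no obstacle here: the corollary is a one-line specialization of the previous theorem. The only minor points to check are the translation of hypotheses (done above) and the observation that the set of exponents $\{0,d,2d,\ldots,md\}$ is a valid set of pairwise distinct non-negative integers with $\beta_0=0$, which is required by the setup leading to the sparse polynomial theorem. All bounds on the auxiliary integrals $B^*_{k,0}(1)$ and $L^*_{k,j}(1)$ established in the previous proof depend only on $m_1$ and $m_2$, so no recomputation is needed.
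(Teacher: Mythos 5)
Your proposal is correct and takes exactly the same route as the paper: identify $m_1=m$ and $m_2=md$, check the hypotheses, and quote the sparse polynomial theorem. The paper's own proof is the one-line "Notice that now $m_1=m$ and $m_2=dm$. Substituting these values into the previous theorem immediately yields the result," so your more careful verification of the hypotheses (including the small edge case when $m=1$ forcing $d\ge 4$) only adds detail the paper leaves implicit.
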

\begin{proof}
Notice that now $m_1=m$ and $m_2=dm$. Substituting these values into the previous theorem immediately yields the result.
\end{proof}

\section*{Acknowledgements}
We are indebted to the anonymous referees for their critical reading and helpful suggestions.

%VIITTEET


\begin{thebibliography}{0}

\bibitem{stirling} M. Abramowitz, I. A. Stegun: Handbook of mathematical functions with formulas, graphs, and mathematical tables, in: National Bureau Of Standards Applied Mathematics Series, vol. 55, Washington, D.C, 1964.

\bibitem{Baker} A. Baker: Transcendental Number Theory, Cambridge University Press, Cambridge, 1975.

\bibitem{borel} \'E. Borel: Sur la nature arithm\'etique du nombre $e$, \textit{C. R. Acad. Sci. Paris}, 128 (1899), 596--599.

\bibitem{FELDMAN} N. I. Fel'dman, Yu. V. Nesterenko: Transcendental Numbers, Number Theory IV, 1--345, in: Encyclopaedia Math. Sci., vol. 44, Springer, Berlin, 1998.

\bibitem{HANCLETAL} J. Han\v{c}l, M. Leinonen, K. Lepp\"al\"a and T. Matala-aho: Explicit irrationality measures for continued fractions,  \textit{J. Number Theory} 132 (2012), 1758--1769.

\bibitem{HATA} M. Hata: Remarks on Mahler's Transcendence Measure for $e$, \textit{J. Number Theory} 54 (1995), 81--92.

\bibitem{HERMITE} Ch. Hermite: Sur la fonction exponentielle, \emph{C. R. Acad. Sci.} 77  (1873), 18--24, 74--79, 226--233, 285--293.

\bibitem{KS} D. S. Khassa, S. Srinivasan: A transcendence measure for $e$, \textit{J. Indian Math. Soc.} 56 (1991), 145--152.

\bibitem{KTT} K. Lepp\"al\"a, T. Matala-aho, T. T\"orm\"a: Rational approximations of the exponential function at rational points, \emph{J. Number Theory} 179 (2017), 220--239.

\bibitem{mahler32} K. Mahler: Zur Approximation der Exponentialfunktion und des Logarithmus. Teil I, \textit{J. Reine Angew. Math.} 166 (1931), 118--136.

\bibitem{MAHLER2}  K. Mahler: Lectures on Transcendental numbers, Springer, Berlin, 1976.

\bibitem{popken1} J. Popken: Sur la nature arithm\'etique du nombre $e$, \textit{C. R. Acad. Sci. Paris}, 186 (1928), 1505--1507.

\bibitem{popken2} J. Popken: Zur Transzendenz von $e$, \textit{Math. Z.} 29 (1929), 525--541.

\bibitem{Sage} \emph{{S}ageMath, the {S}age {M}athematics {S}oftware {S}ystem ({V}ersion 6.5)}, The Sage Developers, 2015, \url{http://www.sagemath.org}.

\bibitem{sloane} N. J. A. Sloane: The On-Line Encyclopedia of Integer Sequences, \url{https://oeis.org/}. Accessed 4 Jan 2017.

\bibitem{WALDSCHMIDT} M. Waldschmidt: Introduction To Diophantine methods: irrationality and transcendence, 2014, \url{https://webusers.imj-prg.fr/~michel.waldschmidt/articles/pdf/IntroductionDiophantineMethods.pdf}. Accessed 11 Jan 2017.

\end{thebibliography}
\end{document}